\date{}
\title[Stabilization and approximate null-controllability for diffusive equations]{Stabilization and approximate null-controllability for a large class of diffusive equations from thick control supports}
\author{Paul Alphonse}
\address{(Paul Alphonse) Université de Lyon, ENSL, UMPA - UMR 5669, F-69364 Lyon}
\email{paul.alphonse@ens-lyon.fr}
\author{Jérémy Martin}
\address{(Jérémy Martin) Univ Rennes, CNRS, IRMAR - UMR 6625, F-35000 Rennes}
\email{jeremy.martin@ens-rennes.fr}
\keywords{Stabilization, Approximate null-controllability, Thick sets, Quasi-analytic sequences, Diffusive equations}
\subjclass[2020]{93D15, 93B05, 35R11, 26E10}
\numberwithin{equation}{section}
\newtheorem{thm}{Theorem}[section]
\newtheorem{prop}[thm]{Proposition}
\newtheorem{lem}[thm]{Lemma}
\newtheorem{cor}[thm]{Corollary}
\theoremstyle{definition}
\newtheorem{dfn}[thm]{Definition}
\newtheorem{ex}[thm]{Example}
\newtheorem{rk}[thm]{Remark}
\DeclareMathOperator{\Leb}{Leb}
\DeclareMathOperator{\Supp}{Supp}
\DeclareMathOperator{\Reelle}{Re}
\begin{document}

\sloppy

\selectlanguage{english}

\begin{abstract} We prove that the thickness property is a necessary and sufficient geometric condition that ensures the (rapid) stabilization or the approximate null-controllability with uniform cost of a large class of evolution equations posed on the whole space $\mathbb R^n$. These equations are associated with operators of the form $F(\vert D_x\vert)$, the function $F:[0,+\infty)\rightarrow\mathbb R$ being continuous and bounded from below. We also provide explicit feedbacks and constants associated with these stabilization properties. The notion of thickness is known to be a necessary and sufficient condition for the exact null-controllability of the fractional heat equations associated with the functions $F(t) = t^{2s}$ in the case $s>1/2$. Our results apply in particular for this class of equations, but also for the half heat equation associated with the function $F(t) = t$, which is the most diffusive fractional heat equation for which exact null-controllability is known to fail from general thick control supports. 
\end{abstract}

\maketitle

\section{Introduction}

This paper is devoted to investigate the stabilization and approximate null-controllability for control systems of the following form:
\begin{equation}\label{01062020E1}\tag{$E_F$}
\left\{\begin{aligned}
	& \partial_tf(t,x) + F(\vert D_x\vert) f(t,x) = h(t,x)\mathbbm 1_{\omega}(x),\quad t>0,\ x\in\mathbb R^n, \\
	& f(0,\cdot) = f_0\in L^2(\mathbb R^n),
\end{aligned}\right.
\end{equation}
where the operator $F(\vert D_x\vert)$ is the Fourier multiplier associated with the symbol $F(\vert\xi\vert)$, with $\vert\cdot\vert$ the canonical Euclidean norm in $\mathbb R^n$, the function $F:[0,+\infty)\rightarrow\mathbb R$ being continuous and bounded from below, and $\omega\subset\mathbb R^n$ is a Borel set with positive Lebesgue measure. 

The study of the (rapid) stabilization and the (approximate) null-controllability of evolution equations of the form \eqref{01062020E1} has been much addressed recently \cite{A, AB, HWW, K, L, LWXY, MR2679651, MR4041279}. The Schr\"odinger counterparts of these equations and the same equations posed on bounded domains have also been studied, respectively in \cite{MPS2} and \cite{MR1312710, Xiang}. In this work, we consider control supports $\omega\subset\mathbb R^n$ which are thick:

\begin{dfn} Given $\gamma\in(0,1)$ and $L>0$, the set $\omega\subset\mathbb R^n$ is said to be $\gamma$-\textit{thick} at scale $L$ when it is measurable and satisfies
$$\forall x\in\mathbb R^n,\quad\Leb(\omega\cap(x + [0,L]^n))\geq\gamma L^n,$$
where $\Leb$ denotes the Lebesgue measure in $\mathbb R^n$.
\end{dfn}

This notion of thickness has appeared to play a key role in the null-controllability theory since the works \cite{MR3816981, WZ}, where the authors established that it is a necessary and sufficient geometric condition that ensures the null-controllability of the heat equation posed on $\mathbb R^n$, which is the equation \eqref{01062020E1} associated with the function $F(t) = t^2$. The same phenomenon holds true more generally for the evolution equations associated with fractional Laplacians $(-\Delta)^s$ (case where $F(t) = t^{2s}$) in the same setting and when $s>1/2$, as proven in \cite{AB}, and also quite surprisingly for the Schr\"odinger counterparts of these equations in the one dimensional setting and when $s\geq1/2$, see \cite{MPS2} (Corollary 2.8). It is also known from the works \cite{Ko, K} that in the case $0<s\le1/2$, the fractional heat equation (\hyperref[01062020E1]{$E_{t^{2s}}$}) is not anymore null-controllable from thick control supports in general. Other classes of degenerate parabolic equations of hypoelliptic type, as evolution equations associated with accretive quadratic operators or (non-autonomous) Ornstein-Uhlenbeck operators, were also proven to be null-controllable from thick control supports, see e.g. \cite{A, BEPS, BJKPS}. In this work, we prove that a very general class of equations of the form \eqref{01062020E1} is approximately null-controllable with uniform cost from the control support $\omega\subset\mathbb R^n$ if and only if $\omega$ is a thick set. Our results hold in particular for the half heat equation (\hyperref[01062020E1]{$E_t$}) which is not null-controllable from general thick control supports (at least when $n=1$, the case $n\geq2$ remaining open for the moment), see e.g. \cite{K} (Theorem 2.3) or \cite{L} (Theorem 1.1).

The study of the (rapid) stabilization of the control system \eqref{01062020E1}, as for it, has been addressed very recently in the works \cite{HWW, LWXY, MR4041279}. It has been proven in \cite{HWW} (Theorem 1.1) that for all $s>0$, the fractional heat equation (\hyperref[01062020E1]{$E_{t^{2s}}$}) is exponentially stabilizable from the support $\omega$ if and only if $\omega$ is thick. It is also known from \cite{LWXY} (Example 1) that the very same equation (\hyperref[01062020E1]{$E_{t^{2s}}$}) is rapidly stabilizable from complements of Euclidean balls in $\mathbb R^n$ when $0<s<1$. In this paper, we establish that the control system \eqref{01062020E1} is exponentially stabilizable from $\omega$ if and only if $\omega$ is a thick set when $\inf F \le 0$ (in the case $\inf F>0$, the control system \eqref{01062020E1} is stable) and $\liminf_{+\infty} F>-\inf F$. Moreover, we provide explicit formulas for the feedbacks $K$ and the constants associated with this stabilization, which allows us to prove that when $\lim_{+\infty} F = +\infty$, the control system \eqref{01062020E1} is rapidly stabilizable from $\omega$ if and only if $\omega$ is thick. In particular, we recover \cite{HWW} (Theorem 1.1) (with new explicit feedbacks) and we generalize \cite{LWXY} (Example 1). We also prove that when $\omega$ is not dense in $\mathbb R^n$, the equation \eqref{01062020E1} is never rapidly stabilizable in the particular case where $F$ admits a finite limit at $+\infty$.

In a nutshell, our results highlight the importance of the notion of thickness not only in the null-controllability theory, but also for properties of stabilization and approximate null-controllability with uniform cost, as it turns out to be a necessary and sufficient geometric condition ensuring these two properties for a large class of diffusive equations \eqref{01062020E1}.

\subsubsection*{Outline of the work} In Section \ref{results}, we present in details the main results contained in this work. Section \ref{stab} is devoted to the proofs of the results concerning the stabilization and the rapid stabilization of the control system \eqref{01062020E1}. Basic properties of quasi-analytic sequences are presented in Section \ref{QA}, which allow to establish the results concerning the approximate null-controllability of the evolution equation \eqref{01062020E1} in Section \ref{cunc}. Finally, Section \ref{appendix} is an Appendix concerning the proof of an observability result used in Section \ref{cunc}.

\subsubsection*{Notations} The following notations and conventions will be used all over this work:
\begin{enumerate}[label=\textbf{\arabic*.},leftmargin=* ,parsep=2pt,itemsep=0pt,topsep=2pt]
	\item The canonical Euclidean scalar product of $\mathbb R^n$ is denoted by $\cdot$ and $\vert\cdot\vert$ stands for the associated canonical Euclidean norm.
	\item For all measurable subsets $\omega\subset\mathbb R^n$, the inner product of $L^2(\omega)$ is defined by
$$\langle u,v\rangle_{L^2(\omega)} = \int_{\omega}u(x)\overline{v(x)}\ \mathrm dx,\quad u,v\in L^2(\omega),$$
while $\Vert\cdot\Vert_{L^2(\omega)}$ stands for the associated norm. Moreover, $\mathcal L(L^2(\omega))$ stands for the set of bounded operators on $L^2(\omega)$.
	\item For all function $u\in\mathscr S(\mathbb R^n)$, the Fourier transform of $u$ is denoted $\widehat u$ or $\mathscr Fu$, and defined by
$$\widehat u(\xi) = (\mathscr Fu)(\xi) = \int_{\mathbb R^n}e^{-ix\cdot\xi}u(x)\ \mathrm dx,\quad \xi\in\mathbb R^n.$$
With this convention, Plancherel's theorem states that 
$$\forall u\in L^2(\mathbb R^n),\quad \Vert\widehat u\Vert_{L^2(\mathbb R^n)} = (2\pi)^{n/2}\Vert u\Vert_{L^2(\mathbb R^n)}.$$
	\item We denote by $\nabla_x$ the gradient and we set $D_x = -i\nabla_x$. Moreover, $F(\vert D_x\vert)$ stands for the Fourier multiplier associated with the symbol $F(\vert\xi\vert)$ for all continuous function $F:[0,+\infty)\rightarrow\mathbb R$.
	\item For all measurable subsets $\omega\subset\mathbb R^n$, $\mathbbm1_{\omega}$ stands for the characteristic function of $\omega$.
\end{enumerate}

%\subsubsection*{Acknowledgments} The authors are warmly grateful to K. Pravda-Starov for pointing out the reference \cite{JM} to their attention, and for a careful reading of the present work.

\section{Statement of the main results}\label{results}

This section is devoted to present in details the main results contained in this work. Let us begin by defining precisely the different concepts related to the control system \eqref{01062020E1} we are interested in:
\begin{enumerate}[label=$(\roman*)$,leftmargin=* ,parsep=2pt,itemsep=0pt,topsep=2pt]
	\item The control system \eqref{01062020E1} is said to be \textit{null-controllable} from the control support $\omega$ in time $T>0$ when for all $f_0\in L^2(\mathbb R^n)$, there exists a control $h\in L^2((0,T)\times\omega)$ such that the mild solution of \eqref{01062020E1} satisfies $f(T,\cdot) = 0$.
	\item The control system \eqref{01062020E1} is said to be \textit{approximately null-controllable} from the control support $\omega$ in time $T>0$ if for all $\varepsilon>0$ and $f_0\in L^2(\mathbb R^n)$, there exists a control $h\in L^2((0,T)\times\omega)$ such that the mild solution of \eqref{01062020E1} satisfies $\Vert f(T,\cdot)\Vert_{L^2(\mathbb R^n)}\le\varepsilon$.
	\item The control system \eqref{01062020E1} is said to be \textit{approximately null-controllable with uniform cost} from the control support $\omega$ in time $T>0$ if for all $\varepsilon>0$, there exists a positive constant $C_{\varepsilon,T}>0$ such that for all $f_0\in L^2(\mathbb R^n)$, there exists a control $h\in L^2((0,T)\times\omega)$ such that the mild solution of \eqref{01062020E1} satisfies 
	$$\Vert f(T,\cdot)\Vert_{L^2(\mathbb R^n)}\le\varepsilon\Vert f_0\Vert_{L^2(\mathbb R^n)},$$
	with moreover
	$$\int_0^T\Vert h(t,\cdot)\Vert^2_{L^2(\omega)}\ \mathrm dt\le C_{\varepsilon,T}\Vert f_0\Vert^2_{L^2(\mathbb R^n)}.$$
	\item The control system \eqref{01062020E1} is said to be \textit{exponentially stabilizable} from the control support $\omega$ at rate $\alpha>0$ if there exist a positive constant $M_{\alpha}\geq1$ and a feedback $K_{\alpha}\in\mathcal L(L^2(\mathbb R^n))$ such that for all $t\geq0$,
	\begin{equation}\label{12042021E3}
		\big\Vert e^{-t(F(\vert D_x\vert)+\mathbbm 1_{\omega}K_{\alpha})}\big\Vert_{\mathcal L(L^2(\mathbb R^n))}\le M_{\alpha}e^{-\alpha t}.
	\end{equation}
	When the feedback $K_{\alpha}$ can be chosen equal to zero, the control system \eqref{01062020E1} is said to be \textit{stable}. The existence of the semigroup generated by the operator $F(\vert D_x\vert)+\mathbbm 1_{\omega}K_{\alpha}$ is ensured by the theory of bounded perturbation of semigroups, see e.g. \cite{MR1721989} (Theorem III.1.3).
	\item The control system \eqref{01062020E1} is said to be \textit{rapidly stabilizable} from the control support $\omega$ if it is exponentially stabilizable from $\omega$ at any rate $\alpha>0$.
\end{enumerate}

\subsection{Stabilization} First of all, we are interested in tackling stabilization issues for the evolution system \eqref{01062020E1}. Let us begin by noticing that when $\inf F>0$, we get from Plancherel's theorem that
$$\forall t\geq0,\quad\big\Vert e^{-tF(\vert D_x\vert)}\big\Vert_{\mathcal L(L^2(\mathbb R^n))}\le e^{-(\inf F)t},$$
so the control system \eqref{01062020E1} is stable. The interesting case is therefore when $\inf F\le0$. In this case, we prove that the thickness of the support $\omega\subset\mathbb R^n$ is a necessary geometric condition that ensures the stabilization of the equation \eqref{01062020E1}, and a sufficient one when assuming in addition that $\liminf_{+\infty}F>\vert\inf F\vert$. We also provide explicit feedbacks and quantitative estimates associated with this stabilization.

\begin{thm}\label{11112020T1} Let $F : [0,+\infty)\rightarrow\mathbb R$ be a continuous function bounded from below  and $\omega\subset\mathbb R^n$ be a measurable set.
\begin{enumerate}[label=$(\roman*)$,leftmargin=* ,parsep=2pt,itemsep=0pt,topsep=2pt]
	\item If $\inf F\le 0$ and the evolution system \eqref{01062020E1} is exponentially stabilizable from $\omega$, then the set $\omega$ is thick.
	\item When $\liminf_{+\infty}F>\inf F$ and $\omega$ is a thick set, then there exist some positive constants $C = C(\omega)\geq1$ and $R_0 = R_0(F)>0$ such that for all $R\geq R_0$ and $t\geq0$,
\begin{equation}\label{29102020T1}
	\big\Vert e^{-t(F(\vert D_x\vert)+Ce^{CR}(\alpha_R-\inf F)\mathbbm 1_{\omega}K_R)}\big\Vert_{\mathcal L(L^2(\mathbb R^n))}\le \sqrt 2Ce^{CR}e^{-(\alpha_R+\inf F)t/2},
\end{equation}
where we set $\alpha_R = \inf_{r\geq R}F(r)$ and where $K_R$ stands for the following orthogonal projection
$$K_R : L^2(\mathbb R^n)\rightarrow\big\{f\in L^2(\mathbb R^n) : \Supp\widehat f\subset\overline{B(0,R)}\big\},$$
with $\overline{B(0,R)}$ the closed Euclidean ball centered in $0$ with radius $R>0$.
	\item When $\liminf_{+\infty}F>\vert\inf F\vert$ and $\omega$ is a thick set, then the evolution system \eqref{01062020E1} is exponentially stabilizable from $\omega$.
\end{enumerate}
\end{thm}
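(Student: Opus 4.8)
The plan is to prove the three statements separately, with the bulk of the work going into item $(ii)$, from which item $(iii)$ follows by a simple observation. For item $(i)$, I would argue by contraposition: assume $\omega$ is not thick and show that exponential stabilization fails. The key tool is a Fourier-analytic lower bound on low-frequency functions. Since $\inf F \le 0$, pick a frequency $\xi_0$ with $F(|\xi_0|)$ close to $\inf F \le 0$, and by continuity of $F$ there is a small ball of frequencies on which $F$ stays close to that value; any $L^2$ function $f$ whose Fourier support lies in that ball satisfies $\|e^{-tF(|D_x|)}f\| \gtrsim e^{-ct}\|f\|$ for arbitrarily small $c$, so the free semigroup cannot decay fast on such states. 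If $\omega$ were not thick, one could build (as in \cite{K, L} or via the standard Kovrijkine-type construction) a sequence of such band-limited functions that are almost entirely concentrated off $\omega$, i.e. $\|f_k\|_{L^2(\omega)} / \|f_k\|_{L^2(\mathbb R^n)} \to 0$. Since the feedback term $\mathbbm 1_\omega K_\alpha$ only sees the part of the state living over $\omega$, the perturbed semigroup behaves on these states almost like the free one, contradicting \eqref{12042021E3}. The delicate point here is making precise that a bounded feedback supported on a non-thick set cannot improve the decay of spectrally localized states; this is where a quantitative non-thickness construction must be invoked carefully.

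For item $(ii)$, the strategy is a Fourier-frequency splitting combined with a spectral inequality. Write $\mathbb R^n = \{|\xi| \le R\} \cup \{|\xi| > R\}$, inducing an orthogonal decomposition $f = K_R f + (I - K_R)f$. On the high-frequency part $(I-K_R)f$ the free semigroup $e^{-tF(|D_x|)}$ already contracts with rate $\alpha_R = \inf_{r \ge R} F(r)$ by Plancherel, and since $\liminf_{+\infty} F > \inf F$ we may choose $R_0$ so large that $\alpha_R > \inf F$ for all $R \ge R_0$. On the low-frequency part, the Logvinenko–Sereda / Kovrijkine spectral inequality for thick sets gives a constant of the form $C e^{CR}$ (with $C = C(\omega)$ depending only on the thickness parameters) such that
\begin{equation}\label{proofplanE1}
	\Vert g\Vert_{L^2(\mathbb R^n)}^2 \le C e^{CR}\Vert g\Vert_{L^2(\omega)}^2,\quad \Supp\widehat g\subset\overline{B(0,R)}.
\end{equation}
Taking the feedback proportional to $K_R$ with the indicated coupling constant $C e^{CR}(\alpha_R - \inf F)$, one sets up a Lyapunov / dissipation estimate: differentiating $\|f(t)\|^2$ along the perturbed flow, the free part contributes $-2\langle F(|D_x|)f,f\rangle \ge 2(\inf F)\|f\|^2$ while the damping contributes, via \eqref{proofplanE1}, an extra $-2(\alpha_R - \inf F)$ times (a multiple of) $\|K_R f\|^2$; combining the low-frequency damping with the intrinsic high-frequency decay yields an overall rate $(\alpha_R + \inf F)/2$ and a multiplicative constant $\sqrt{2}\,C e^{CR}$ after accounting for the non-self-adjoint cross terms. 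Item $(iii)$ is then immediate: when $\liminf_{+\infty} F > |\inf F| = -\inf F$ one can pick $R \ge R_0$ with $\alpha_R > -\inf F$, so $\alpha_R + \inf F > 0$, and \eqref{29102020T1} is precisely an exponential stabilization estimate of the form \eqref{12042021E3} at rate $\alpha = (\alpha_R + \inf F)/2 > 0$ with feedback $K_\alpha = C e^{CR}(\alpha_R - \inf F) K_R$.

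The main obstacle I anticipate is the quantitative bookkeeping in the Lyapunov argument for $(ii)$: because $F(|D_x|)$ is self-adjoint but $F(|D_x|) + \mathbbm 1_\omega K_R$ is not (the damping is not of the clean form $B^* B$ since $\mathbbm 1_\omega K_R$ is neither self-adjoint nor nonnegative as an operator on $L^2(\mathbb R^n)$), one cannot directly read off the decay of the semigroup norm from a real-part computation. The fix is to work with a suitably equivalent inner product — or equivalently to estimate $\|e^{-t(F(|D_x|)+\mathbbm 1_\omega K_R)}f\|$ by a Duhamel/bootstrap argument splitting into the $K_R$-range and its orthogonal complement — and to track how the spectral-inequality constant $C e^{CR}$ propagates into the final multiplicative factor $\sqrt{2}\,C e^{CR}$. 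Producing the sharp exponent in \eqref{29102020T1} (rather than some cruder rate) is the part that requires care; the rest is routine Fourier analysis plus the known spectral inequality for thick sets.
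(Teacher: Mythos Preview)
Your plan for $(ii)$ and $(iii)$ is essentially the paper's argument. The paper commits to the specific weighted Lyapunov function
\[
V(y) = \mu_R\Vert K_R y\Vert^2_{L^2(\mathbb R^n)} + \Vert(1-K_R)y\Vert^2_{L^2(\mathbb R^n)},\qquad \mu_R = 2C^2 e^{2CR},
\]
which is exactly the ``equivalent inner product'' you allude to; differentiating $V(f(t))$ rather than $\Vert f(t)\Vert^2$ is what makes the cross terms from $(1-K_R)\mathbbm 1_\omega K_R$ manageable (via Cauchy--Schwarz and Young) and produces the clean rate $(\alpha_R+\inf F)/2$ and the factor $\sqrt{2}\,C e^{CR}$. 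So your instinct about why the naive energy estimate fails and how to fix it is correct; you just have not yet written down the right $V$.

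For $(i)$, however, your direct semigroup argument has a real gap. The feedback $K_\alpha$ is an \emph{arbitrary} bounded operator on $L^2(\mathbb R^n)$: even if your band-limited test functions $f_k$ satisfy $\Vert f_k\Vert_{L^2(\omega)}/\Vert f_k\Vert_{L^2(\mathbb R^n)}\to 0$, nothing prevents $K_\alpha$ from mapping them to functions with large mass on $\omega$, so $\mathbbm 1_\omega K_\alpha f_k$ need not be small and the perturbed flow need not behave like the free one. The paper avoids this entirely by invoking the observability characterization of exponential stabilization (Theorem~\ref{29102020T2} from \cite{MR4041279}): stabilization gives $\varepsilon\in(0,1)$, $T>0$, $C>0$ with
\[
\big\Vert e^{-TF(\vert D_x\vert)}g\big\Vert^2_{L^2(\mathbb R^n)}\le C\int_0^T\big\Vert e^{-tF(\vert D_x\vert)}g\big\Vert^2_{L^2(\omega)}\,\mathrm dt + \varepsilon\Vert g\Vert^2_{L^2(\mathbb R^n)},
\]
and the feedback has disappeared. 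One then plugs in modulated Gaussians $g_{l,\xi_0}(x)=l^{-n}e^{ix\cdot\xi_0-\vert x-x_0\vert^2/(2l^2)}$ centered at an arbitrary $x_0$, with $\xi_0$ chosen so that $e^{-2TF(\vert\xi_0\vert)}>\varepsilon$ (possible since $\inf F\le 0$) and $l$ large; the left side minus $\varepsilon\Vert g\Vert^2$ stays bounded below uniformly in $x_0$, while the right side is controlled by $\Leb(\omega\cap(x_0+[-r,r]^n))$ plus a tail, forcing thickness. This route is both simpler and rigorous; your contraposition would need to be rerouted through the same observability inequality to close.
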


Let us check that the assertion $(iii)$ in Theorem \ref{11112020T1} is a straightforward consequence of the assertion $(ii)$ in the same result. Obviously, the assumption $\liminf_{+\infty}F>\vert\inf F\vert$ is read as $\liminf_{+\infty}F>\inf F$ and $\liminf_{+\infty}F>-\inf F$. On the one hand, the assumption $\liminf_{+\infty}F>\inf F$ implies that the estimates \eqref{29102020T1} hold according to the assertion $(ii)$. On the other hand, assuming that $\liminf_{+\infty}F>-\inf F$, we get that $\alpha_R+\inf F>0$ when $R\gg1$ is large enough. The estimates \eqref{29102020T1} then imply that the evolution equation \eqref{01062020E1} is exponentially stabilizable, according to the definition \eqref{12042021E3} presented in the beginning of this section.
%In particular, let us explain why the assumption $\liminf_{+\infty}F>\vert\inf F\vert$ is made in this statement.

By gathering the results stated in the assertions $(i)$ and $(iii)$ in Theorem \ref{11112020T1}, we directly obtain the following corollary:

\begin{cor}\label{08012021E1} Let $F : [0,+\infty)\rightarrow\mathbb R$ be a continuous function bounded from below  satisfying $\inf F\le 0$ and $\liminf_{+\infty}F>-\inf F$, and $\omega\subset\mathbb R^n$ be a measurable set. The evolution system \eqref{01062020E1} is exponentially stabilizable from $\omega$ if and only if $\omega$ is thick.
\end{cor}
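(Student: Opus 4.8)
The plan is to deduce the equivalence directly from Theorem \ref{11112020T1}, treating the two implications separately. For the forward implication, suppose that the evolution system \eqref{01062020E1} is exponentially stabilizable from $\omega$. Since the standing assumption includes $\inf F\le0$, the assertion $(i)$ of Theorem \ref{11112020T1} applies verbatim and yields that $\omega$ is thick. No additional work is needed here beyond quoting that statement.

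For the reverse implication, assume that $\omega$ is thick. The only point to check is that the hypothesis $\liminf_{+\infty}F>-\inf F$, combined with $\inf F\le0$, is equivalent to the hypothesis $\liminf_{+\infty}F>\vert\inf F\vert$ required by the assertion $(iii)$ of Theorem \ref{11112020T1}. This is immediate since $\inf F\le0$ forces $\vert\inf F\vert=-\inf F$. Hence the assertion $(iii)$ applies and gives that \eqref{01062020E1} is exponentially stabilizable from $\omega$, which concludes the proof. There is no genuine obstacle in this argument: all the analytic content (construction of the explicit feedbacks, the spectral truncation argument, the quantitative bounds \eqref{29102020T1}, and the necessity of thickness) is already packaged in Theorem \ref{11112020T1}, and the corollary is merely a bookkeeping step reconciling the phrasing of the hypotheses.
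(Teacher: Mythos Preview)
Your proof is correct and matches the paper's own argument: the corollary is stated immediately after Theorem \ref{11112020T1} with the single justification ``By gathering the results stated in the assertions $(i)$ and $(iii)$ in Theorem \ref{11112020T1}, we directly obtain the following corollary,'' and your observation that $\inf F\le 0$ makes $\vert\inf F\vert=-\inf F$ is exactly the bookkeeping the paper relies on implicitly.
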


%\begin{thm}\label{11112020T1} Let $F : [0,+\infty)\rightarrow\mathbb R$ be a bounded below continuous function satisfying $\inf F\le0$ and $\liminf_{+\infty}F >0$, and $\omega\subset\mathbb R^n$ be a measurable set. The following assertions are equivalent:
%\begin{enumerate}[label=$(\roman*)$,leftmargin=* ,parsep=2pt,itemsep=0pt,topsep=2pt]
%	\item The evolution system \eqref{01062020E1} is exponentially stabilizable from $\omega$.
%	\item The set $\omega$ is thick.
%\end{enumerate}
%More precisely, when $\omega$ is a thick set, there exist some positive constants $C = C(\omega)\geq1$ and $R_0 = R_0(F)>0$ such that for all $R\geq R_0$ and $t\geq0$,
%\begin{equation}\label{29102020T1}
%	\big\Vert e^{-t(F(\vert D_x\vert)+Ce^{CR}\alpha_R\mathbbm 1_{\omega}K_R)}\big\Vert_{\mathcal L(L^2(\mathbb R^n))}\le Ce^{CR}e^{-\alpha_Rt/2},
%\end{equation}
%where we set $\alpha_R = \inf_{r\geq R}F(r)$ and where $K_R$ stands for the following orthogonal projection
%$$K_R : L^2(\mathbb R^n)\rightarrow\big\{f\in L^2(\mathbb R^n) : \Supp\widehat f\subset B(0,R)\big\}.$$
%\end{thm}

%In fact, the assumption $\inf F = 0$ in Theorem \ref{11112020T1} is only used to prove that when the evolution system \eqref{01062020E1} is stabilizable from the support $\omega\subset\mathbb R^n$, then $\omega$ is a thick set. This means that the quantitative stabilization estimates \eqref{29102020T1} hold for any thick set $\omega$ and any bounded below continuous function $F : [0,+\infty)\rightarrow\mathbb R$ (even when $\inf F>0$). 

It is a very interesting issue to know whether the control system \eqref{01062020E1} is exponentially stabilizable when $\inf F\le 0$ and $\liminf_{+\infty} F\le-\inf F$. We shall not tackle such a question in this work.

As a consequence of the quantitative stabilization estimates \eqref{29102020T1}, we directly obtain the following result concerning the rapid stabilization of the evolution system \eqref{01062020E1} under the assumption $\lim_{+\infty} F = +\infty$, by applying Theorem \ref{11112020T1} to the function $F-\inf F$.

\begin{cor}\label{complete_stabilization} Let $F : [0,+\infty)\rightarrow\mathbb R$ be a continuous function bounded from below satisfying $\lim_{+\infty} F = +\infty$, and $\omega\subset\mathbb R^n$ be a measurable set. The evolution system \eqref{01062020E1} is rapidly stabilizable from $\omega$ if and only if $\omega$ is thick.
\end{cor}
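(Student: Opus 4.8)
The plan is to deduce the corollary from Theorem~\ref{11112020T1} applied to the shifted symbol $G := F - \inf F$, as suggested just before the statement; this shift is legitimate since $F$ is bounded from below. First I would record the elementary reduction. Writing $c := \inf F \in \mathbb R$, the scalar operator $c\,\mathrm{Id}$ commutes with $F(\vert D_x\vert)$ and with $\mathbbm 1_\omega K$ for every $K \in \mathcal L(L^2(\mathbb R^n))$, so that
$$e^{-t(F(\vert D_x\vert) + \mathbbm 1_\omega K)} = e^{-ct}\, e^{-t(G(\vert D_x\vert) + \mathbbm 1_\omega K)}, \qquad t \ge 0.$$
Hence \eqref{01062020E1} is exponentially stabilizable from $\omega$ at rate $\alpha > 0$ with feedback $K$ if and only if the analogous control system with symbol $G$ is exponentially stabilizable from $\omega$ at rate $\alpha + c$ with the same feedback. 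Since rapid stabilizability is nothing but exponential stabilizability at arbitrarily large rates — a property insensitive to the additive constant $c$ — it follows that \eqref{01062020E1} is rapidly stabilizable from $\omega$ if and only if the control system with symbol $G$ is. Finally, $G$ is continuous, bounded from below, with $\inf G = 0$ and $\lim_{+\infty} G = +\infty$; in particular $\liminf_{+\infty} G = +\infty > 0 = \vert \inf G \vert$, so Theorem~\ref{11112020T1} applies to $G$ in all three of its assertions.

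For the necessity, I would argue that if \eqref{01062020E1} is rapidly stabilizable from $\omega$, then so is the control system with symbol $G$, hence in particular the latter is exponentially stabilizable from $\omega$; since $\inf G = 0 \le 0$, assertion $(i)$ of Theorem~\ref{11112020T1} applied to $G$ forces $\omega$ to be thick.

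For the sufficiency, assuming $\omega$ thick, I would invoke assertion $(ii)$ of Theorem~\ref{11112020T1} for $G$: there exist $C = C(\omega) \ge 1$ and $R_0 = R_0(G) > 0$ such that, setting $\alpha_R := \inf_{r \ge R} G(r)$ and using $\inf G = 0$, for all $R \ge R_0$ and $t \ge 0$,
$$\big\Vert e^{-t(G(\vert D_x\vert) + Ce^{CR}\alpha_R \mathbbm 1_\omega K_R)} \big\Vert_{\mathcal L(L^2(\mathbb R^n))} \le \sqrt 2\, C e^{CR}\, e^{-\alpha_R t/2},$$
where $K_R$ denotes the orthogonal projection onto the functions with Fourier support in $\overline{B(0,R)}$. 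Because $\lim_{+\infty} G = +\infty$, the non-decreasing quantity $\alpha_R$ tends to $+\infty$ as $R \to +\infty$. Hence, given any prescribed rate $\alpha > 0$, choosing $R \ge R_0$ with $\alpha_R \ge 2\alpha$ produces, via the displayed estimate, an explicit feedback $K_\alpha := C e^{CR}\alpha_R K_R \in \mathcal L(L^2(\mathbb R^n))$ and an explicit constant $M_\alpha := \sqrt 2\, C e^{CR} \ge 1$ realizing exponential stabilization of the control system with symbol $G$ from $\omega$ at rate $\alpha$ (indeed $e^{-\alpha_R t/2} \le e^{-\alpha t}$ since $\alpha_R \ge 2\alpha$). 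Since $\alpha > 0$ was arbitrary, that control system, and therefore \eqref{01062020E1}, is rapidly stabilizable from $\omega$.

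I do not expect a genuine obstacle: the whole content already lies in Theorem~\ref{11112020T1}. The only points requiring a little care are the scalar-shift reduction to the normalization $\inf F = 0$ — which is exactly what makes the hypothesis $\liminf_{+\infty} F > \vert \inf F \vert$ of assertion $(iii)$ automatic here — and the observation that $\alpha_R \to +\infty$, which is the precise mechanism upgrading exponential stabilization at each finite rate into stabilization at arbitrarily large rates.
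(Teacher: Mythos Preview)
Your proof is correct and follows exactly the route the paper indicates: apply Theorem~\ref{11112020T1} to $G = F - \inf F$, using assertion $(i)$ for necessity (since $\inf G = 0$) and assertion $(ii)$ together with $\alpha_R \to +\infty$ for sufficiency. One small slip: stabilization of the $F$-system at rate $\alpha$ corresponds to stabilization of the $G$-system at rate $\alpha - c$, not $\alpha + c$ (since $e^{-t(F(\vert D_x\vert)+\mathbbm 1_\omega K)} = e^{-ct}e^{-t(G(\vert D_x\vert)+\mathbbm 1_\omega K)}$), but this is harmless here because only arbitrarily large rates are at stake.
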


\begin{ex} For all positive real numbers $s>0$, let us consider the function $F_s:[0,+\infty)\rightarrow[0,+\infty)$ defined for all $t\geq0$ by $F_s(t) = t^{2s}$. We also consider $\omega\subset\mathbb R^n$ a measurable set with positive Lebesgue measure. It follows from Corollaries \ref{08012021E1} and \ref{complete_stabilization} that the associated control system (\hyperref[01062020E1]{$E_{F_s}$}) is exponentially stabilizable from the control support $\omega$ if and only if it is rapidly stabilizable from $\omega$ if and only if $\omega$ is a thick set. Moreover, we deduce from Theorem \ref{11112020T1} that when $\omega$ is thick, there exist a positive constant $C\geq1$ and $R_0>0$ such that for all $R\geq R_0$ and $t\geq0$,
$$\big\Vert e^{-t((-\Delta)^s+Ce^{CR}R^{2s}\mathbbm 1_{\omega}K_R)}\big\Vert_{\mathcal L(L^2(\mathbb R^n))}\le\sqrt 2 Ce^{CR}e^{-R^{2s}t/2},$$
where $K_R$ stands for the following orthogonal projection
$$K_R : L^2(\mathbb R^n)\rightarrow\big\{f\in L^2(\mathbb R^n) : \Supp\widehat f\subset\overline{B(0,R)}\big\}.$$
These explicit stabilization estimates allow to recover \cite{HWW} (Theorem 1.1) and also to generalize \cite{LWXY} (Example 1).
\end{ex}

In the case where $\lim_{+\infty} F<+\infty$, we only provide a necessary condition for the control system \eqref{01062020E1} to be rapidly stabilizable. The following result implies in particular that when the function $F$ has a finite limit in $+\infty$ and when the support $\omega\subset\mathbb R^n$ is not dense in $\mathbb R^n$, then the equation \eqref{01062020E1} is not rapidly stabilizable from $\omega$. 

\begin{prop}\label{negativ_result} Let $F : [0,+\infty)\rightarrow\mathbb R$ be a continuous function bounded from below and $\omega\subset\mathbb R^n$ be a measurable set which is not dense in $\mathbb R^n$. We assume that $\lim_{+\infty}F$ exists and is a non-negative real number (the function $F$ is therefore bounded). Then, if $\alpha>\sup F$, the equation \eqref{01062020E1} is not exponentially stabilizable from $\omega$ at rate $\alpha$.
\end{prop}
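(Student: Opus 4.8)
The plan is to argue by contradiction and to turn the assumed exponential stabilization at rate $\alpha$ into an observability-type estimate for the \emph{uncontrolled} semigroup $(e^{-tF(\vert D_x\vert)})_{t\ge0}$, whose right-hand side only involves the observation over $\omega$; this estimate will then be contradicted by testing it against a family of high-frequency wave packets whose supports lie inside the nonempty open set $\mathbb R^n\setminus\overline{\omega}$.

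First I would observe that, since $F$ is continuous on $[0,+\infty)$ and admits a finite limit $\ell:=\lim_{+\infty}F\ge0$, it is bounded, so that $F(\vert D_x\vert)$ is a bounded self-adjoint operator on $L^2(\mathbb R^n)$ and no domain issue arises. Assuming that \eqref{01062020E1} is exponentially stabilizable from $\omega$ at rate $\alpha$, there are $M\ge1$ and $K\in\mathcal L(L^2(\mathbb R^n))$ with $\Vert e^{-t(F(\vert D_x\vert)+\mathbbm 1_\omega K)}\Vert_{\mathcal L(L^2(\mathbb R^n))}\le Me^{-\alpha t}$ for all $t\ge0$. For every $f_0\in L^2(\mathbb R^n)$, the closed-loop trajectory $f(t)=e^{-t(F(\vert D_x\vert)+\mathbbm 1_\omega K)}f_0$ solves \eqref{01062020E1} with the feedback control $h(t)=Kf(t)$, and satisfies $\Vert f(t)\Vert_{L^2(\mathbb R^n)}\le Me^{-\alpha t}\Vert f_0\Vert_{L^2(\mathbb R^n)}$ together with $\Vert\mathbbm 1_\omega h(t)\Vert_{L^2(\mathbb R^n)}\le\Vert K\Vert Me^{-\alpha t}\Vert f_0\Vert_{L^2(\mathbb R^n)}$ for all $t\ge0$. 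Fixing $T>0$ and $g_T\in L^2(\mathbb R^n)$ and pairing this trajectory with the backward free solution $g(t)=e^{-(T-t)F(\vert D_x\vert)}g_T$, I would integrate over $[0,T]$ the identity $\frac{\mathrm d}{\mathrm dt}\langle f(t),g(t)\rangle_{L^2(\mathbb R^n)}=\langle\mathbbm 1_\omega h(t),\mathbbm 1_\omega g(t)\rangle_{L^2(\mathbb R^n)}$ (which uses that $F(\vert D_x\vert)$ is bounded and self-adjoint), then apply the Cauchy--Schwarz inequality and the two bounds above, and finally take the supremum over $\Vert f_0\Vert_{L^2(\mathbb R^n)}=1$. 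This should yield, for all $T>0$ and $g_T\in L^2(\mathbb R^n)$,
\begin{equation*}
	\big\Vert e^{-TF(\vert D_x\vert)}g_T\big\Vert_{L^2(\mathbb R^n)}\le Me^{-\alpha T}\Vert g_T\Vert_{L^2(\mathbb R^n)}+\frac{\Vert K\Vert M}{\sqrt{2\alpha}}\bigg(\int_0^T\big\Vert\mathbbm 1_\omega e^{-sF(\vert D_x\vert)}g_T\big\Vert^2_{L^2(\mathbb R^n)}\,\mathrm ds\bigg)^{1/2}.
\end{equation*}

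Next I would construct the test functions. Since $\omega$ is not dense in $\mathbb R^n$, the set $\mathbb R^n\setminus\overline{\omega}$ is open and nonempty, hence contains a ball $B(x_0,r)$, and $\Leb(\omega\cap B(x_0,r))=0$. Fixing $\chi\in C_c^\infty(\mathbb R^n)$ supported in $B(0,1)$ with $\Vert\chi\Vert_{L^2(\mathbb R^n)}=1$ and a unit vector $\eta\in\mathbb R^n$, I would set
\begin{equation*}
	u_j(x)=(j/r)^{n/2}\,e^{ij\eta\cdot x}\,\chi\big((j/r)(x-x_0)\big),\qquad x\in\mathbb R^n,\ j\ge1.
\end{equation*}
Then $\Vert u_j\Vert_{L^2(\mathbb R^n)}=1$ and $\Supp u_j\subset\overline{B(x_0,r/j)}$, so $\mathbbm 1_\omega u_j=0$ in $L^2(\mathbb R^n)$; moreover $\widehat{u_j}$ concentrates near the frequency $j\eta$, and a change of variables in the Fourier integral gives $\Vert\mathbbm 1_{\{\vert D_x\vert\le R\}}u_j\Vert_{L^2(\mathbb R^n)}\to0$ as $j\to+\infty$, for every fixed $R>0$. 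Splitting the Fourier integral at $\vert\xi\vert=R$ and using that $F(\vert\xi\vert)\to\ell$ as $\vert\xi\vert\to+\infty$, I would deduce that $\varepsilon_j:=\sup_{0\le s\le T}\Vert e^{-sF(\vert D_x\vert)}u_j-e^{-s\ell}u_j\Vert_{L^2(\mathbb R^n)}\to0$ as $j\to+\infty$. Since $\mathbbm 1_\omega u_j=0$, this gives $\Vert e^{-TF(\vert D_x\vert)}u_j\Vert_{L^2(\mathbb R^n)}\ge e^{-T\ell}-\varepsilon_j$, while for each $s\in[0,T]$ one has $\Vert\mathbbm 1_\omega e^{-sF(\vert D_x\vert)}u_j\Vert_{L^2(\mathbb R^n)}=\Vert\mathbbm 1_\omega(e^{-sF(\vert D_x\vert)}u_j-e^{-s\ell}u_j)\Vert_{L^2(\mathbb R^n)}\le\varepsilon_j$, so that $\int_0^T\Vert\mathbbm 1_\omega e^{-sF(\vert D_x\vert)}u_j\Vert^2_{L^2(\mathbb R^n)}\,\mathrm ds\le T\varepsilon_j^2\to0$. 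Taking $g_T=u_j$ in the estimate of the previous paragraph and letting $j\to+\infty$ with $T$ fixed gives $e^{-T\ell}\le Me^{-\alpha T}$, that is $e^{T(\alpha-\ell)}\le M$; since $\alpha>\sup F\ge\ell$, letting $T\to+\infty$ produces the desired contradiction, which shows that \eqref{01062020E1} is not exponentially stabilizable from $\omega$ at rate $\alpha$.

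The main obstacle lies in the two middle steps and their interplay: one must first extract from the stabilization hypothesis a genuine estimate on the \emph{free} semigroup, seen only through $\omega$ (the duality argument), and then exhibit data that are simultaneously invisible to $\omega$ and, over a whole time interval, evolve under $e^{-sF(\vert D_x\vert)}$ essentially like the scalar $e^{-s\ell}$. The tension is that spatial localization inside $\mathbb R^n\setminus\overline{\omega}$ competes with spectral localization near the frequencies where $F$ is close to $\ell$; this is resolved by the parabolic-type scaling of the wave packets $u_j$, for which shrinking the spatial support forces the Fourier mass to escape to high frequencies, where $F$ is close to its limit. Checking the uniformity in $s\in[0,T]$ of the approximation $e^{-sF(\vert D_x\vert)}u_j\approx e^{-s\ell}u_j$ is the one computation I would carry out carefully.
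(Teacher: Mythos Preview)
Your proof is correct and follows essentially the same route as the paper: a duality argument (the paper packages it as Proposition~\ref{18122020T2}, proved in the Appendix exactly as you do) turns the stabilization hypothesis into a weak observability estimate for the free semigroup, which is then contradicted by testing against functions that are spatially localized in a ball disjoint from $\omega$ and spectrally concentrated at high frequency, where $F$ is close to its limit $\ell$. The only cosmetic differences are that the paper uses pure dilations $g_h(x)=\psi(x/h)$ rather than modulated wave packets (your phase $e^{ij\eta\cdot x}$ is in fact unnecessary, since the shrinking support already pushes the Fourier mass to infinity), and that the paper first fixes $T_0$ large to absorb the remainder term before testing, whereas you let $j\to\infty$ first and then $T\to\infty$.
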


\subsection{Cost-uniform approximate null-controllability}\label{approximate_cont_result} In the second part of this work, we study the cost-uniform approximate null-controllability of the equations \eqref{01062020E1}. We will not address this question for general continuous functions $F$ bounded from below, but only for the ones generating a \textit{quasi-analytic sequence}. Let us precisely define this class of functions. Associated with the function $F$ is the following log-convex sequence $\mathcal M^F$ whose elements $M^F_k$, assumed to be positive real numbers, are defined by
\begin{equation}\label{16122020E1}
	0<M^F_k = \sup_{r\geq0} r^ke^{-F(r)}<+\infty,\quad k\geq0.
\end{equation}
We say that the sequence $\mathcal M^F$ is \textit{quasi-analytic} when, for all real numbers $a<b$, the associated Denjoy-Carleman class
$$\mathcal C_{\mathcal M^F}([a,b])= \big\{ f \in\mathcal C^{\infty}([a,b],\mathbb C) : \forall k\geq0,\forall x\in[a,b],\ \vert f^{(k)}(x)\vert \le M_k^F\big\},$$
is quasi-analytic, meaning that any function $f$ in this class satisfying
$$\exists x_0\in[a,b],\forall k\geq0,\quad f^{(k)}(x_0) = 0,$$
is identically equal to zero. We refer to Section \ref{QA} where the notion of quasi-analytic sequence is discussed, and where Denjoy-Carleman's theorem is presented, giving a useful characterization of such sequences. 

When the function $F$ generates a quasi-analytic sequence $\mathcal M^F$ of positive real numbers, the solutions of the homogeneous counterpart of the equation \eqref{01062020E1} belong to quasi-analytic classes of functions (see Subsection~\ref{approx_cont_proof} for more details). By taking advantage of this quasi-analytic regularity, we prove that the notion of thickness is a necessary and sufficient geometric condition that ensures the cost-uniform approximate null-controllability of the evolution equations \eqref{01062020E1} in any positive time.

\begin{thm}\label{21102020T1} Let $F : [0,+\infty) \rightarrow \mathbb R$ be a continuous function bounded from below and $\omega\subset\mathbb R^n$ be a measurable set. We assume that the sequence $\mathcal M^F$ associated with the function $F$ defined in \eqref{16122020E1} is a quasi-analytic sequence of positive real numbers. Then, for all positive time $T>0$, the diffusive equation \eqref{01062020E1} is cost-uniformly approximately null-controllable from the control support $\omega$ in time $T$ if and only if $\omega$ is thick.
\end{thm}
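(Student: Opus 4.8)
The plan is to establish the two implications separately by a duality argument, reducing cost-uniform approximate null-controllability to a suitable (weak) observability inequality for the adjoint homogeneous system.

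First, the necessity of thickness. This is the easier direction and I would handle it first. If the equation \eqref{01062020E1} is cost-uniformly approximately null-controllable from $\omega$ in time $T$, then the duality between control and observation shows that for every $\varepsilon>0$ there is $C_{\varepsilon,T}>0$ with
\begin{equation*}
\forall g\in L^2(\mathbb R^n),\quad \big\Vert e^{-TF(\vert D_x\vert)}g\big\Vert_{L^2(\mathbb R^n)}^2 \le C_{\varepsilon,T}\int_0^T\big\Vert \mathbbm 1_\omega e^{-tF(\vert D_x\vert)}g\big\Vert_{L^2(\mathbb R^n)}^2\,\mathrm dt + \varepsilon^2\Vert g\Vert_{L^2(\mathbb R^n)}^2.
\end{equation*}
Testing this against wave packets $g$ whose Fourier transforms concentrate near a frequency $\xi_0$ with $F(\vert\xi_0\vert)$ small (such frequencies exist since $F$ is bounded below and continuous, so $e^{-F}$ attains values close to its supremum), and then translating spatially, forces $\omega$ to capture a uniform proportion of mass in every unit cube; letting $\varepsilon\to0$ and optimizing the frequency localization yields a lower bound of the form $\Leb(\omega\cap(x+[0,L]^n))\ge\gamma L^n$ for suitable $\gamma,L$. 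This is a standard argument in the spirit of \cite{Ko,K,L}, and no quasi-analyticity is needed here.

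Second, the sufficiency of thickness, which is the heart of the matter. By duality, it suffices to prove a \emph{weak observability estimate}: for every $\varepsilon>0$ there is $C_{\varepsilon,T}>0$ such that the displayed inequality above holds for all $g\in L^2(\mathbb R^n)$. I would split the frequency space: fix $R>0$ large, write $g = g_{\le R} + g_{>R}$ according to whether $\Supp\widehat g\subset\overline{B(0,R)}$ or is supported outside. For the low-frequency part $g_{\le R}$, the function $x\mapsto (e^{-tF(\vert D_x\vert)}g_{\le R})(x)$ is the restriction of an entire function of exponential type $R$, but more is true: by \eqref{16122020E1} its derivatives are controlled by $e^{t\cdot 0}M_k^F$ up to the band-limiting, so it lies in a Denjoy–Carleman class $\mathcal C_{\mathcal M^F}$ along each coordinate line (this is exactly the "quasi-analytic regularity" alluded to in Subsection~\ref{approx_cont_proof}). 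Quasi-analyticity of $\mathcal M^F$ then upgrades thickness of $\omega$ into a genuine \emph{spectral inequality / Logvinenko–Sereda-type estimate}: there is a constant $\kappa = \kappa(\omega,R,T)$ with
\begin{equation*}
\big\Vert e^{-tF(\vert D_x\vert)}g_{\le R}\big\Vert_{L^2(\mathbb R^n)}^2 \le \kappa \big\Vert \mathbbm 1_\omega e^{-tF(\vert D_x\vert)}g_{\le R}\big\Vert_{L^2(\mathbb R^n)}^2,
\end{equation*}
uniformly in $t\in[0,T]$ (here I would invoke, or reprove in the Appendix as the excerpt promises, the relevant observability result for functions in quasi-analytic classes with thick "observation" sets — a quantitative unique-continuation estimate). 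For the high-frequency part $g_{>R}$, one simply uses that $e^{-tF(\vert D_x\vert)}$ acts on it as a multiplier with symbol bounded by $e^{-t\,\alpha_R}$ where $\alpha_R = \inf_{r\ge R}F(r)$; since $F$ is only bounded below, $\alpha_R$ need not tend to $+\infty$, but the point is different: at time $T$, the contribution $\Vert e^{-TF(\vert D_x\vert)}g_{>R}\Vert$ is \emph{not} necessarily small, so instead one must also exploit that along $[0,T]$ the semigroup on the high-frequency band is comparable to the identity up to the factor $e^{T\vert\inf F\vert}$, and combine with a Nazarov-type / Amrein–Berthier quantitative estimate on $\omega$ to absorb $g_{>R}$ as well — alternatively, and more cleanly, one observes that approximate (not exact) controllability only requires handling $g_{>R}$ up to an $\varepsilon$-error, and since $e^{-TF(\vert D_x\vert)}$ is bounded, choosing $R$ large does not by itself suffice; rather one writes $\Vert e^{-TF}g\Vert^2 \le 2\Vert e^{-TF}g_{\le R}\Vert^2 + 2\Vert e^{-TF}g_{>R}\Vert^2$ and controls the first term by observability on $\omega$ and the second by the high-frequency Logvinenko–Sereda inequality for band-\emph{exterior} functions, which again holds because $\omega$ is thick. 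Summing the two contributions and integrating in $t$ over $[0,T]$ produces the weak observability inequality with an explicit constant $C_{\varepsilon,T}$, provided $R=R(\varepsilon)$ is chosen appropriately; the $\varepsilon$-term arises from the part of the estimate that cannot be absorbed for fixed $R$.

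The main obstacle I anticipate is the quasi-analytic observability step for the low-frequency component: one needs a \emph{uniform-in-$t$} and \emph{uniform-in-dimension} quantitative unique-continuation estimate for functions whose coordinate traces lie in the Denjoy–Carleman class $\mathcal C_{\mathcal M^F}$, observed from a thick set, with a constant that is controlled well enough (say $e^{CR\log R}$ or similar) to make the final balancing of low- and high-frequency contributions go through as $R\to\infty$. This is precisely the observability result the excerpt defers to the Appendix; its proof would proceed by a Turán–Nazarov / Remez-type inequality for quasi-analytic classes on intervals (using Denjoy–Carleman's theorem for the characterization of such classes), iterated over the unit cubes tiling $\mathbb R^n$ via the thickness condition, in the spirit of the Kovrijkine-type argument underlying \cite{MR3816981,WZ,AB}. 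Verifying that the resulting constant degrades only mildly in $R$, and that the time-dependence through $e^{-tF(\vert D_x\vert)}$ does not spoil the quasi-analytic bounds uniformly on $[0,T]$, is where the real work lies.
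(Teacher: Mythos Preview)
Your necessity argument is essentially correct and matches what the paper does under the hood (the paper routes through ``cost-uniform approximate null-controllability $\Rightarrow$ rapid stabilization $\Rightarrow$ thickness'', via Proposition~\ref{06012021P1} and Corollary~\ref{complete_stabilization}, but the latter ultimately rests on the same Gaussian wave-packet test you sketch). One correction: finiteness of $M_1^F$ already forces $F(r)\ge\log r-\log M_1^F$, so $\lim_{+\infty}F=+\infty$ and $\alpha_R\to+\infty$; your remark that ``$\alpha_R$ need not tend to $+\infty$'' is mistaken under the hypotheses.

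The sufficiency argument, however, has a genuine gap, and it is not the one you flag at the end. Your plan is a frequency splitting $g=g_{\le R}+g_{>R}$, a spectral inequality $\Vert e^{-tF}g_{\le R}\Vert^2\le\kappa(R)\Vert e^{-tF}g_{\le R}\Vert_{L^2(\omega)}^2$ on the band-limited piece, and absorption of the high-frequency piece via $e^{-2T\alpha_R}$. After handling the cross term $\Vert e^{-tF}g_{>R}\Vert_{L^2(\omega)}$ you are forced to require $\kappa(R)\,e^{-2T\alpha_R}\lesssim\varepsilon$. But the only spectral inequality available for band-limited functions is Kovrijkine's, with $\kappa(R)\sim e^{CR}$ (Theorem~\ref{Kovrij}); the quasi-analytic bounds do \emph{not} furnish the inequality you wrote, because they control $\Vert\partial^\beta e^{-tF}g\Vert$ in terms of $\Vert g\Vert$, not in terms of $\Vert e^{-tF}g\Vert$. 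For $F(r)=r/\log(e+r)$ --- covered by the theorem via Example~\ref{21122020E1} --- one has $\alpha_R\sim R/\log R$, so $CR-2T\alpha_R\to+\infty$ and no choice of $R$ closes the estimate; even for $F(r)=r$ the scheme fails when $T$ is small. You are in effect running a Lebeau--Robbiano iteration, which is precisely what is known to break down at and below the half-heat regime.

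The paper's proof contains the idea you are missing: there is \emph{no} frequency cutoff. One applies the Kovrijkine good/bad-cube mechanism directly to $h=e^{-tF(|D_x|)}g$ for each fixed $t\ge T/2$, using as sole input the global bound $\Vert\partial_x^\beta h\Vert_{L^2(\mathbb R^n)}\le M^{T'G}_{|\beta|}\Vert g\Vert_{L^2(\mathbb R^n)}$ with $G=F-\inf F$, $T'=T/2$. A cube $Q$ is declared \emph{good} when $\Vert\partial_x^\beta h\Vert_{L^2(Q)}^2\le \varepsilon^{-1}2^{2|\beta|+n}(M^{T'G}_{|\beta|})^2\Vert h\Vert_{L^2(Q)}^2$ for all $\beta$; on such a cube the rescaled, normalized restriction of $h$ lies in a \emph{fixed} quasi-analytic class (no $R$ anywhere), and the Nazarov--Sodin--Volberg / Jaye--Mitkovski estimate (Proposition~\ref{15122020P1}) yields $\Vert h\Vert_{L^2(Q)}\le C_\varepsilon\Vert h\Vert_{L^2(\omega\cap Q)}$ with $C_\varepsilon$ independent of $g$, $t$, and the cube. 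Summing the bad cubes against the global Bernstein bound contributes at most $\varepsilon\Vert g\Vert^2$. This gives the pointwise-in-$t$ uncertainty principle of Proposition~\ref{uncertainty_principle}, and integrating in $t$ over $[T/2,T]$ yields the weak observability estimate with no parameters to balance. (Incidentally, the Appendix does not contain the quasi-analytic observability lemma you anticipate; it proves the unrelated Proposition~\ref{18122020T2}.)
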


The necessary part of Theorem \ref{21102020T1} is a consequence of the fact that the cost-uniform approximate null-controllability implies rapid stabilization, see Proposition \ref{06012021P1}, and is therefore a consequence of Corollary \ref{complete_stabilization} (notice that the assumption on $F$ in Theorem \ref{21102020T1} implies in particular that $\lim_{+\infty}F = +\infty$.)

Let us now present explicit examples of functions $F$ generating quasi-analytic sequences $\mathcal M^F$ and for which Theorem \ref{21102020T1} therefore applies.

\begin{ex}\label{06112020D1} Let us assume that the non-negative continuous function $F:[0,+\infty)\rightarrow[0,+\infty)$ satisfies $\Theta\le F$, where the weight $\Theta:[0,+\infty)\rightarrow[0,+\infty)$ verifies the following properties:
\begin{enumerate}[label=$(\roman*)$,leftmargin=* ,parsep=2pt,itemsep=0pt,topsep=2pt]
	\item $\Theta(0) = 0$ and $\Theta$ is non-decreasing with $\lim_{+\infty}\Theta = +\infty$,
	\item $\Theta$ is lower-semicontinuous and $t\in\mathbb R\mapsto\Theta(e^t)$ is convex,
	\item $$\int_0^{+\infty}\frac{\Theta(t)}{1+t^2}\ \mathrm dt = +\infty.$$
\end{enumerate}
It follows from the work \cite{JM} (see also Proposition \ref{06112020P1} in the present work) that the sequence $\mathcal M^{\Theta}$ associated with the weight $\Theta$, defined in \eqref{16122020E1}, is quasi-analytic. Moreover, we have $\mathcal M^F\le\mathcal M^{\Theta}$, since $\Theta\le F$, and Lemma \ref{17122020L1} implies that the sequence $\mathcal M^F$ is also quasi-analytic. We deduce that for all Borel set $\omega\subset\mathbb R^n$ and all positive time $T>0$, the equation \eqref{01062020E1} is cost-uniformly approximately null-controllable from the set $\omega$ in time $T$ if and only if $\omega$ is thick. A relevant particular example is when $F(t) = \Theta(t) = t$. Indeed, the associated evolution equation is the half heat equation (\hyperref[01062020E1]{$E_t$}) posed on the whole space associated with the operator $\sqrt{-\Delta}$, known to be not null-controllable from any non dense control support $\omega$ (at least when $n=1$), see \cite{K} (Theorem 2.3) or \cite{L} (Theorem 1.1). This evolution equation is then a relevant example where the thick condition fails to be sufficient for the (strong) null-controllability but appears to be necessary and sufficient for the cost-uniform approximate null-controllability.
\end{ex}

Actually, we are able to derive cost-uniform approximate null-controllability results for much less diffusive equations than the half heat equation (\hyperref[01062020E1]{$E_t$}), as illustrated in the two following examples.

\begin{ex}\label{21122020E1} Let $s\geq1$, $0 \leq \delta \leq 1$ be non-negative real numbers and $F_{s, \delta} : [0,+\infty)\rightarrow[0,+\infty)$ be the non-negative continuous function defined for all $t\geq0$ by
$$F_{s,\delta}(t) = \frac{t^s}{\log^{\delta}(e+t)}.$$
We check in Corollary \ref{21122020C1} that the associated sequence $\mathcal M^{F_{s,\delta}}$ defined in \eqref{16122020E1} is a quasi-analytic sequence of positive real numbers. Therefore, for all Borel set $\omega\subset\mathbb R^n$ and all positive time $T>0$, the equation (\hyperref[01062020E1]{$E_{F_{s,\delta}}$}) is cost-uniformly approximately null-controllable from the set $\omega$ in time $T$ if and only if $\omega$ is thick. 
\end{ex}

\begin{ex}\label{ex_diffusion}  Let $p\geq1$ be a positive integer and $F_p : [0,+\infty)\rightarrow[0,+\infty)$ be the non-negative continuous function defined for all $t\geq0$ by
\begin{equation*}
	F_p(t) = \frac t{g(t)(g\circ g)(t)... g^{\circ p}(t)},\quad\text{where}\quad g(t) = \log(e+t),
\end{equation*}
with $g^{\circ p} = g\circ\ldots\circ g$ ($p$ compositions). We check in Proposition \ref{19122020P1} that the associated sequence $\mathcal M^{F_p}$ defined in \eqref{16122020E1} is quasi-analytic. As a consequence, for all Borel set $\omega\subset\mathbb R^n$ and all positive time $T>0$, the equation (\hyperref[01062020E1]{$E_{F_p}$}) is cost-uniformly approximately null-controllable from the set $\omega$ in time $T$ if and only if $\omega$ is thick. 
\end{ex}
Regarding the weaker notion of approximate null-controllability presented at the beginning of this section, the geometry of the allowed control support is much simpler.  Indeed, the following proposition ensures that the control system \eqref{01062020E1} is approximately null-controllable in any positive time $T>0$ and from any measurable set $\omega\subset\mathbb R^n$ with positive Lebesgue measure when $F$ generates a log-convex quasi-analytic sequence $\mathcal M^F$.

\begin{prop}\label{approximate_controllability_weak} Let $F : [0,+\infty) \rightarrow \mathbb R$ be a continuous function bounded from below and $\omega\subset\mathbb R^n$ be a measurable set with positive Lebesgue measure. If the sequence $\mathcal M^F$ associated with the function $F$, defined in \eqref{16122020E1}, is a quasi-analytic sequence of positive real numbers, then for all positive time $T>0$, the diffusive equation \eqref{01062020E1} is approximately null-controllable from the support control $\omega$ in time $T$.
\end{prop}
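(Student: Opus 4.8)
The plan is to proceed by duality, in the spirit of the proof of Theorem~\ref{21102020T1} but here with no quantitative ingredient whatsoever: approximate null-controllability in time $T$ will be reduced to a unique continuation property for the free evolution, which we shall establish by combining the quasi-analytic regularity of $e^{-\tau F(\vert D_x\vert)}\varphi$ with the classical fact that a smooth function vanishing on a set of positive Lebesgue measure vanishes to infinite order at almost every point of that set.

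First I would recall that the mild solution of \eqref{01062020E1} reads $f(T,\cdot) = e^{-TF(\vert D_x\vert)}f_0 + \mathcal L_T h$, where $\mathcal L_T h = \int_0^T e^{-(T-s)F(\vert D_x\vert)}\big(\mathbbm 1_\omega h(s,\cdot)\big)\,\mathrm ds$ defines a bounded operator $\mathcal L_T : L^2((0,T)\times\omega)\to L^2(\mathbb R^n)$. Since the Fourier multiplier $e^{-TF(\vert D_x\vert)}$ has the non-vanishing bounded symbol $e^{-TF(\vert\xi\vert)}$, it has dense range in $L^2(\mathbb R^n)$, so the equation \eqref{01062020E1} is approximately null-controllable from $\omega$ in time $T$ if and only if $\mathcal L_T$ has dense range, that is, if and only if $\mathcal L_T^*$ is injective. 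Computing this adjoint and using that the operator $F(\vert D_x\vert)$ is self-adjoint, this injectivity is equivalent to the following implication: if $\varphi\in L^2(\mathbb R^n)$ satisfies $e^{-\tau F(\vert D_x\vert)}\varphi = 0$ almost everywhere on $\omega$ for almost every $\tau\in(0,T)$, then $\varphi = 0$.

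To prove this implication, I would fix such a $\varphi$ and, using the hypothesis, choose $\tau_0\in(0,T)$ such that $u := e^{-\tau_0 F(\vert D_x\vert)}\varphi$ vanishes almost everywhere on $\omega$. Writing $e^{-\tau_0 F} = e^{-(\tau_0/2)F}e^{-(\tau_0/2)F}$ and noticing that $e^{-(\tau_0/2)F(\vert\cdot\vert)}\in L^2(\mathbb R^n)$ (the finiteness of all the terms $M^F_k$ in \eqref{16122020E1} forces $e^{-F(r)}\le M^F_k r^{-k}$ for every $k$, hence enough decay of $e^{-\tau_0 F}$), the Cauchy--Schwarz inequality gives, for every multi-index $\alpha$ and every $x\in\mathbb R^n$,
$$\vert\partial_x^\alpha u(x)\vert\le(2\pi)^{-n}\big\Vert\,\vert\xi\vert^{\vert\alpha\vert}e^{-(\tau_0/2)F(\vert\xi\vert)}\big\Vert_{L^\infty(\mathbb R^n)}\big\Vert e^{-(\tau_0/2)F(\vert\cdot\vert)}\big\Vert_{L^2(\mathbb R^n)}\Vert\widehat\varphi\Vert_{L^2(\mathbb R^n)} = C\,M^{(\tau_0/2)F}_{\vert\alpha\vert},$$
where $M^{cF}_k = \sup_{r\geq0}r^k e^{-cF(r)}$ denotes the general term of the log-convex sequence $\mathcal M^{cF}$ obtained from \eqref{16122020E1} by replacing $F$ with $cF$. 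In particular $u\in\mathcal C^\infty(\mathbb R^n)$, and restricting $u$ to an arbitrary affine line and using $\sum_{\vert\alpha\vert=k}\binom{k}{\alpha}=n^k$, we see that the restriction of $u$ to any line belongs to the Denjoy--Carleman class $\mathcal C_{\mathcal N}([a,b])$ for all real numbers $a<b$, where $\mathcal N_k = D^{k+1}M^{(\tau_0/2)F}_k$ for a suitable constant $D\geq1$. The basic properties of quasi-analytic sequences collected in Section~\ref{QA} --- namely Denjoy--Carleman's theorem, the fact that $\mathcal M^{cF}$ is quasi-analytic for every $c>0$ as soon as $\mathcal M^F$ is, and the invariance of quasi-analyticity of a log-convex sequence under multiplication by a geometric sequence --- then ensure that the class $\mathcal C_{\mathcal N}$ is quasi-analytic.

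It remains to invoke the classical fact that, $u$ being of class $\mathcal C^\infty$ and vanishing almost everywhere on the set $\omega$ of positive Lebesgue measure, one has $\partial_x^\alpha u = 0$ almost everywhere on $\omega$ for every $\alpha$. Choosing a Lebesgue density point $x_0$ of $\omega$ at which all these derivatives vanish, the restriction of $u$ to each affine line through $x_0$ is an element of a quasi-analytic Denjoy--Carleman class which vanishes to infinite order at $x_0$, hence is identically zero; since every point of $\mathbb R^n$ lies on such a line, we conclude that $u\equiv0$. Passing to the Fourier side, $e^{-\tau_0 F(\vert\xi\vert)}\widehat\varphi(\xi) = 0$ for almost every $\xi\in\mathbb R^n$, and as $e^{-\tau_0 F(\vert\xi\vert)}>0$ everywhere this forces $\widehat\varphi = 0$, so $\varphi = 0$, which ends the proof. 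The main obstacle is the derivative estimate of the third paragraph and, relatedly, the control of the rescaling $F\mapsto cF$ with $c<1$ at the level of quasi-analytic classes (which cannot be avoided when $T$ is small and relies on the results of Section~\ref{QA}); the duality reduction and the density-point argument are then routine.
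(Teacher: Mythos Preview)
Your proof is correct and follows essentially the same overall scheme as the paper's: both reduce the approximate null-controllability to a unique continuation property for $u=e^{-\tau_0 F(\vert D_x\vert)}\varphi$ at some fixed time, both prove quasi-analytic derivative bounds for $u$ via the Fourier side (the paper obtains $M^{TF}_{\vert\beta\vert+n}$ by weighting with $(1+\vert\xi\vert^{2n})^{-1}$, you obtain $M^{(\tau_0/2)F}_{\vert\beta\vert}$ by splitting $e^{-\tau_0 F}=e^{-(\tau_0/2)F}e^{-(\tau_0/2)F}$), and both invoke Lemma~\ref{17122020L2} to pass from $\mathcal M^F$ to $\mathcal M^{cF}$.

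The genuine difference lies in the concluding step. The paper relies on Proposition~\ref{15122020P1} (the $L^2$ estimate of Jaye--Mitkovski) applied on dilated cubes $\lambda Q$ to force $u\equiv0$. Your route is more self-contained: you use the elementary fact that a smooth function vanishing on a set of positive measure has all derivatives vanishing a.e.\ on that set (this is proved by iterated Fubini and the 1D accumulation-point argument), then restrict $u$ to lines through such a point and apply the one-dimensional Denjoy--Carleman definition of quasi-analyticity directly. This avoids the external input of Proposition~\ref{15122020P1} entirely, at the cost of the extra (but classical) density-point lemma; on the other hand it yields no quantitative information, whereas the paper's route could in principle be made quantitative. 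Your mention of ``Lebesgue density point'' is harmless but unnecessary: once you know all $\partial^\alpha u$ vanish a.e.\ on $\omega$, any single point where they all vanish suffices for the line argument.
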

In particular, diffusive equations discussed in Examples \ref{06112020D1}, \ref{21122020E1} and \ref{ex_diffusion} are approximately null-controllable in any positive time $T>0$ from any measurable subset $\omega\subset\mathbb R^n$ satisfying $\Leb(\omega)>0$. 

\section{(Rapid) Stabilization of diffusive equations}\label{stab}

The aim of this section is to prove Theorem \ref{11112020T1} and Proposition \ref{negativ_result} concerning the stabilization and the rapid stabilization properties of the following general control system
\begin{equation}\label{02062020E1}\tag{$E_F$}
\left\{\begin{aligned}
	& \partial_tf(t,x) + F(\vert D_x\vert) f(t,x) = h(t,x)\mathbbm 1_{\omega}(x),\quad t>0,\ x\in\mathbb R^n, \\
	& f(0,\cdot) = f_0\in L^2(\mathbb R^n),
\end{aligned}\right.
\end{equation}
where $F : [0,+\infty) \rightarrow \mathbb R$ is a continuous function bounded from below and $\omega\subset\mathbb R^n$ is a measurable set.

\subsection{Proof of Theorem \ref{11112020T1}: assertion $(i)$}\label{necessary_part} First of all, let us assume that the equation \eqref{02062020E1} is exponentially stabilizable from the set $\omega$, with the additional assumption that $\inf F\le0$. We aim at proving that the control support $\omega$ is then thick. To that end, we will use the following nice characterizations of exponential stabilization in terms of observability estimates, taken from the work \cite{MR4041279}:

\begin{thm}[Theorem 1 in \cite{MR4041279}]\label{29102020T2} The following assertions are equivalent:
\begin{enumerate}[label=$(\roman*)$,leftmargin=* ,parsep=2pt,itemsep=0pt,topsep=2pt]
	\item The evolution system \eqref{01062020E1} is exponentially stabilizable from $\omega$.
	\item For all $\varepsilon\in(0,1)$, there exist $T>0$  and $C>0$ such that for all $g\in L^2(\mathbb R^n)$,
	$$\big\Vert e^{-TF(\vert D_x\vert)}g\big\Vert^2_{L^2(\mathbb R^n)}\le C\int_0^T\big\Vert e^{-tF(\vert D_x\vert)}g\big\Vert^2_{L^2(\omega)}\ \mathrm dt 
	+ \varepsilon\Vert g\Vert^2_{L^2(\mathbb R^n)}.$$
	\item There exist $\varepsilon\in(0,1)$, $T>0$ and $C>0$ such that for all $g\in L^2(\mathbb R^n)$,
	$$\big\Vert e^{-TF(\vert D_x\vert)}g\big\Vert^2_{L^2(\mathbb R^n)}\le C\int_0^T\big\Vert e^{-tF(\vert D_x\vert)}g\big\Vert^2_{L^2(\omega)}\ \mathrm dt 
	+ \varepsilon\Vert g\Vert^2_{L^2(\mathbb R^n)}.$$
\end{enumerate}
\end{thm}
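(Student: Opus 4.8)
The plan is to dispose of the easy equivalence $(ii)\Leftrightarrow(iii)$ first and then to concentrate on $(i)\Leftrightarrow(ii)$. Throughout I write $S(t)=e^{-tF(\vert D_x\vert)}$; since $F$ is bounded from below, $F(\vert D_x\vert)-\inf F$ is a nonnegative self-adjoint operator, so $S$ is a self-adjoint $C_0$-semigroup on $H:=L^2(\mathbb R^n)$, and both the control operator $h\mapsto\mathbbm 1_{\omega}h$ and the observation operator $g\mapsto\mathbbm 1_{\omega}g$ are bounded, which places us in the classical bounded-control-operator setting. The implication $(ii)\Rightarrow(iii)$ is immediate. For $(iii)\Rightarrow(ii)$ I would iterate the inequality of $(iii)$: applying it with $S(kT)g$ in place of $g$ and using $S(t_1)S(t_2)=S(t_1+t_2)$ together with $\varepsilon<1$ gives, by induction on $k\geq1$,
$$\big\Vert S(kT)g\big\Vert_{L^2(\mathbb R^n)}^2\leq C\int_0^{kT}\big\Vert S(t)g\big\Vert_{L^2(\omega)}^2\,\mathrm dt+\varepsilon^k\Vert g\Vert_{L^2(\mathbb R^n)}^2,$$
so that any prescribed defect in $(0,1)$ is attained at a large enough time $kT$.

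For $(i)\Rightarrow(ii)$ I would argue straight from the feedback. Let $K\in\mathcal L(H)$ and $M\geq1$, $\alpha>0$ be such that $\Vert S_K(t)\Vert_{\mathcal L(H)}\leq Me^{-\alpha t}$, where $S_K(t)=e^{-t(F(\vert D_x\vert)+\mathbbm 1_{\omega}K)}$. Given $g\in H$, the open-loop control defined by $\mathbbm 1_{\omega}h(t)=-\mathbbm 1_{\omega}KS_K(t)g$ steers $g$ to $f(T)=S_K(T)g$ (by the Duhamel identity relating $S_K$ to $S$), whence $\Vert f(T)\Vert\leq Me^{-\alpha T}\Vert g\Vert$ and $\int_0^T\Vert h(t)\Vert_{L^2(\omega)}^2\,\mathrm dt\leq\frac{\Vert K\Vert^2M^2}{2\alpha}\Vert g\Vert^2$. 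Pairing the Duhamel identity $f(T)=S(T)g+\int_0^TS(T-t)\mathbbm 1_{\omega}h(t)\,\mathrm dt$ with an arbitrary $\phi\in H$, taking $g=S(T)\phi$ (so that $\langle S(T)g,\phi\rangle=\Vert S(T)\phi\Vert^2=\Vert g\Vert^2$ since $S$ is self-adjoint) and applying the Cauchy--Schwarz inequality together with the change of variable $s=T-t$, one obtains
$$\big\Vert S(T)\phi\big\Vert_{L^2(\mathbb R^n)}^2\leq 2M^2e^{-2\alpha T}\Vert\phi\Vert_{L^2(\mathbb R^n)}^2+\frac{\Vert K\Vert^2M^2}{\alpha}\int_0^T\big\Vert S(s)\phi\big\Vert_{L^2(\omega)}^2\,\mathrm ds;$$
given $\varepsilon\in(0,1)$, it then suffices to pick $T$ with $2M^2e^{-2\alpha T}\leq\varepsilon$.

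The heart of the matter is $(ii)\Rightarrow(i)$, which I would run along the chain: weak observability $\Rightarrow$ approximate controllability with a genuine contraction and linear cost $\Rightarrow$ open-loop exponential stabilization with quadratic cost (the \emph{finite cost condition}) $\Rightarrow$ a bounded stabilizing feedback. For the first link I would use the penalized HUM method: with $(\varepsilon,T,C_{\mathrm{obs}})$ as in $(ii)$ and $\varepsilon<1$, minimizing $\tfrac12\Vert h\Vert^2_{L^2(0,T;L^2(\omega))}+\tfrac1{2\lambda}\Vert f_h(T)\Vert^2$ over controls $h$ with initial state $g$, the dual variable $\psi_\lambda=\tfrac1\lambda f_{h_\lambda}(T)=(\Lambda_T+\lambda)^{-1}S(T)g$, where $\Lambda_T=\int_0^TS(s)\mathbbm 1_{\omega}S(s)\,\mathrm ds$ is the Gramian, satisfies $\Vert h_\lambda\Vert^2+\lambda\Vert\psi_\lambda\Vert^2=\langle S(T)g,\psi_\lambda\rangle$ and $\int_0^T\Vert S(s)\psi_\lambda\Vert_{L^2(\omega)}^2\,\mathrm ds=\Vert h_\lambda\Vert^2$; plugging $(ii)$ applied to $\psi_\lambda$ into these relations yields, after elementary estimates, $\Vert f_{h_\lambda}(T)\Vert^2\leq(\varepsilon+C_{\mathrm{obs}}\lambda)\Vert g\Vert^2$ and $\Vert h_\lambda\Vert^2\leq(C_{\mathrm{obs}}+\varepsilon/\lambda)\Vert g\Vert^2$. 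As $\varepsilon<1$, the choice $\lambda=(1-\varepsilon)/(2C_{\mathrm{obs}})$ makes the contraction factor $\varepsilon'=(1+\varepsilon)/2$ lie in $(0,1)$ while keeping the cost bounded by $C'\Vert g\Vert^2$. Iterating over the intervals $[kT,(k+1)T]$ and concatenating the controls then produces, for every $g\in H$, a control $h\in L^2(0,+\infty;L^2(\omega))$ with $\Vert f_h(t)\Vert\leq M'e^{-\gamma t}\Vert g\Vert$ for some $\gamma>0$ and $\int_0^{+\infty}\Vert h(t)\Vert_{L^2(\omega)}^2\,\mathrm dt\leq\widetilde C\Vert g\Vert^2$, i.e.\ the finite cost condition for the pair $(-F(\vert D_x\vert),\mathbbm 1_{\omega})$. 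Since the control operator is bounded, the standard infinite-dimensional linear--quadratic theory then provides a unique bounded nonnegative self-adjoint solution $\Pi$ of the associated algebraic Riccati equation for which $-(F(\vert D_x\vert)+\mathbbm 1_{\omega}\Pi)$ generates an exponentially stable semigroup; this is $(i)$ with feedback $K=\Pi$.

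The iteration in $(iii)\Rightarrow(ii)$ and the Duhamel--pairing computation in $(i)\Rightarrow(ii)$ are routine. The two steps I expect to be genuinely delicate are, first, the tuning of the penalization parameter $\lambda$ in the duality argument so as to secure simultaneously a contraction factor strictly below $1$ and a finite control cost --- this is exactly where the hypothesis $\varepsilon<1$ is decisive, a defect $\varepsilon\geq1$ being useless --- and, second, the passage from the open-loop finite cost condition to an honest bounded feedback, which rests on the linear--quadratic/Riccati machinery and, crucially, on the boundedness of $\mathbbm 1_{\omega}$ as a control operator. A more elementary Gramian-based feedback, of the kind available for exactly controllable systems, does not seem to be at hand here, precisely because the defect term in $(iii)$ prevents $\Lambda_T$ from being coercive.
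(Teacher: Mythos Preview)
The paper does not prove this theorem: it is quoted verbatim as Theorem~1 of \cite{MR4041279} and used as a black box (see the beginning of Subsection~\ref{necessary_part}). So there is no ``paper's own proof'' to compare against, and your write-up stands on its own.

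That said, your outline is sound and follows the standard route. One part of it does have a direct counterpart in the paper: your $(i)\Rightarrow(ii)$ argument---build a control from the feedback, use the Duhamel identity for the perturbed semigroup, pair with $S(T)\phi$, apply Cauchy--Schwarz---is exactly the computation carried out in the Appendix (Section~\ref{appendix}) to prove the closely related Proposition~\ref{18122020T2}. The paper's version keeps the rate $\alpha$ explicit and arrives at
\[
\big\Vert S(T)g\big\Vert^2\le 2M_\alpha^2\Vert K_\alpha\Vert^2\,T\int_0^T\Vert S(t)g\Vert_{L^2(\omega)}^2\,\mathrm dt+2M_\alpha^2e^{-2\alpha T}\Vert g\Vert^2,
\]
which is the same inequality as yours up to the harmless constant in front of the integral.

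For the remaining implications the paper offers nothing, so let me just flag that your chain for $(ii)\Rightarrow(i)$ is the natural one (penalized HUM $\to$ contraction on $[0,T]$ $\to$ concatenation $\to$ finite cost condition $\to$ Riccati feedback); in the penalization step your claimed bound $\Vert f_{h_\lambda}(T)\Vert^2\le(\varepsilon+C_{\mathrm{obs}}\lambda)\Vert g\Vert^2$ is slightly looser than what the identities actually give (one gets $\max(\varepsilon,C_{\mathrm{obs}}\lambda)$), but of course this only helps and your choice $\lambda=(1-\varepsilon)/(2C_{\mathrm{obs}})$ does produce a contraction factor $(1+\varepsilon)/2<1$ as stated. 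The appeal to the bounded-control Riccati theory for the last step is appropriate here since $\mathbbm 1_\omega$ is bounded.
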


According to the above theorem, assuming that the equation \eqref{02062020E1} is exponentially stabilizable from $\omega$ is equivalent to assuming that there exist $\varepsilon\in(0,1)$, $T>0$ and $C>0$ such that for all $g\in L^2(\mathbb R^n)$,
\begin{equation}\label{19122019E4}
	\big\Vert e^{-TF(\vert D_x\vert)}g\big\Vert^2_{L^2(\mathbb R^n)}\le C\int_0^T\big\Vert e^{-tF(\vert D_x\vert)}g\big\Vert^2_{L^2(\omega)}\ \mathrm dt 
	+ \varepsilon\Vert g\Vert^2_{L^2(\mathbb R^n)}.
\end{equation}
The strategy consists in applying this observability estimate for well-chosen functions $g\in L^2(\mathbb R^n)$. This approach has especially been used in the works \cite{A, AB, BEPS, HWW}, in which exponential stabilization or null-controllability issues are studied for fractional heat equations or evolution equations associated with (non)-autonomous Ornstein-Uhlenbeck operators posed on the whole space $\mathbb R^n$.

Fixing $x_0\in\mathbb R^n$ and considering $\xi_0\in\mathbb R^n$ together with $l\gg1$ whose values will be adjusted later, we consider the Gaussian function $g_{l,\xi_0}$ defined by
$$\forall x\in\mathbb R^n,\quad g_{l,\xi_0}(x) = \frac1{l^n}\exp\bigg(ix\cdot\xi_0-\frac{\vert x-x_0\vert^2}{2l^2}\bigg).$$
Classical results concerning Fourier transform of Gaussian functions show that
\begin{equation}\label{06112020E2}
	\forall\xi\in\mathbb R^n,\quad\widehat g_{l,\xi_0}(\xi) = (2\pi)^{n/2}\exp\bigg(-ix_0\cdot(\xi-\xi_0)-\frac{l^2\vert\xi-\xi_0\vert^2}2\bigg).
\end{equation}
On the one hand, it follows from Plancherel's theorem that the left-hand side of the inequality \eqref{19122019E4} applied to the functions $g_{l,\xi_0}$ is a positive constant independent of the point $x_0$, denoted $\delta_{l,\xi_0}>0$ in the following and given by
\begin{align}\label{19122019E3}
	\delta_{l,\xi_0} = \big\Vert e^{-TF(\vert D_x\vert)}g_{l,\xi_0}\big\Vert^2_{L^2(\mathbb R^n)} 
	& = \int_{\mathbb R^n}\big\vert e^{-ix_0\cdot(\xi-\xi_0)}e^{-TF(\vert\xi\vert)}e^{-l^2\vert\xi-\xi_0\vert^2/2}\big\vert^2\ \mathrm d\xi \\
	& = \frac1{l^n}\int_{\mathbb R^n}\big\vert e^{-TF(\vert\xi/l + \xi_0\vert)}e^{-\vert\xi\vert^2/2}\big\vert^2\ \mathrm d\xi>0. \nonumber %\\[5pt]
	%& >0. \nonumber
\end{align}
On the other hand, we get that the $L^2$-norm of the function $g_{l,\xi_0}$ also does not depend on the point $x_0\in\mathbb R^n$ and is given by the following Gaussian integral
\begin{equation}\label{27012021E1}
	\Vert g_{l,\xi_0}\Vert^2_{L^2(\mathbb R^n)} = \frac1{l^{2n}}\int_{\mathbb R^n}e^{-\vert x\vert^2/l^2}\ \mathrm dx = \bigg(\frac{\pi}{l^2}\bigg)^{n/2}.
\end{equation}
Let us check that the point $\xi_0\in\mathbb R^n$ and the large positive parameter $l\gg1$ can be adjusted so that $\delta_{l,\xi_0} - \varepsilon\Vert g_{l,\xi_0}\Vert^2_{L^2(\mathbb R^n)}>0$, that is, by \eqref{19122019E3} and \eqref{27012021E1},
\begin{equation}\label{06102020E1}
	\int_{\mathbb R^n}\big\vert e^{-TF(\vert\xi/l+\xi_0\vert)}e^{-\vert\xi\vert^2/2}\big\vert^2\ \mathrm d\xi>\varepsilon\pi^{n/2}.
\end{equation}
Since $\varepsilon\in(0,1)$ and the function $F$ satisfies $\inf F\le0$, we can assume that the point $\xi_0\in\mathbb R^n$ is chosen in order to satisfy $e^{-2TF(\vert\xi_0\vert)}>\varepsilon$. Since the function $F$ is bounded from below, the dominated convergence theorem then implies that
\begin{align*}
	\lim_{l\rightarrow+\infty}\int_{\mathbb R^n}\big\vert e^{-TF(\vert\xi/l+\xi_0\vert)}e^{-\vert\xi\vert^2/2}\big\vert^2\ \mathrm d\xi 
	& = e^{-2TF(\vert\xi_0\vert)}\int_{\mathbb R^n}\big\vert e^{-\vert\xi\vert^2/2}\big\vert^2\ \mathrm d\xi \\[5pt]
	& = e^{-2TF(\vert\xi_0\vert)}\pi^{n/2} >\varepsilon\pi^{n/2}. %\\[5pt]
	%& >\varepsilon\pi^{n/2}.
\end{align*}
The parameter $l\gg1$ can therefore be adjusted so that \eqref{06102020E1} holds. The values of $\xi_0\in\mathbb R^n$ and $l\gg1$ are now fixed. We therefore deduce from \eqref{19122019E4} and \eqref{06102020E1} that
\begin{equation}\label{16082020E1}
	M_{l,\xi_0}\le C\int_0^T\big\Vert e^{-tF(\vert D_x\vert)}g_{l,\xi_0}\big\Vert^2_{L^2(\omega)}\ \mathrm dt\quad\text{with}\quad M_{l,\xi_0} = \delta_{l,\xi_0} - \varepsilon\Vert g_{l,\xi_0}\Vert^2_{L^2(\mathbb R^n)}>0.
\end{equation}
Moreover, by introducing $\mathscr F_{\xi}^{-1}$ the partial inverse Fourier transform with respect to the variable $\xi\in\mathbb R^n$ and using \eqref{06112020E2}, the right-hand side of this inequality (up to the constant $C$) writes as 
\begin{align*}
	\int_0^T\big\Vert e^{-tF(\vert D_x\vert)}g_{l,\xi_0}\big\Vert^2_{L^2(\omega)}\ \mathrm dt 
	& = (2\pi)^n \int_0^T\int_{\omega}\big\vert\mathscr F^{-1}_{\xi}(e^{-ix_0\cdot(\xi-\xi_0)}e^{-tF(\vert\xi\vert)}e^{-l^2\vert\xi-\xi_0\vert^2/2})(x)\big\vert^2\ \mathrm dx\mathrm dt \\[5pt]
	& = (2\pi)^n \int_0^T\int_{\omega}\big\vert\mathscr F^{-1}_{\xi}(e^{-tF(\vert\xi\vert)}e^{-l^2\vert\xi-\xi_0\vert^2/2})(x-x_0)\big\vert^2\ \mathrm dx\mathrm dt \\[5pt]
	& = (2\pi)^n \int_0^T\int_{\omega-x_0}\big\vert\mathscr F^{-1}_{\xi}(e^{-tF(\vert\xi\vert)}e^{-l^2\vert\xi-\xi_0\vert^2/2})(x)\big\vert^2\ \mathrm dx\mathrm dt.
\end{align*}
Given $r>0$ a positive radius whose value will be chosen later, we split the previous integral in two parts and obtain the following estimate:
\begin{multline}\label{23082018E6}
	\int_0^T\big\Vert e^{-tF(\vert D_x\vert)}g_{l,\xi_0}\big\Vert^2_{L^2(\omega)}\ \mathrm dt \\[5pt]
	\le (2\pi)^n\int_0^T\int_{(\omega-x_0)\cap[-r,r]^n}\big\vert\mathscr F^{-1}_{\xi}(e^{-tF(\vert\xi\vert)}e^{-l^2\vert\xi-\xi_0\vert^2/2})(x)\big\vert^2\ \mathrm dx\mathrm dt \\[5pt]
	+ (2\pi)^n\int_0^T\int_{\vert x\vert>r}\big\vert\mathscr F^{-1}_{\xi}(e^{-tF(\vert\xi\vert)}e^{-l^2\vert\xi-\xi_0\vert^2/2})(x)\big\vert^2\ \mathrm dx\mathrm dt.
\end{multline}
Now, we study one by one the two integrals appearing in the right-hand side of \eqref{23082018E6}. First, notice that for all $0\le t\le T$,
\begin{align*}
	\big\Vert\mathscr F^{-1}_{\xi}(e^{-tF(\vert\xi\vert)}e^{-l^2\vert\xi-\xi_0\vert^2/2})\big\Vert_{L^{\infty}(\mathbb R^n)}
	& \le\frac1{(2\pi)^n}\big\Vert e^{-tF(\vert\xi\vert)}e^{-l^2\vert\xi-\xi_0\vert^2/2}\big\Vert_{L^1(\mathbb R^n)} \\[5pt]
	& \le\frac{e^{-T\inf F}}{(2\pi)^n}\big\Vert e^{-l^2\vert\xi-\xi_0\vert^2/2}\big\Vert_{L^1(\mathbb R^n)} \\[5pt]
	& = \frac{e^{-T\inf F}}{(2\pi)^n}\bigg(\frac{2\pi}{l^2}\bigg)^{n/2}.
\end{align*}
It therefore follows from the invariance by translation of the Lebesgue measure that
\begin{multline}\label{23082018E7}
	(2\pi)^n\int_0^T\int_{(\omega-x_0)\cap[-r,r]^n}\big\vert\mathscr F^{-1}_{\xi}(e^{-tF(\vert\xi\vert)}e^{-l^2\vert\xi-\xi_0\vert^2/2})(x)\big\vert^2\ \mathrm dx\mathrm dt  \\[5pt]
	\le \frac{e^{-2T\inf F}}{l^{2n}}\int_0^T\Leb\big((\omega-x_0)\cap [-r,r]^n\big)\ \mathrm dt
	= \frac{Te^{-2T\inf F}}{l^{2n}}\Leb\big(\omega\cap(x_0+[-r,r]^n)\big).
\end{multline}
In order to control the second integral, we use the dominated convergence theorem which justifies the following convergence
$$\int_0^T\int_{\vert x\vert>r}\big\vert\mathscr F^{-1}_{\xi}(e^{-tF(\vert\xi\vert)}e^{-l^2\vert\xi-\xi_0\vert^2/2})(x)\big\vert^2\ \mathrm dx\mathrm dt \underset{r\rightarrow+\infty}{\rightarrow}0,$$
since
$$\mathscr F^{-1}_{\xi}(e^{-tF(\vert\xi\vert)}e^{-l^2\vert\xi-\xi_0\vert^2/2})\in L^2([0,T]\times\mathbb R^n).$$
Thus, we can choose the radius $r\gg1$ large enough so that
\begin{equation}\label{19122019E6}
	(2\pi)^nC\int_0^T\int_{\vert x\vert>r}\big\vert\mathscr F^{-1}_{\xi}(e^{-tF(\vert\xi\vert)}e^{-l^2\vert\xi-\xi_0\vert^2/2})(x)\big\vert^2\ \mathrm dx\mathrm dt\le \frac{M_{l,\xi_0}}2.
\end{equation}
Gathering \eqref{16082020E1}, \eqref{23082018E6}, \eqref{23082018E7} and \eqref{19122019E6}, we obtain the following estimate
$$\forall x_0\in\mathbb R^n,\quad \frac{M_{l,\xi_0}}2\le\frac C{l^{2n}}Te^{-2T\inf F}\Leb\big(\omega\cap(x_0+[-r,r]^n)\big).$$
This proves that the control support $\omega$ is actually a thick set.

\subsection{Proof of Theorem \ref{11112020T1}: assertion $(ii)$} In this second subsection, assuming that $\omega$ is a thick set and that $\liminf_{+\infty}F>\inf F$, we establish the estimate \eqref{29102020T1}. The strategy consists in constructing an adapted Lyapunov function, inspired by the proof of \cite{Xiang} (Theorem 1.1) in which the author studies the stabilization properties of the heat equation posed on bounded domains of $\mathbb R^n$.

We consider the function $G = F - \inf F$. Since $\liminf_{+\infty} G >0$ by assumption, there exists a positive constant $R_0>0$ such that 
\begin{equation}\label{04032021E1}
	\forall R \geq R_0, \quad \tilde\alpha_R = \inf_{r\geq R} G(r) >0.
\end{equation}
Let us fix $R\geq R_0$ and consider two positive real numbers $\lambda_R,\mu_R>0$ to be chosen later. For all initial data $f_0 \in L^2(\mathbb R^n)$ and all time $t\geq0$, we define $f$ as the mild solution on $[0,+\infty)$ of the control system (\hyperref[02062020E1]{$E_G$}) with feedback $\lambda_RK_R$ at time $t$, that is,
\begin{equation}\label{04032021E2}
	\forall t\geq0,\quad f(t)= e^{-t(G(\vert D_x\vert)+ \lambda_R\mathbbm 1_{\omega}K_R)}f_0,
\end{equation}
where $K_R$ stands for the following orthogonal projection
\begin{equation}\label{06202020E3}
	K_R : L^2(\mathbb R^n)\rightarrow\big\{g\in L^2(\mathbb R^n) : \Supp\widehat g\subset\overline{B(0,R)}\big\}.
\end{equation}
We want to prove that the two constants $\lambda_R$ and $\mu_R$ can be adjusted so that the following estimate holds for all $t\geq0$,
\begin{equation}\label{06112020E4}
	\Vert f(t)\Vert^2_{L^2(\mathbb R^n)}\le\mu_Re^{-\tilde\alpha_Rt}\Vert f_0\Vert^2_{L^2(\mathbb R^n)}.
\end{equation}
To that end, we consider the following Lyapunov function
\begin{equation}\label{Lyapunov}
	V(y) = \mu_R \Vert K_R y\Vert^2_{L^2(\mathbb R^n)} + \Vert(1-K_R)y\Vert^2_{L^2(\mathbb R^n)},\quad y\in L^2(\mathbb R^n).
\end{equation}
Let us equip the operator $G(\vert D_x\vert)+ \lambda_R\mathbbm 1_{\omega}K_R$ with the following domain 
$$D_G = \big\{u\in L^2(\mathbb R^n) : G(\vert D_x\vert)u\in L^2(\mathbb R^n)\big\}.$$
From now, we assume that the Fourier transform of the initial datum $f_0$ is compactly supported. As a consequence, $f_0\in D_G$ and the function $V\circ f$ is of class $C^1$ on $(0,+\infty)$, with
$$\forall t>0,\quad\frac{\mathrm d}{\mathrm dt} V(f(t)) = \mu_R\frac{\mathrm d}{\mathrm dt}\Vert K_Rf(t)\Vert^2_{L^2(\mathbb R^n)} + \frac{\mathrm d}{\mathrm dt}\Vert(1-K_R)f(t)\Vert^2_{L^2(\mathbb R^n)}.$$
We shall need to estimate each term of the right-hand side of this equality. On the one hand, noticing that the operators $G(\vert D_x\vert)$ and $K_R$ commute (they are Fourier multipliers), we have that for all $t>0$,
\begin{multline*}
	\mu_R\frac{\mathrm d}{\mathrm dt}\Vert K_Rf(t)\Vert^2_{L^2(\mathbb R^n)} = 2\mu_R\Reelle\langle K_Rf(t),K_Rf'(t)\rangle_{L^2(\mathbb R^n)} \\
	= -2\mu_R\langle K_Rf(t),G(\vert D_x\vert)K_Rf(t)\rangle_{L^2(\mathbb R^n)} -2 \lambda_R\mu_R\Vert K_Rf(t)\Vert^2_{L^2(\omega)}.
\end{multline*}
The operator $G(|D_x|)$ being accretive, we get that for all $t>0$,
$$\langle K_Rf(t),G(\vert D_x\vert)K_Rf(t)\rangle_{L^2(\mathbb R^n)}\geq0,$$
and as a consequence,
$$\mu_R\frac{\mathrm d}{\mathrm dt}\Vert K_Rf(t)\Vert^2_{L^2(\mathbb R^n)}\le-2\lambda_R\mu_R\Vert K_Rf(t)\Vert^2_{L^2(\omega)}.$$
Moreover, O. Kovrijkine established in \cite{Kovrijkine} (Theorem~3) a quantitative version of the Logvinenko-Sereda theorem for thick sets which will allow us to control the right-hand side of the above estimate. Precisely, this result is the following:

\begin{thm}[Theorem 3 in \cite{Kovrijkine}]\label{Kovrij} There exists a universal positive constant $C_n\geq e$ depending only on the dimension $n\geq 1$ such that for all $\gamma$-thick at scale $L>0$ subset $\omega\subset\mathbb R^n$, 
$$\forall R>0, \forall f\in L^2(\mathbb R^n),\ \Supp\widehat f\subset[-R,R]^n,\quad\Vert f\Vert_{L^2(\mathbb R^n)}\le\Big(\frac{C_n}{\gamma}\Big)^{C_n(1+LR)}\Vert f\Vert_{L^2(\omega)}.$$
\end{thm}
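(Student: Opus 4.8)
The plan is to prove this spectral (Logvinenko–Sereda) inequality by a Whitney‑type decomposition of $\mathbb R^n$ into unit cubes, combined with a Remez–Turán inequality for holomorphic functions.

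\textbf{Reductions.} By the dilation $f\mapsto f(L\,\cdot)$, $\omega\mapsto L^{-1}\omega$, which multiplies both sides of the desired inequality by the same factor and turns $\gamma$‑thickness at scale $L$ into $\gamma$‑thickness at scale $1$, it suffices to treat $L=1$; write $b:=LR$, which is the scale‑invariant quantity. Enlarging the Fourier support if necessary, we may also assume $b\geq1$. Since $\Supp\widehat f$ is compact, $f$ extends to an entire function on $\mathbb C^n$, and Plancherel's theorem yields the Bernstein inequality $\Vert\partial^\alpha f\Vert_{L^2(\mathbb R^n)}\leq b^{|\alpha|}\Vert f\Vert_{L^2(\mathbb R^n)}$ for every multi‑index $\alpha$.

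\textbf{Good and bad cubes.} Partition $\mathbb R^n$ into the unit cubes $Q_j=j+[0,1)^n$, $j\in\mathbb Z^n$. Fix a large dimensional constant $A_n$ and call $Q_j$ \emph{good} if
$$\sum_{\alpha\in\mathbb N^n}\frac1{(2b)^{2|\alpha|}}\Vert\partial^\alpha f\Vert_{L^2(Q_j)}^2\leq A_n\Vert f\Vert_{L^2(Q_j)}^2,$$
and \emph{bad} otherwise. Summing over $j$ and using Bernstein's inequality gives $\sum_j\sum_\alpha(2b)^{-2|\alpha|}\Vert\partial^\alpha f\Vert_{L^2(Q_j)}^2=\sum_\alpha(2b)^{-2|\alpha|}\Vert\partial^\alpha f\Vert_{L^2(\mathbb R^n)}^2\leq(4/3)^n\Vert f\Vert_{L^2(\mathbb R^n)}^2$, so the bad cubes carry at most $(4/3)^nA_n^{-1}\Vert f\Vert_{L^2(\mathbb R^n)}^2\leq\tfrac12\Vert f\Vert_{L^2(\mathbb R^n)}^2$ once $A_n=2(4/3)^n$; hence the good cubes carry at least half of the total $L^2$‑mass. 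It therefore suffices to prove the local inequality $\Vert f\Vert_{L^2(Q_j)}^2\leq(C_n/\gamma)^{C_n(1+b)}\Vert f\Vert_{L^2(\omega\cap Q_j)}^2$ on each good cube and then sum. On a good cube $Q$ one has $\Vert\partial^\alpha f\Vert_{L^2(Q)}\leq A_n^{1/2}(2b)^{|\alpha|}\Vert f\Vert_{L^2(Q)}$ for all $\alpha$; combining this with the Sobolev embedding $W^{m,2}(Q)\hookrightarrow L^\infty(Q)$ (with $m=\lfloor n/2\rfloor+1$) and expanding in Taylor series about the centre $c_Q$ shows that $f$ extends holomorphically to the polydisc $\prod_i\{|z_i-(c_Q)_i|<2\}$ with $\sup|f|\leq e^{C_n(1+b)}\Vert f\Vert_{L^2(Q)}$.

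\textbf{The Remez–Turán step.} The remaining ingredient is the following: if $f$ is holomorphic on a polydisc of polyradius $2$ around a unit cube $Q$, $\sup_{\mathrm{polydisc}}|f|\leq e^M\Vert f\Vert_{L^2(Q)}$, and $E\subset Q$ is measurable with $\Leb(E)\geq\gamma\Leb(Q)$, then $\Vert f\Vert_{L^2(Q)}^2\leq(C_n/\gamma)^{C_nM}\Vert f\Vert_{L^2(E)}^2$. Applied with $E=\omega\cap Q$ (for which $\Leb(\omega\cap Q)\geq\gamma$ by $\gamma$‑thickness at scale $1$) and $M=C_n(1+b)$, this is exactly the local inequality. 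In one variable this is the classical Turán lemma / Remez inequality for functions analytic on a disc, and the $n$‑dimensional case follows by induction on the dimension: writing $Q=Q'\times I$, a Fubini argument isolates a subset $G'\subset Q'$ of measure $\gtrsim\gamma$ of ``good slices'' $x'$ for which simultaneously $\Leb(E_{x'})\gtrsim\gamma$ and $\Vert f(x',\cdot)\Vert_{L^2(I)}\gtrsim\Vert f\Vert_{L^2(Q)}$; on those slices the one‑dimensional Remez inequality applies with exponent $\lesssim M$, and integrating over $x'\in G'$ reduces the statement to its $(n-1)$‑dimensional counterpart on $Q'$. Summing the local inequalities over the good cubes then gives $\tfrac12\Vert f\Vert_{L^2(\mathbb R^n)}^2\leq(C_n/\gamma)^{C_n(1+b)}\Vert f\Vert_{L^2(\omega)}^2$, whence the claim after absorbing constants and recalling $b=LR$.

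\textbf{Main obstacle.} The heart of the matter is the quantitative Remez–Turán inequality in several variables, specifically controlling the growth exponent $M$ through the slicing induction: one must pass to slices on which the local $L^2$‑mass is comparable to the global one, so that $M$ does not deteriorate as the dimension drops. A secondary subtlety is the absence of a cheap local Bernstein inequality on a single cube; this is precisely why goodness is defined through a weighted sum of derivatives, so that the global Bernstein inequality is needed only for the summability that makes the good cubes dominate, while the derivative bounds required for the holomorphic extension are built into the definition of a good cube.
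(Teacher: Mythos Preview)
The paper does not give a proof of this theorem; it is quoted from Kovrijkine's paper and used as a black box in the proof of Theorem~\ref{11112020T1}$(ii)$. Your outline follows Kovrijkine's original strategy faithfully: the scale reduction to $L=1$, the good/bad cube decomposition governed by the weighted derivative sum, the holomorphic extension with growth $e^{C_n(1+b)}$ on good cubes via Sobolev embedding and Taylor expansion, and a Remez--Tur\'an bound on each good cube.

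One point of comparison on the multidimensional Remez step: Kovrijkine does not argue by induction on the dimension via $(n-1)$-dimensional slicing as you sketch, but works pointwise along lines. On a good cube he first locates, by an averaging argument applied to the weighted derivative sum, a point $x_0$ at which all derivatives satisfy $|\partial^\alpha f(x_0)|\le (Cb)^{|\alpha|}\Vert f\Vert_{L^2(Q)}$, and then applies the one-variable Tur\'an lemma successively along axis-parallel segments through $x_0$, one coordinate at a time. This sidesteps exactly the difficulty you flag: in a slicing induction the exponent $M$ can degrade on slices $x'$ where $\Vert f(x',\cdot)\Vert_{L^2(I)}\ll\Vert f\Vert_{L^2(Q)}$, and arranging simultaneously that $\Leb(E_{x'})\gtrsim\gamma$ and that the slice carries comparable $L^2$-mass is not automatic from Fubini alone. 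Working from a fixed base point with $L^\infty$ control along lines keeps $M$ uniform throughout the iteration. Your slicing variant can likely be made to work with extra bookkeeping (discarding low-mass slices and checking they cost only a dimensional constant), but the line-by-line argument in the cited reference is cleaner and is what actually produces the stated exponent $C_n(1+LR)$.
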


\noindent We therefore deduce from this theorem and the definition \eqref{06202020E3} of the orthogonal projection $K_R$ that there exists a positive constant $C=C(\omega)\geq1$ only depending on the thick set $\omega$ (and not on the positive real number $R$) such that for all $t>0$,
\begin{equation}\label{first_term}
	\mu_R\frac{\mathrm d}{\mathrm dt}\Vert K_Rf(t)\Vert^2_{L^2(\mathbb R^n)}\le-2\lambda_R\mu_RC^{-1}e^{-CR}\Vert K_Rf(t)\Vert^2_{L^2(\mathbb R^n)}.
\end{equation}
On the other hand, recalling that the operators $G(\vert D_x\vert)$ and $K_R$ commute, the second term we aim at controlling is given for all $t>0$ by
\begin{multline}\label{second_term}
	\frac{\mathrm d}{\mathrm dt}\Vert(1-K_R)f(t)\Vert^2_{L^2(\mathbb R^n)} = 2\Reelle\langle(1-K_R)f(t),(1-K_R)f'(t)\rangle_{L^2(\mathbb R^n)} \\[5pt]
	= -2\langle(1-K_R)f(t),G(\vert D_x\vert)(1-K_R)f(t)\rangle_{L^2(\mathbb R^n)} -2\lambda_R\Reelle\langle(1-K_R)f(t),(1-K_R)\mathbbm 1_{\omega}K_Rf(t)\rangle_{L^2(\mathbb R^n)}.
\end{multline}
We notice that by definition of the orthogonal projection $K_R$, the Fourier transforms of the functions $(1-K_R)f(t)$ are supported in $\mathbb R^n\setminus B(0,R)$, which implies that for all $t>0$,
\begin{equation}\label{second_term1}
	2\langle(1-K_R)f(t),G(\vert D_x\vert)(1-K_R)f(t)\rangle_{L^2(\mathbb R^n)}\geq2\tilde\alpha_R\Vert(1-K_R)f(t)\Vert^2_{L^2(\mathbb R^n)}.
\end{equation}
By using in addition Cauchy-Schwarz' and Young's inequalities, we obtain
\begin{multline}\label{second_term2}
	-2\lambda_R\Reelle\langle(1-K_R)f(t),(1-K_R)\mathbbm 1_{\omega}K_Rf(t)\rangle_{L^2(\mathbb R^n)}\le2\lambda_R\Vert(1-K_R)f(t)\Vert_{L^2(\mathbb R^n)}\Vert K_Rf(t)\Vert_{L^2(\mathbb R^n)} \\
	\leq\frac{\lambda_R^2}{\tilde\alpha_R}\Vert K_R f(t)\Vert^2_{L^2(\mathbb R^n)} + \tilde\alpha_R\Vert(1-K_R)f(t)\Vert^2_{L^2(\mathbb R^n)}.
\end{multline}
Combining the estimates \eqref{second_term}, \eqref{second_term1} and \eqref{second_term2}, we obtain that for all $t>0$,
$$\frac{\mathrm d}{\mathrm dt}\Vert(1-K_R)f(t)\Vert^2_{L^2(\mathbb R^n)}\le\frac{\lambda_R^2}{\tilde\alpha_R}\Vert K_R f(t)\Vert^2_{L^2(\mathbb R^n)} - \tilde\alpha_R\Vert(1-K_R)f(t)\Vert^2_{L^2(\mathbb R^n)}.$$
This inequality and \eqref{first_term} then imply that for all $t>0$,
$$\frac{\mathrm d}{\mathrm dt}V(f(t))\leq-2\bigg(\lambda_R\mu_RC^{-1}e^{-CR}-\frac{\lambda_R^2}{\tilde\alpha_R}\bigg)\big\Vert K_Rf(t)\big\Vert^2_{L^2(\mathbb R^n)} - \tilde\alpha_R \big\Vert(1-K_R)f(t)\big\Vert^2_{L^2(\mathbb R^n)}.$$
By making the following choices for the constants $\mu_R$ and $\lambda_R$,
\begin{equation*}
	\mu_R= 2C^2 e^{2CR},\quad\lambda_R = C e^{CR}\tilde\alpha_R,
\end{equation*}
we get that for all $t>0$,
$$\frac{\mathrm d}{\mathrm dt}V(f(t))\le-\tilde\alpha_RV(f(t)).$$
This latest estimate and Gr\"onwall's inequality readily imply that for all $t\geq0$,
$$V(f(t))\le e^{-\tilde\alpha_R t}V(f(0)),$$
and then, by Pythagore's theorem, since $\mu_R \geq 1$, we obtain that for all $t\geq0$,
$$\Vert f(t)\Vert^2_{L^2(\mathbb R^n)}\le\mu_Re^{-\tilde\alpha_R t}\Vert f_0\Vert^2_{L^2(\mathbb R^n)}.$$
Since the set of functions of $L^2(\mathbb R^n)$ with compactly supported Fourier transforms is dense in $L^2(\mathbb R^n)$, and that the evolution operators $e^{-t(G(\vert D_x\vert)+ \lambda_R\mathbbm 1_{\omega}K_R)}$ are continuous on this space, the above estimate is valid for all $f_0\in L^2(\mathbb R^n)$. The estimate \eqref{06112020E4} therefore holds. Recalling the definitions \eqref{04032021E1} and \eqref{04032021E2} of $\tilde\alpha_R$ and $f(t)$ respectively, and also recalling that $G = F-\inf F$, we have established that for all $R\geq R_0$ and $t\geq0$,
$$\big\Vert e^{-t(F(\vert D_x\vert)+Ce^{CR}(\alpha_R-\inf F)\mathbbm 1_{\omega}K_R)}\big\Vert^2_{\mathcal L(L^2(\mathbb R^n))}\le 2C^2e^{2CR}e^{-(\alpha_R+\inf F)t},$$
with $\alpha_R = \inf_{r\geq R} F(r)$. This ends the proof of assertion $(ii)$ in Theorem \ref{11112020T1}.

\subsection{Proof of Proposition~\ref{negativ_result}} In this last subsection, we prove Proposition~\ref{negativ_result} which provides a negative result for the rapid stabilization of the evolution equation \eqref{01062020E1}. We assume that $\omega$ is not dense in $\mathbb R^n$, and also that $\lim_{+\infty}F$ exists and is a non-negative real number $L\geq0$. Since the function $F$ is continuous, this implies that $F$ is bounded, i.e. $\sup F<+\infty$. We aim at proving that if $\alpha>\sup F$, then the equation \eqref{01062020E1} is not exponentially stabilizable from $\omega$ at rate $\alpha$. To that end, we will use the following interpretation of exponential stabilization at rate $\alpha>0$ in terms of observability.

\begin{prop}[Theorem 1.1 in \cite{LWXY}]\label{18122020T2} If the evolution system \eqref{01062020E1} is exponentially stabilizable from $\omega$ at rate $\alpha >0$, then there exists a positive constant $A_{\alpha}>0$ such that for all $T>0$, there exists a positive constant $C_{\alpha, T} >0$ satisfying that for all $g\in L^2(\mathbb R^n)$,
$$\big\Vert e^{-TF(\vert D_x\vert)}g\big\Vert^2_{L^2(\mathbb R^n)}\le C_{\alpha,T} \int_0^T\big\Vert e^{-tF(\vert D_x\vert)}g\big\Vert^2_{L^2(\omega)}\ \mathrm dt 
+ A_{\alpha} e^{-2\alpha T}\Vert g\Vert^2_{L^2(\mathbb R^n)}.$$
\end{prop}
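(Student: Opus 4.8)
The plan is to follow the classical two-step route linking stabilizability at a prescribed rate to a weak observability inequality. First, from the exponential stabilizability at rate $\alpha$ I will build, for each $T>0$, an \emph{open-loop} control of cost bounded independently of $T$ that drives any initial datum to a final state of size at most $M_\alpha e^{-\alpha T}$ times the initial norm. Second, I will dualize this (approximate) controllability statement, feeding the test function $e^{-TF(\vert D_x\vert)}\phi$ into the duality pairing so as to recover exactly the left-hand side of the claimed estimate.

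Write $S(t)=e^{-tF(\vert D_x\vert)}$ for the free semigroup and $A_\alpha=F(\vert D_x\vert)+\mathbbm 1_\omega K_\alpha$ for the closed-loop generator, so that $\Vert e^{-tA_\alpha}\Vert_{\mathcal L(L^2(\mathbb R^n))}\le M_\alpha e^{-\alpha t}$ for all $t\ge 0$. Given $f_0\in L^2(\mathbb R^n)$, set $f(t)=e^{-tA_\alpha}f_0$ and $h(t)=-\mathbbm 1_\omega K_\alpha f(t)\in L^2(\omega)$. Since $\mathbbm 1_\omega K_\alpha$ is a bounded perturbation of $F(\vert D_x\vert)$, the variation-of-constants (Duhamel) formula gives $f(t)=S(t)f_0+\int_0^t S(t-s)\mathbbm 1_\omega h(s)\,\mathrm ds$, so $f$ is precisely the mild solution of \eqref{01062020E1} associated with the control $h$. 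The closed-loop decay yields $\Vert h(t)\Vert_{L^2(\omega)}\le \Vert K_\alpha\Vert M_\alpha e^{-\alpha t}\Vert f_0\Vert_{L^2(\mathbb R^n)}$, hence $\int_0^T\Vert h(t)\Vert^2_{L^2(\omega)}\,\mathrm dt\le C_\alpha\Vert f_0\Vert^2_{L^2(\mathbb R^n)}$ with $C_\alpha=\Vert K_\alpha\Vert^2 M_\alpha^2/(2\alpha)$ independent of $T$, while $\Vert f(T)\Vert_{L^2(\mathbb R^n)}\le M_\alpha e^{-\alpha T}\Vert f_0\Vert_{L^2(\mathbb R^n)}$.

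For the dualization, let $B:L^2(\omega)\to L^2(\mathbb R^n)$ be the extension by zero, so that $B^*$ is the restriction to $\omega$, and set $(L_T^*\phi)(s)=B^*S(T-s)^*\phi$; since the symbol $F(\vert\xi\vert)$ is real, $S(t)^*=S(t)$, and therefore $\Vert L_T^*\phi\Vert^2_{L^2((0,T)\times\omega)}=\int_0^T\Vert S(t)\phi\Vert^2_{L^2(\omega)}\,\mathrm dt$. Pairing the identity $f(T)=S(T)f_0+\int_0^T S(T-s)Bh(s)\,\mathrm ds$ with any $\phi\in L^2(\mathbb R^n)$ gives $\langle f_0,S(T)^*\phi\rangle_{L^2(\mathbb R^n)}=\langle f(T),\phi\rangle_{L^2(\mathbb R^n)}-\langle h,L_T^*\phi\rangle_{L^2((0,T)\times\omega)}$, whence by Cauchy--Schwarz and the two bounds above,
\[
\big\vert\langle f_0,S(T)^*\phi\rangle_{L^2(\mathbb R^n)}\big\vert\le M_\alpha e^{-\alpha T}\Vert f_0\Vert_{L^2(\mathbb R^n)}\Vert\phi\Vert_{L^2(\mathbb R^n)}+\sqrt{C_\alpha}\,\Vert f_0\Vert_{L^2(\mathbb R^n)}\Vert L_T^*\phi\Vert_{L^2((0,T)\times\omega)}.
\]
Applying this with $f_0=S(T)^*\phi=S(T)\phi$ and dividing by $\Vert S(T)\phi\Vert_{L^2(\mathbb R^n)}$ (the case $S(T)\phi=0$ being trivial) yields $\Vert S(T)\phi\Vert_{L^2(\mathbb R^n)}\le M_\alpha e^{-\alpha T}\Vert\phi\Vert_{L^2(\mathbb R^n)}+\sqrt{C_\alpha}\,\Vert L_T^*\phi\Vert_{L^2((0,T)\times\omega)}$; squaring and using $(a+b)^2\le 2a^2+2b^2$ gives the claimed inequality with $C_{\alpha,T}=2C_\alpha$ (in fact independent of $T$) and $A_\alpha=2M_\alpha^2$.

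The only genuinely delicate point is the first step: one must verify that the closed-loop trajectory $t\mapsto e^{-tA_\alpha}f_0$ is indeed an open-loop mild solution of \eqref{01062020E1} driven by an $L^2((0,T)\times\omega)$ control — this is the Duhamel formula for the bounded perturbation $\mathbbm 1_\omega K_\alpha$ — and that the resulting control cost is bounded \emph{uniformly in} $T$, which is exactly where the exponential decay of the closed-loop semigroup enters. The dualization is then routine HUM-type duality, the one slightly clever move being to substitute $f_0=S(T)^*\phi$ into the controllability inequality so that its left-hand side becomes $\Vert e^{-TF(\vert D_x\vert)}\phi\Vert^2_{L^2(\mathbb R^n)}$.
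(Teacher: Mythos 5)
Your proof is correct, and at its core it is the same argument as the paper's: both rest on the Duhamel formula for the bounded perturbation $\mathbbm 1_{\omega}K_{\alpha}$, the decay $\Vert e^{-t(F(\vert D_x\vert)+\mathbbm 1_{\omega}K_{\alpha})}\Vert_{\mathcal L(L^2(\mathbb R^n))}\le M_{\alpha}e^{-\alpha t}$, the self-adjointness of $e^{-tF(\vert D_x\vert)}$, and the same key move of substituting $e^{-TF(\vert D_x\vert)}g$ as the datum in the duality pairing. The difference is in the packaging and the resulting constant. The paper pairs $g$ directly against the Duhamel identity for $e^{-T(F(\vert D_x\vert)+\mathbbm 1_{\omega}K_{\alpha})}f$, bounds the feedback term by $M_{\alpha}\Vert K_{\alpha}\Vert\,\Vert f\Vert\int_0^T\Vert e^{-tF(\vert D_x\vert)}g\Vert_{L^2(\omega)}\,\mathrm dt$ (discarding the factor $e^{-\alpha t}$), and only afterwards passes from the $L^1$-in-time to the $L^2$-in-time norm by H\"older, which costs a factor $T$ and gives $C_{\alpha,T}=2M_{\alpha}^2\Vert K_{\alpha}\Vert^2T$, $A_{\alpha}=2M_{\alpha}^2$. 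You instead first record the closed loop as an open-loop control $h(t)=-\mathbbm 1_{\omega}K_{\alpha}e^{-t(F(\vert D_x\vert)+\mathbbm 1_{\omega}K_{\alpha})}f_0$, keep the decay $e^{-\alpha t}$ to bound its $L^2((0,T)\times\omega)$ cost by $M_{\alpha}^2\Vert K_{\alpha}\Vert^2/(2\alpha)$ uniformly in $T$, and then apply Cauchy--Schwarz directly in $L^2((0,T)\times\omega)$; this yields the same $A_{\alpha}=2M_{\alpha}^2$ but a constant $C_{\alpha,T}=M_{\alpha}^2\Vert K_{\alpha}\Vert^2/\alpha$ independent of $T$, a mild quantitative sharpening (irrelevant for the use made of the proposition, but not wrong). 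All the delicate points are handled: the identification of the closed-loop trajectory as a mild solution is exactly the variation-of-constants formula the paper cites from Engel--Nagel, the change of variables giving $\int_0^T\Vert e^{-(T-s)F(\vert D_x\vert)}\phi\Vert^2_{L^2(\omega)}\,\mathrm ds=\int_0^T\Vert e^{-tF(\vert D_x\vert)}\phi\Vert^2_{L^2(\omega)}\,\mathrm dt$ is legitimate, and the division by $\Vert e^{-TF(\vert D_x\vert)}\phi\Vert_{L^2(\mathbb R^n)}$ is correctly restricted to the nondegenerate case.
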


The proof of Proposition~\ref{18122020T2} is contained in the proof of Theorem 1.1 in \cite{LWXY}, although \cite{LWXY} (Theorem 1.1) only states characterizations of complete stabilization. For the sake of completeness, we recall the arguments given by the authors of \cite{LWXY} in Section \ref{appendix}. 

Proceeding by contradiction, we consider $\alpha>\sup F$ and assume that the equation \eqref{01062020E1} is exponentially stabilizable at rate $\alpha$ from $\omega$. According to Proposition~\ref{18122020T2}, there exists a positive constant $A_{\alpha}>0$ such that for all $T>0$, there exists a positive constant $C_{\alpha, T} >0$ such that for all $g\in L^2(\mathbb R^n)$,
\begin{equation}\label{observability_rapid_stab}
	\big\Vert e^{-TF(\vert D_x\vert)}g\big\Vert^2_{L^2(\mathbb R^n)}\le C_{\alpha,T} \int_0^T\big\Vert e^{-tF(\vert D_x\vert)}g\big\Vert^2_{L^2(\omega)}\ \mathrm dt 
	+ A_{\alpha} e^{-2\alpha T}\Vert g\Vert^2_{L^2(\mathbb R^n)}.
\end{equation}
Since we get that for all $T>0$ and $g \in L^2(\mathbb R^n)$,
$$\big\Vert e^{-TF(\vert D_x\vert)}g\big\Vert_{L^2(\mathbb R^n)} \geq e^{-T \sup F} \Vert g\Vert_{L^2(\mathbb R^n)},$$
it follows from \eqref{observability_rapid_stab} that for all $T>0$ and $g \in L^2(\mathbb R^n)$,
\begin{equation}\label{observability_rapid_stab2}
	\Vert g\Vert^2_{L^2(\mathbb R^n)}\le C_{\alpha,T}e^{2T\sup F} \int_0^T\big\Vert e^{-tF(\vert D_x\vert)}g\big\Vert^2_{L^2(\omega)}\ \mathrm dt 
	+ A_{\alpha} e^{2(\sup F-\alpha) T}\Vert g\Vert^2_{L^2(\mathbb R^n)}.
\end{equation}
Notice that since $\sup F - \alpha <0$, we have $\lim_{T \to +\infty} e^{2(\sup F-\alpha) T}=0$. Therefore, there exists $T_0>0$ such that $A_{\alpha}e^{2(\sup F-\alpha)T_0}<1/2$. This fact, together with \eqref{observability_rapid_stab2}, imply that for all $g \in L^2(\mathbb R^n)$,
\begin{equation}\label{observability_rapid_stab3}
	\Vert g\Vert^2_{L^2(\mathbb R^n)}\le 2C_{\alpha,T_0}e^{2T_0\sup F}\int_0^{T_0}\big\Vert e^{-tF(\vert D_x\vert)}g\big\Vert^2_{L^2(\omega)}\ \mathrm dt.
\end{equation}
Since $\omega$ is not dense in $\mathbb R^n$ and that the evolution equation \eqref{01062020E1} is invariant under translations, we can assume that there exists a positive radius $r>0$ such that $B(0, r) \subset \mathbb R^n \setminus \omega$. Let us fix a non-zero $L^2$-function $\psi$ and define the function $g_h$ for all $h >0$ by
$$\forall x \in \mathbb R^n, \quad g_h(x) = \psi\Big(\frac xh\Big).$$
On the one hand, we have that for all $h>0$,
\begin{equation}\label{gh1}
	\Vert g_h\Vert^2_{L^2(\mathbb R^n)} = h^n \Vert\psi\Vert^2_{L^2(\mathbb R^n)}.
\end{equation}
On the other hand, we get that for all $h>0$,
\begin{align}\label{gh2}
	\int_0^{T_0}\big\Vert e^{-tF(\vert D_x\vert)}g_h\big\Vert^2_{L^2(\omega)}\ \mathrm dt 
	& \le \int_0^{T_0}\big\Vert e^{-tF(\vert D_x\vert)}g_h\big\Vert^2_{L^2(\mathbb R^n \setminus B(0,r))}\ \mathrm dt \\[5pt]
	& = h^n\int_0^{T_0}\big\Vert e^{-tF(\vert D_x\vert/h)}\psi\big\Vert^2_{L^2(\mathbb R^n \setminus B(0,r/h))}\ \mathrm dt. \nonumber
\end{align}
It follows from \eqref{observability_rapid_stab3}, \eqref{gh1} and \eqref{gh2} that for all $h>0$,
$$\Vert\psi\Vert^2_{L^2(\mathbb R^n)}\le2C_{\alpha,T_0}e^{2T_0\sup F}\int_0^{T_0}\big\Vert e^{-tF(\vert D_x\vert/h)}\psi\big\Vert^2_{L^2(\mathbb R^n \setminus B(0,r/h))}\ \mathrm dt.$$
Moreover, since $L=\lim_{+\infty}F\geq0$, we get
\begin{align*}
	&\ \int_0^{T_0}\big\Vert e^{-tF(\vert D_x\vert/h)}\psi\big\Vert^2_{L^2(\mathbb R^n \setminus B(0,r/h))}\ \mathrm dt \\
	\le &\ 2\int_0^{T_0}\big\Vert e^{-tF(\vert D_x\vert/h)}\psi - e^{-tL}\psi\big\Vert^2_{L^2(\mathbb R^n \setminus B(0,r/h))}\ \mathrm dt + 2\int_0^{T_0}\big\Vert e^{-tL}\psi\big\Vert^2_{L^2(\mathbb R^n \setminus B(0,r/h))}\ \mathrm dt \\
	\le &\ 2\int_0^{T_0}\big\Vert e^{-tF(\vert D_x\vert/h)}\psi - e^{-tL}\psi\big\Vert^2_{L^2(\mathbb R^n)}\ \mathrm dt + 2T_0\Vert\psi\Vert^2_{L^2(\mathbb R^n \setminus B(0,r/h))},
\end{align*}
and this implies that
\begin{multline*}
	\Vert\psi\Vert^2_{L^2(\mathbb R^n)} \leq 4C_{\alpha,T_0}e^{2T_0\sup F} \int_0^{T_0} \big\Vert e^{-tF(\vert D_x\vert/h)}\psi - e^{-tL}\psi\big\Vert^2_{L^2(\mathbb R^n)}\ \mathrm dt \\
	+ 4C_{\alpha,T_0}T_0e^{2T_0\sup F}\Vert\psi\Vert^2_{L^2(\mathbb R^n \setminus B(0,r/h))}.
\end{multline*}
To obtain a contradiction, it remains to check that each term of the right-hand side of the above inequality converges to $0$ as $h\rightarrow0^+$. For the first term, we deduce from Plancherel's theorem and the dominated convergence theorem (recall that the function $F$ is bounded from below) that
$$\int_0^{T_0} \big\Vert e^{-tF(\vert D_x\vert/h)} \psi - e^{-tL}\psi \big\Vert^2_{L^2(\mathbb R^n)}\ \mathrm dt
= \frac1{(2\pi)^n}\int_0^{T_0}\big\Vert e^{-tF(\vert\xi\vert/h)}\widehat{\psi} - e^{-tL}\widehat{\psi}\big\Vert^2_{L^2(\mathbb R^n)}\ \mathrm dt \underset{h\rightarrow0}{\rightarrow}0.$$
For the second one, the dominated convergence theorem readily implies that
$$\Vert\psi\Vert^2_{L^2(\mathbb R^n \setminus B(0,r/h))} \underset{h\rightarrow0}{\rightarrow} 0.$$
This concludes the proof of Proposition~\ref{negativ_result}.

\section{Quasi-analytic sequences}\label{QA}

This section is devoted to recall some properties of quasi-analytic sequences. 

\subsection{Notion of quasi-analytic sequence} Let us first recall that a sequence $(M_k)_{k\geq0}$ of non-negative real numbers is said to be \textit{log-convex} when it satisfies
\begin{equation}\label{log_conv}
	\forall k\geq1,\quad M_k^2 \le M_{k+1} M_{k-1}.
\end{equation}
A relevant example of log-convex sequence is the sequence $\mathcal M^F$ associated with any continuous function $F : [0, +\infty)\rightarrow\mathbb R$ bounded from below. Let us recall that the elements $M^F_k$ of the sequence $\mathcal M^F$ are assumed to be positive real numbers and defined for all $k\geq0$ by 
\begin{equation}\label{ex_qa_sequence}
	0<M^F_k = \sup_{r\geq0} r^ke^{-F(r)}<+\infty.
\end{equation}
Let us also recall that a sequence $\mathcal M=(M_k)_{k\geq0}$ of positive real numbers is called quasi-analytic when for all real numbers $a<b$, the associated Denjoy-Carleman class
$$\mathcal C_{\mathcal M}([a,b])= \big\{ f \in\mathcal C^{\infty}([a,b],\mathbb C) : \forall k\geq0,\forall x\in[a,b],\ \vert f^{(k)}(x)\vert \le M_k\big\},$$
is quasi-analytic, meaning that any function belonging to this class is identically equal to zero when satisfying
$$\exists x_0\in[a,b],\forall k\geq0,\quad f^{(k)}(x_0) = 0.$$
There exist several necessary and sufficient equivalent conditions that ensure a log-convex sequence to be quasi-analytic. In this work, we will make use of the following one, which is a particular case of Denjoy-Carleman's theorem, see e.g. \cite{Koosis} (Theorem p. 91).

\begin{thm}[Denjoy-Carleman]\label{Den_Carl_thm} A log-convex sequence $(M_k)_{k\geq0}$ of positive real numbers is quasi-analytic if and only if 
$$\sum_{k=0}^{+\infty}\frac{M_k}{M_{k+1}} = + \infty.$$
\end{thm}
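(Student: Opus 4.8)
This is the classical Denjoy--Carleman theorem, so a complete proof is somewhat long; the paper legitimately defers to Koosis, but here is the route I would follow. Throughout I normalize $M_0 = 1$ (multiplying all $M_k$ by a fixed constant changes neither quasi-analyticity nor the convergence of $\sum_k M_k/M_{k+1}$), and I use that log-convexity, i.e. $M_k^2 \le M_{k-1}M_{k+1}$, is equivalent to the ratios $a_k := M_k/M_{k+1}$ being non-increasing, whence the telescoping identity $a_0 a_1 \cdots a_{k-1} = M_0/M_k = 1/M_k$. The two implications are treated separately.

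\textbf{From $\sum_k M_k/M_{k+1} < +\infty$ to non-quasi-analyticity.} Here $\ell := \sum_k a_k < +\infty$. I introduce the probability densities $\varphi_k := a_k^{-1}\mathbbm 1_{[-a_k/2,\, a_k/2]}$, with Fourier transforms $\widehat{\varphi_k}(\xi) = \mathrm{sinc}(a_k\xi/2)$, and form the infinite convolution $\Phi := \varphi_0 * \varphi_1 * \varphi_2 * \cdots$. One checks that $\Phi$ is a well-defined non-negative function, not identically zero (since $\int_{\mathbb R}\Phi = 1$), supported in $[-\ell/2, \ell/2]$, with $\widehat\Phi(\xi) = \prod_{k\ge0}\mathrm{sinc}(a_k\xi/2)$. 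The point is the derivative control: bounding $|\mathrm{sinc}(t)| \le \min(1, 1/|t|)$ and keeping the first $k$ factors gives, via the telescoping identity, $|\widehat\Phi(\xi)| \le (2^k M_k)/|\xi|^k$ for every $k\ge1$, together with $|\widehat\Phi(\xi)| \le 1$; integrating $|\xi|^k|\widehat\Phi(\xi)|$ and splitting at a suitable power yields $\|\Phi^{(k)}\|_{L^\infty(\mathbb R)} \le A\, C^k M_k$ for constants $A, C$ independent of $k$. Rescaling $\Phi$ in the space variable by $C$ divides the $k$-th derivative by $C^k$, and a further division by $A$ then produces a nonzero $f \in \mathcal C_{\mathcal M}([a,b])$, where $[a,b]$ is the (compact, nondegenerate) support of $f$; since all derivatives of $f$ vanish at the endpoints of $[a,b]$, the class $\mathcal C_{\mathcal M}([a,b])$ is not quasi-analytic.

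\textbf{From $\sum_k M_k/M_{k+1} = +\infty$ to quasi-analyticity.} Let $f \in \mathcal C_{\mathcal M}([a,b])$ vanish to infinite order at some $x_0 \in [a,b]$; I must show $f \equiv 0$. I would pass through the trace function $T(r) := \sup_{k\ge0} r^k/M_k$ and use two standard facts, both valid for log-convex $(M_k)$: first, the arithmetic condition $\sum_k M_k/M_{k+1} = +\infty$ is equivalent to the integral condition $\int_1^{+\infty} r^{-2}\log T(r)\,\mathrm dr = +\infty$ (an elementary comparison of the series with the integral, using monotonicity of the $a_k$); second, divergence of this logarithmic integral forces quasi-analyticity. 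The second fact is the analytic heart: after localizing $f$ near $x_0$, its Fourier--Laplace transform is analytic in a half-plane and the bounds $|f^{(k)}| \le M_k$ translate into the pointwise decay $|\widehat f(\xi)| \le 1/T(|\xi|)$, so that $\int \log|\widehat f(\xi)|\,(1+\xi^2)^{-1}\,\mathrm d\xi = -\infty$; the uniqueness theorem for functions whose logarithm has divergent Poisson integral then gives $\widehat f \equiv 0$, hence $f \equiv 0$. Alternatively one can run Bang's elementary telescoping argument directly on the quantities $\|f^{(k)}\|_{L^\infty}/M_k$, bypassing complex analysis entirely.

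\textbf{Main obstacle.} The genuine difficulty is the last fact --- converting the purely arithmetic divergence of $\sum_k M_k/M_{k+1}$ into rigidity for $C^\infty$ functions with the prescribed derivative bounds. The equivalence with the logarithmic-integral form is routine, and the non-quasi-analytic direction is an explicit (if delicate) construction; but the quasi-analyticity statement requires either Bang's combinatorial lemma or the classical harmonic-analysis input on quasi-analytic classes of bounded analytic functions. Since all of this is standard, in practice one simply cites Koosis, \emph{The logarithmic integral}, as the paper does.
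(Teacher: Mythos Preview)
Your sketch is correct and follows the classical route to the Denjoy--Carleman theorem: the convergent case via an explicit infinite-convolution bump function, and the divergent case via the trace function $T(r)=\sup_k r^k/M_k$, the equivalence $\sum_k M_k/M_{k+1}=+\infty \iff \int_1^{+\infty} r^{-2}\log T(r)\,\mathrm dr=+\infty$, and then either the harmonic-analysis uniqueness theorem or Bang's inequality. The paper, for its part, gives no proof at all: it simply states the result as a particular case of Denjoy--Carleman and cites Koosis, \emph{The Logarithmic Integral I} (Theorem p.~91). So there is nothing to compare beyond noting that you have supplied what the paper deliberately omits; your closing remark that ``in practice one simply cites Koosis, as the paper does'' is exactly right.
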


For all connected open set $U\subset\mathbb R^n$, let us consider the more general Denjoy-Carleman class associated with a sequence $\mathcal M$ of positive real numbers:
$$\mathcal C_{\mathcal M}(U)= \big\{ f \in\mathcal C^{\infty}(U,\mathbb C) : \forall \beta\in\mathbb N^n,\ \Vert\partial_x^{\beta}f\Vert_{L^{\infty}(U)} \le  M_{\vert\beta\vert}\big\}.$$
We check in the following proposition that when the sequence $\mathcal M$ is quasi-analytic, then all the associated classes $C_{\mathcal M}(U)$ are also quasi-analytic.

\begin{prop}\label{23122020P1} Let $U\subset\mathbb R^n$ be a connected open set and $\mathcal M$ be a quasi-analytic sequence of positive real numbers. Then, the associated class $C_{\mathcal M}(U)$ is quasi-analytic, meaning that any function $f$ belonging to this class that vanishes at infinite order in a point of the set $U$ is identically equal to zero.
\end{prop}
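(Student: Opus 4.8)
The plan is to reduce the multivariate statement to the one-dimensional quasi-analyticity hypothesis, which is exactly Denjoy--Carleman's theorem (Theorem~\ref{Den_Carl_thm}) applied to the class $\mathcal C_{\mathcal M}([a,b])$. Suppose $f\in\mathcal C_{\mathcal M}(U)$ vanishes at infinite order at some point $x_0\in U$, meaning $\partial_x^\beta f(x_0)=0$ for all $\beta\in\mathbb N^n$. I first introduce the set $Z$ of points of $U$ at which $f$ vanishes at infinite order; by continuity of all derivatives, $Z$ is closed in $U$, and it is nonempty since $x_0\in Z$. The strategy is to show $Z$ is also open in $U$; since $U$ is connected, this forces $Z=U$, hence $f\equiv0$ on $U$.

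To prove openness of $Z$, fix $y\in Z$ and choose $\rho>0$ with $\overline{B(y,\rho)}\subset U$. For any unit vector $e\in\mathbb S^{n-1}$, consider the one-variable function $g(s)=f(y+se)$ for $s\in[-\rho,\rho]$. The chain rule gives $g^{(k)}(s)=\sum_{|\beta|=k}\binom{k}{\beta}e^\beta\,(\partial_x^\beta f)(y+se)$, so that $|g^{(k)}(s)|\le\big(\sum_{|\beta|=k}\binom{k}{\beta}|e^\beta|\big)\max_{|\beta|=k}\|\partial_x^\beta f\|_{L^\infty(U)}\le n^{k}M_k$ using the multinomial bound $\sum_{|\beta|=k}\binom{k}{\beta}=n^k$ and $|e^\beta|\le1$. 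Hence $g$ lies in the Denjoy--Carleman class $\mathcal C_{\widetilde{\mathcal M}}([-\rho,\rho])$ associated with $\widetilde M_k=n^kM_k$. The key point is that the rescaled sequence $\widetilde{\mathcal M}=(n^kM_k)_{k\ge0}$ remains log-convex (multiplying a log-convex sequence by $c^k$ preserves \eqref{log_conv}) and remains quasi-analytic, because $\sum_k\widetilde M_k/\widetilde M_{k+1}=\tfrac1n\sum_k M_k/M_{k+1}=+\infty$, so Theorem~\ref{Den_Carl_thm} applies to $\widetilde{\mathcal M}$. Since $y\in Z$, all derivatives $g^{(k)}(0)=\sum_{|\beta|=k}\binom{k}{\beta}e^\beta(\partial_x^\beta f)(y)$ vanish, so quasi-analyticity of $\mathcal C_{\widetilde{\mathcal M}}([-\rho,\rho])$ forces $g\equiv0$ on $[-\rho,\rho]$, i.e. $f$ vanishes on the whole segment $\{y+se:|s|\le\rho\}$. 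Letting $e$ range over $\mathbb S^{n-1}$ shows $f\equiv0$ on $B(y,\rho)$, hence all partial derivatives of $f$ vanish identically on $B(y,\rho)$; in particular $B(y,\rho)\subset Z$, proving $Z$ is open.

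The remaining subtlety is to make sure that vanishing of $g$ on each radial segment implies not just $f=0$ on $B(y,\rho)$ but that $y$ is an \emph{interior} point of $Z$, which requires knowing the derivatives $\partial_x^\beta f$ all vanish on that ball --- but this is immediate once $f\equiv0$ on the open ball $B(y,\rho)$, since then every partial derivative vanishes there by differentiating the zero function. I expect the only real care needed is the bookkeeping in the chain-rule estimate and the verification that log-convexity and the Denjoy--Carleman series condition are stable under the rescaling $M_k\mapsto n^kM_k$; both are elementary. Once $Z$ is shown to be nonempty, open and closed in the connected set $U$, we conclude $Z=U$, so $f$ is identically zero on $U$, which is the claim.
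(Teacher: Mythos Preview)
Your proof is correct and follows the same connectedness argument as the paper: the set of infinite-order zeros is nonempty, closed, and open in $U$, with openness established by restricting $f$ to radial line segments and applying one-dimensional quasi-analyticity. The only cosmetic difference is that the paper parametrizes each segment with a direction vector of norm $<1/2$ so as to land directly in $\mathcal C_{\mathcal M}$, whereas you use unit directions, obtain the bound $n^kM_k$, and then verify via Theorem~\ref{Den_Carl_thm} that the rescaled sequence $(n^kM_k)_k$ is still quasi-analytic; your bookkeeping is arguably cleaner, and you also make explicit the passage from $f\equiv0$ on $B(y,\rho)$ to $B(y,\rho)\subset Z$, which the paper leaves implicit.
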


\begin{proof} This proof is based on elementary topology arguments. Let $f\in C_{\mathcal M}(U)$ vanishing at infinite order in a point of the set $U$, that is,
$$\exists x_0\in U,\forall\beta\in\mathbb N^n,\quad \partial^{\beta}_xf(x_0) = 0.$$
We set 
\begin{equation}\label{17122020E1}
	\Omega = \big\{x\in U : \forall\beta\in\mathbb N^n,\ \partial^{\beta}_xf(x) = 0\big\}.
\end{equation}
The set $\Omega$ is non-empty and closed in $U$, since $f$ is a smooth function. Let us check that this is an open subset of $U$. Since $U$ is connected this will imply that $\Omega = U$ and therefore that the function $f$ is identically equal to zero. Let $x\in\Omega$. Since $U$ is open, there exists a radius $r>0$ such that $\overline{B(x,r)}\subset U$. Fixing $y\in B(x,r)$, we consider the function $g_y:[-2r,2r]\rightarrow\mathbb C$ defined by
$$g_y(t) = f\bigg(x + \frac{t(y-x)}{2r}\bigg),\quad t\in[-2r,2r].$$
Since $f\in C_{\mathcal M}(U)$, we get that $g_y\in C_{\mathcal M}([-2r,2r])$. Indeed, it follows from a direct computation that for all $k\geq0$ and $-2r\le t\le 2r$,
\begin{align*}
	\big\vert g_y^{(k)}(t)\big\vert & = \bigg\vert\mathrm d^kf\bigg(x + \frac{t(y-x)}{2r}\bigg)\cdot\bigg(\frac{y-x}{2r},\ldots,\frac{y-x}{2r}\bigg)\bigg\vert \\
	& \le \sum_{1\le i_1,\ldots,i_k\le n}\bigg\vert\frac{\partial^kf}{\partial_{x_{i_1}}\ldots\partial_{x_{i_k}}}\bigg(x + \frac{t(y-x)}{2r}\bigg)\bigg\vert\bigg\vert\bigg(\frac{y-x}{2r}\bigg)_{i_1}\bigg\vert\ldots\bigg\vert\bigg(\frac{y-x}{2r}\bigg)_{i_k}\bigg\vert \\[5pt]
	& \le\bigg(\sum_{1\le i_1,\ldots,i_k\le n}\frac1{2^{i_1+\ldots+i_k}}\bigg)M_k \le\bigg(\sum_{j=1}^{+\infty}\frac1{2^j}\bigg)^kM_k =  M_k.
\end{align*}
Moreover, since $x\in\Omega$, we deduce from the definition \eqref{17122020E1} of the set $\Omega$ that 
$$\forall k\geq0,\quad g^{(k)}_y(0) = 0.$$ 
The sequence $\mathcal M$ being quasi-analytic by assumption, this implies that $g$ is identically equal to zero on $[-2r,2r]$, and so $f(y) = g(2r) = 0$. The set $\Omega$ is then open in $U$.
\end{proof}

We are now in position to state the main proposition of this subsection, which is instrumental in the proof of Theorem~\ref{21102020T1}. This result is established in the work \cite{JM}.

\begin{prop}[Corollary 2.8  in \cite{JM}]\label{15122020P1} Let $\mathcal M$ be a log-convex quasi-analytic sequence, $t>0$ be a positive real number and $\omega \subset (0,1)^n$ be a measurable set. If $\Leb(\omega)\geq\lambda$ with $0< \lambda\leq1$, then there exists a positive constant $C(\lambda, t, \mathcal M, n) >0$ such that for all $f\in\mathcal C_{\mathcal M}((0,1)^n)$ satisfying $\Vert f\Vert_{L^{\infty}((0,1)^n)}\geq t$,
$$\Vert f \Vert_{L^2((0,1)^n)} \le C(\lambda, t, \mathcal M, n)\Vert f\Vert_{L^2(\omega)}.$$
\end{prop}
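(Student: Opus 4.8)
The plan is to reduce the asserted $L^2$-comparison to a uniform \emph{sublevel-set estimate} for functions of the Denjoy--Carleman class $\mathcal C_{\mathcal M}((0,1)^n)$, and then to deduce that estimate from the quasi-analyticity of such classes, i.e.\ from Proposition~\ref{23122020P1}. Write $\mathcal M = (M_k)_{k\geq0}$. Since the cube $(0,1)^n$ has unit Lebesgue measure, every $f\in\mathcal C_{\mathcal M}((0,1)^n)$ satisfies $\Vert f\Vert_{L^2((0,1)^n)}\le\Vert f\Vert_{L^{\infty}((0,1)^n)}\le M_0$, so it suffices to produce a constant $\rho = \rho(\lambda,t,\mathcal M,n)>0$ such that
\begin{equation}\label{sublevel_estimate}
	\forall f\in\mathcal C_{\mathcal M}((0,1)^n),\quad\Vert f\Vert_{L^{\infty}((0,1)^n)}\geq t\ \Longrightarrow\ \Leb\big(\{x\in(0,1)^n : \vert f(x)\vert<\rho\}\big)<\tfrac\lambda2.
\end{equation}
Indeed, granting \eqref{sublevel_estimate}, any measurable $\omega\subset(0,1)^n$ with $\Leb(\omega)\geq\lambda$ satisfies $\Leb(\omega\cap\{\vert f\vert\geq\rho\})\geq\lambda/2$, hence $\Vert f\Vert^2_{L^2(\omega)}\geq\rho^2\lambda/2$, and combining with $\Vert f\Vert_{L^2((0,1)^n)}\le M_0$ yields the claim with $C(\lambda,t,\mathcal M,n) = M_0(2/\lambda)^{1/2}\rho^{-1}$.

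To establish \eqref{sublevel_estimate} I would argue by contradiction and compactness. If it failed, there would be $\lambda,t$, a quasi-analytic sequence $\mathcal M$ and functions $f_k\in\mathcal C_{\mathcal M}((0,1)^n)$ with $\Vert f_k\Vert_{L^{\infty}((0,1)^n)}\geq t$ but $\Leb(\{\vert f_k\vert<1/k\})\geq\lambda/2$. The defining bounds $\Vert\partial^{\beta}f_k\Vert_{L^{\infty}((0,1)^n)}\le M_{\vert\beta\vert}$ make $(f_k)_k$ and all of its derivatives uniformly bounded and equicontinuous on $\overline{(0,1)^n}$, so by the Arzel\`a--Ascoli theorem and a diagonal extraction a subsequence converges, together with all derivatives and uniformly on the closed cube, to some $f_{\infty}\in\mathcal C_{\mathcal M}((0,1)^n)$. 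Passing to the limit gives $\Vert f_{\infty}\Vert_{L^{\infty}((0,1)^n)}\geq t$; moreover, for each $\varepsilon>0$ one has $\{\vert f_k\vert<1/k\}\subset\{\vert f_{\infty}\vert<2\varepsilon\}$ once $k$ is large along the subsequence, so $\Leb(\{\vert f_{\infty}\vert<2\varepsilon\})\geq\lambda/2$, and letting $\varepsilon\to0^+$ (continuity of measure from above on the finite-measure cube) one gets $\Leb(\{f_{\infty}=0\})\geq\lambda/2$. Alternatively one may slice and induct on the dimension, reducing everything to $n=1$, but the direct argument seems cleanest since $\omega$ has disappeared from \eqref{sublevel_estimate}.

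It then remains to show that $f_{\infty}$ vanishes identically, contradicting $\Vert f_{\infty}\Vert_{L^{\infty}((0,1)^n)}\geq t>0$. For this I would use the elementary fact that a $C^1$ function vanishing on a set has vanishing gradient at almost every point of that set: at a density point of the set where the function is differentiable, a non-zero gradient would force the function to be non-zero on a cone of positive density. Applying this successively to $f_{\infty}$, then to each $\partial_{x_i}f_{\infty}$, and so on — there being only countably many multi-indices — one obtains $\partial^{\beta}f_{\infty}=0$ almost everywhere on $\{f_{\infty}=0\}$ for every $\beta\in\mathbb N^n$, hence a point $x_0$ in the \emph{open} cube $(0,1)^n$ at which $f_{\infty}$ vanishes to infinite order. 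Since $(0,1)^n$ is connected and open and $\mathcal M$ is quasi-analytic, Proposition~\ref{23122020P1} forces $f_{\infty}\equiv0$, which is the sought contradiction; this proves \eqref{sublevel_estimate} and with it the proposition.

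The hard part is \eqref{sublevel_estimate} itself — the statement that a function of a quasi-analytic class cannot be uniformly tiny on a large portion of the cube unless it is tiny everywhere, the relevant scale being fixed by $t$ and by the sequence $\mathcal M$. The compactness route above is the quickest way to \emph{existence} of $\rho$, but it gives no control on how $\rho$ degenerates with $\lambda$, $t$ and $\mathcal M$; recovering the effective dependence obtained in \cite{JM} instead requires a quantitative Remez-type inequality for Denjoy--Carleman classes, which is the genuinely delicate ingredient. A small but essential point, already visible above, is that the infinite-order zero must be located in the interior of the cube for Proposition~\ref{23122020P1} to apply — which is harmless, since the zero set has positive measure and the boundary is null.
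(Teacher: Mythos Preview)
Your argument is correct. The reduction to the sublevel estimate \eqref{sublevel_estimate} is clean, the Arzel\`a--Ascoli extraction is legitimate because the defining bounds $\Vert\partial^{\beta}f_k\Vert_{L^{\infty}}\le M_{\vert\beta\vert}$ give equicontinuity of every derivative on the closed cube, and the density-point iteration yielding an interior infinite-order zero is the standard mechanism; Proposition~\ref{23122020P1} then closes the contradiction.

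Note, however, that the paper does not supply a proof of this proposition: it is quoted verbatim as Corollary~2.8 of \cite{JM}, with a remark pointing to \cite{NSV} for the quantitative $L^{\infty}$ version in dimension one. So there is no ``paper's own proof'' to compare against, only the cited literature. Your soft compactness route is genuinely different from what is done there: \cite{NSV} and \cite{JM} build a Remez-type inequality for quasi-analytic classes, working through the Bang degree (or a related extremal construction) to bound $\Vert f\Vert_{L^{\infty}}$ by $\Vert f\Vert_{L^{\infty}(\omega)}$ with an \emph{explicit} constant in terms of $\lambda$, $t$ and the sequence $\mathcal M$. Your argument trades this quantitative control for a two-line appeal to Proposition~\ref{23122020P1}; that is exactly the bargain you identify in your last paragraph. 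For the purposes of the present paper --- where only the existence of $C(\lambda,t,\mathcal M,n)$ is ever used --- your proof is entirely sufficient and arguably more transparent.
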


A quantitative version of this result (with $L^{\infty}$ norms) is stated by Nazarov, Sodin and Volberg in \cite{NSV} and this stronger result provides an explicit dependence of $C(\lambda, t, \mathcal M, n)$ with respect to $\lambda$, $t$ and $\mathcal M$ when $n=1$.

\subsection{Examples} Now, we establish useful results to construct quasi-analytic sequences and we provide examples of such sequences. Let us begin with a straightforward general lemma.

\begin{lem}\label{17122020L1} Let $\mathcal M$ and $\mathcal M'$ be two sequences of positive real numbers satisfying $\mathcal M\le\mathcal M'$. Then, if $\mathcal M'$ is a quasi-analytic sequence, so is the sequence $\mathcal M$.
\end{lem}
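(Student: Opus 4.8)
The plan is to use the Denjoy–Carleman characterization (Theorem \ref{Den_Carl_thm}) directly, reducing the quasi-analyticity of $\mathcal M$ to the convergence/divergence of the series $\sum_k M_k/M_{k+1}$. First I would recall that both $\mathcal M = (M_k)_{k\geq0}$ and $\mathcal M' = (M'_k)_{k\geq0}$ are sequences of positive real numbers with $M_k \le M'_k$ for every $k\geq0$, and that $\mathcal M'$ is quasi-analytic by hypothesis. Strictly speaking, Theorem \ref{Den_Carl_thm} is stated for \emph{log-convex} sequences, so to apply it cleanly I would either (a) observe that in all the intended applications the sequences at hand are the log-convex sequences $\mathcal M^F$ of the form \eqref{ex_qa_sequence}, or (b) argue purely from the definition of quasi-analyticity via Denjoy–Carleman classes, which does not require log-convexity. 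I expect the paper intends the straightforward inclusion-of-classes argument, so I would run that: the point is that $\mathcal M \le \mathcal M'$ forces the inclusion of Denjoy–Carleman classes $\mathcal C_{\mathcal M}([a,b]) \subset \mathcal C_{\mathcal M'}([a,b])$ for every $a<b$.

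The key step is this inclusion. Fix real numbers $a<b$ and take any $f \in \mathcal C_{\mathcal M}([a,b])$. By definition, $f \in \mathcal C^\infty([a,b],\mathbb C)$ and satisfies $|f^{(k)}(x)| \le M_k \le M'_k$ for all $k\geq0$ and all $x\in[a,b]$; hence $f \in \mathcal C_{\mathcal M'}([a,b])$. This gives $\mathcal C_{\mathcal M}([a,b]) \subset \mathcal C_{\mathcal M'}([a,b])$. Now suppose $f \in \mathcal C_{\mathcal M}([a,b])$ vanishes at infinite order at some point $x_0 \in [a,b]$, i.e. $f^{(k)}(x_0) = 0$ for all $k\geq0$. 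Then $f$ belongs to $\mathcal C_{\mathcal M'}([a,b])$ and vanishes at infinite order at $x_0$, so the quasi-analyticity of $\mathcal M'$ forces $f \equiv 0$ on $[a,b]$. Since $a<b$ were arbitrary, every class $\mathcal C_{\mathcal M}([a,b])$ is quasi-analytic, which is exactly the assertion that $\mathcal M$ is a quasi-analytic sequence.

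There is essentially no obstacle here — the lemma is a monotonicity statement and the proof is the one-line observation that a smaller bound on derivatives yields a smaller function class, together with the definition of quasi-analyticity. The only subtlety worth flagging is the coherence between the two available formulations: the abstract definition of quasi-analyticity of a sequence (in terms of all its Denjoy–Carleman classes) and the Denjoy–Carleman series criterion of Theorem \ref{Den_Carl_thm}, which presupposes log-convexity. If one prefers to argue through the series criterion, one would instead need $\mathcal M$ to be log-convex and would use that $M_k \le M'_k$ combined with log-convexity to compare $\sum_k M_k/M_{k+1}$ with $\sum_k M'_k/M'_{k+1}$; but this comparison is not automatic from $\mathcal M \le \mathcal M'$ alone, which is precisely why the clean argument goes through the class-inclusion route rather than the series. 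I would therefore present the short class-inclusion proof as above.
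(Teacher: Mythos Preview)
Your proposal is correct and takes essentially the same approach as the paper: the paper's proof is precisely the one-line class-inclusion argument $\mathcal C_{\mathcal M}([a,b]) \subset \mathcal C_{\mathcal M'}([a,b])$ followed by ``and the result follows.'' Your additional commentary on why the series-criterion route via Theorem~\ref{Den_Carl_thm} is not the right way to go here is accurate and a nice clarification, though the paper does not include it.
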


\begin{proof} Since $\mathcal M\le \mathcal M'$ by assumption, we get that $C_{\mathcal M}([a,b])\subset C_{\mathcal M'}([a,b])$ for all real numbers $a<b$ and the result follows.
\end{proof}

We now focus on the log-convex sequences of the form $\mathcal M^F$ defined in \eqref{ex_qa_sequence}, which are the ones considered in this work. First, we study the stability of the quasi-analyticity property of the sequence $\mathcal M^F$ when modifying the function $F$.

%\begin{lem}\label{17122020L2} If the sequence $\mathcal M^F$ associated with a continuous function $F : [0,+\infty)\rightarrow\mathbb R$ bounded from below is quasi-analytic, so is $\mathcal M^{TF+c}$ for all $c\in\mathbb R$ and $T>0$.
%\end{lem}

\begin{lem}\label{17122020L2} If the sequence $\mathcal M^F$ associated with a continuous function $F : [0,+\infty)\rightarrow\mathbb R$ bounded from below is quasi-analytic, then for all $c\in\mathbb R$ and $T>0$, the sequence $\mathcal M^{TF+c}$ associated with the continuous function $TF+c$ is also quasi-analytic.
\end{lem}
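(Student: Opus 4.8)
The plan is to relate the sequences $\mathcal M^F$ and $\mathcal M^{TF+c}$ through their defining formulas \eqref{ex_qa_sequence} and then invoke the Denjoy--Carleman criterion (Theorem~\ref{Den_Carl_thm}) together with Lemma~\ref{17122020L1}. First I would record the effect of the additive constant: since $M^{F+c}_k = \sup_{r\geq0} r^k e^{-F(r)-c} = e^{-c} M^F_k$, multiplying $F$ by a shift only rescales the whole sequence by the fixed positive constant $e^{-c}$, which changes neither log-convexity nor the value of the series $\sum_k M_k/M_{k+1}$; hence $\mathcal M^{F+c}$ is quasi-analytic iff $\mathcal M^F$ is. So it suffices to treat the dilation $F\mapsto TF$ with $T>0$, and by symmetry (replacing $T$ by $1/T$ and $F$ by $TF$) it is enough to handle, say, the case $T\geq 1$, or simply argue directly for all $T>0$.

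For the dilation, the key computation is a change of variables in the supremum: writing $\rho = r^{1/T}$ does not immediately work because the exponent $k$ and the power $T$ interact, so instead I would compare $M^{TF}_k$ with powers of the $M^F_j$. The cleanest route: for $r\geq 0$ and $k\geq 0$, bound $r^k e^{-TF(r)} = \bigl(r^{k/T} e^{-F(r)}\bigr)^T \leq \bigl(M^F_{k/T}\bigr)^T$ when $k/T$ is an integer, and in general interpolate using log-convexity so that $M^F_{s}$ makes sense for real $s\geq 0$ via $M^F_s \leq (M^F_{\lfloor s\rfloor})^{1-\{s\}}(M^F_{\lceil s\rceil})^{\{s\}}$. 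This yields an estimate of the form $M^{TF}_k \leq C^k \max(M^F_k, M^F_{\lceil k/T\rceil \cdot ?})^{\cdots}$; rather than chase exact exponents I would aim for the qualitative statement $M^{TF}_k \leq A\, B^k\, (M^F_{\lceil k\rceil})^{\max(1,T)}$ or similar, and then show that the associated ratio series still diverges. In fact the slickest formulation avoids interpolation entirely: one shows $\mathcal M^{TF}$ is (equivalent to) a sequence built from $\mathcal M^F$ by the operations ``raise to a power'' and ``dilate the index'', both of which preserve the divergence of $\sum M_k/M_{k+1}$ — raising to the power $p>0$ turns $M_k/M_{k+1}$ into $(M_k/M_{k+1})^p$, and since $M_k/M_{k+1}$ is nonincreasing and bounded (log-convexity), divergence of the original series forces $M_k/M_{k+1} \not\to 0$ fast enough that the $p$-th power series also diverges; index-dilation is handled by a comparison/grouping argument.

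The main obstacle I anticipate is making the interaction between the integer index $k$ and the real scaling $T$ rigorous without losing the divergence of the Denjoy--Carleman series: a crude bound like $M^{TF}_k \leq (M^F_k)^T$ is false in general (it goes the wrong way when $T<1$), and one genuinely needs the log-convexity of $\mathcal M^F$ to interpolate $M^F$ at non-integer indices and to control ratios. I would therefore spend the bulk of the argument setting up the interpolated quantity $M^F_s$ for real $s\geq0$, proving it is log-convex and comparable to the original sequence at integer points, establishing the two-sided bound $c^k (M^F_{k})^{\min(1,1/T)} \leq M^{TF}_k \leq C^k (M^F_k)^{\max(1,1/T)}$ (roughly), and finally checking that under such polynomial-in-$k$ distortions the condition $\sum_k M^F_k/M^F_{k+1} = +\infty$ from Theorem~\ref{Den_Carl_thm} is preserved. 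Once the two-sided comparison with a quasi-analytic sequence is in hand, Lemma~\ref{17122020L1} (applied in whichever direction is needed) closes the argument. Combining with the $e^{-c}$ rescaling from the first step gives the full claim for $\mathcal M^{TF+c}$.
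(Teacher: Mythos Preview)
Your handling of the additive constant $c$ is correct and essentially matches the paper. The genuine gap is in the dilation step: your claim that ``raising to the power $p>0$'' preserves the divergence of $\sum_k M_k/M_{k+1}$ is false for $p>1$. Take $M_k=k!$, which is log-convex with $M_k/M_{k+1}=1/(k+1)$ nonincreasing and $\sum 1/(k+1)=+\infty$; yet $\sum (M_k/M_{k+1})^2=\sum 1/(k+1)^2<+\infty$. So the majorant you propose for $T<1$, namely $C^k(M^F_k)^{1/T}$ with $1/T>1$, need not be quasi-analytic, and Lemma~\ref{17122020L1} gives nothing. Separating ``raise to a power'' from ``dilate the index'' loses exactly the cancellation that makes the argument work.

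The paper's remedy is to couple the two operations. Choose an integer $p\geq 1$ with $T\geq 1/p$ and bound
\[
M^{TF}_k = \sup_{r\geq0} r^k e^{-TF(r)} \leq e^{(1/p-T)\inf F}\Big(\sup_{r\geq0} r^{kp}e^{-F(r)}\Big)^{1/p} = e^{(1/p-T)\inf F}\,(M^F_{kp})^{1/p}.
\]
The ratio of the majorizing sequence is then $(M^F_{kp}/M^F_{(k+1)p})^{1/p}$, and because the original ratios $M^F_j/M^F_{j+1}$ are nonincreasing one has
\[
\bigg(\frac{M^F_{kp}}{M^F_{(k+1)p}}\bigg)^{1/p}=\bigg(\prod_{j=0}^{p-1}\frac{M^F_{kp+j}}{M^F_{kp+j+1}}\bigg)^{1/p}\geq \frac{M^F_{(k+1)p-1}}{M^F_{(k+1)p}},
\]
so the $1/p$-root and the index dilation by $p$ cancel to give back a single term of the original ratio sequence. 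A short block/Euclidean-division argument then shows that the subseries $\sum_k M^F_{(k+1)p-1}/M^F_{(k+1)p}$ diverges whenever $\sum_k M^F_k/M^F_{k+1}$ does, and Lemma~\ref{17122020L1} finishes. In short: do not bound $M^{TF}_k$ by a power of $M^F_k$ at the \emph{same} index; bound it by a root of $M^F$ at a \emph{dilated} index, and exploit the resulting telescoping.
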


\begin{proof} By considering the function $G = F+c/T$, whose associated sequence $\mathcal M^G$ is immediately quasi-analytic, we can assume that $c=0$. Let us consider a positive integer $p\geq1$ so that $T\geq1/p>0$. We therefore have that for all $k\geq0$,
\begin{align*}
	M_k^{TF} = \sup_{r\geq0} r^ke^{-TF(r)}
	& \le e^{(\frac1p-T)\inf F}\sup_{r\geq0} r^ke^{-\frac1pF(r)} \\
	& = e^{(\frac1p-T)\inf F}\big(\sup_{r\geq0} r^{kp}e^{-F(r)}\big)^{\frac1p} \\
	& = e^{(\frac1p-T)\inf F}(M^F_{kp})^{\frac1p}=: M_k.
\end{align*}
According to Lemma \ref{17122020L1}, it is sufficient to check that the sequence $(M_k)_k$ is quasi-analytic to end the proof of Lemma \ref{17122020L2}. To that end, we will use Denjoy-Carleman's theorem, that is Theorem \ref{Den_Carl_thm}, and prove that
$$\sum_{k=0}^{+\infty}\frac{M_k}{M_{k+1}} = +\infty.$$
Notice that since the sequence $\mathcal M^F$ is log-convex, the sequence $(M^F_k/M^F_{k+1})_k$ is non-increasing. As a consequence, we get that
$$\sum_{k=0}^{+\infty}\frac{M_k}{M_{k+1}} = \sum_{k=0}^{+\infty}\bigg(\frac{M^F_{kp}}{M^F_{kp+p}}\bigg)^{\frac1p}
= \sum_{k=0}^{+\infty}\bigg(\prod_{j=0}^{p-1}\frac{M^F_{kp+j}}{M^F_{kp+j+1}}\bigg)^{\frac1p}
\geq \sum_{k=0}^{+\infty}\frac{M^F_{kp+p-1}}{M^F_{kp+p}}.$$
It is therefore sufficient to prove that 
\begin{equation}\label{19122020E1}
	\sum_{k=0}^{+\infty}\frac{M^F_{kp+p-1}}{M^F_{kp+p}} = +\infty.
\end{equation}
By using again that the sequence $(M^F_k/M^F_{k+1})_k$ is non-increasing, we obtain by also applying an Euclidean division by the positive integer $p\geq1$ that
$$\sum_{k=0}^{+\infty}\frac{M^F_k}{M^F_{k+1}} = \sum_{r=0}^{p-1}\sum_{k=0}^{+\infty}\frac{M^F_{kp+r}}{M^F_{kp+r+1}}
\le p\sum_{k=0}^{+\infty}\frac{M^F_{kp}}{M^F_{kp+1}}.$$
Yet, the log-convex sequence $\mathcal M^F$ is quasi-analytic by assumption, and Theorem \ref{Den_Carl_thm} implies that
\begin{equation}\label{9122020E2}
	\sum_{k=0}^{+\infty}\frac{M^F_k}{M^F_{k+1}} = +\infty\quad\text{and so}\quad\sum_{k=0}^{+\infty}\frac{M^F_{kp}}{M^F_{kp+1}} = +\infty.
\end{equation}
Finally, exploiting a last time the non-increasing property of the sequence $(M^F_k/M^F_{k+1})_k$, we obtain that
$$\sum_{k=0}^{+\infty}\frac{M^F_{(k+1)p}}{M^F_{(k+1)p+1}}\le\sum_{k=0}^{+\infty}\frac{M^F_{(k+1)p-1}}{M^F_{(k+1)p}}.$$
This inequality and \eqref{9122020E2} show that \eqref{19122020E1} holds. This ends the proof of Lemma \ref{17122020L2}.
\end{proof}

It is a very interesting problem to characterize the functions $F$ that generate quasi-analytic sequences $\mathcal M^F$. This question has been addressed by B. Jaye and M. Mitkovski in \cite{JM}, where these authors provided a necessary and sufficient condition on some functions $F$ that ensures the associated sequence $\mathcal M^F$ to be quasi-analytic, by exploiting Denjoy-Carleman's theorem.

\begin{prop}[\cite{JM}]\label{06112020P1} Let $F : [0, +\infty)\rightarrow\mathbb R$ be a function satisfying
\begin{enumerate}[label=$(\roman*)$,leftmargin=* ,parsep=2pt,itemsep=0pt,topsep=2pt]
	\item $F(0) = 0$ and $F$ is non-decreasing with $\lim_{+\infty}F = +\infty$,
	\item $F$ is lower-semicontinuous and $s\in\mathbb R\mapsto F(e^s)$ is convex.
\end{enumerate}
Then, the sequence $\mathcal M^F$ associated with the function $F$ defined in \eqref{ex_qa_sequence} is quasi-analytic if and only if
$$\int_0^{+\infty} \frac{F(t)}{1+t^2}\ \mathrm dt = +\infty.$$
\end{prop}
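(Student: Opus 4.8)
The statement to prove is Proposition \ref{06112020P1} (attributed to Jaye--Mitkovski), which characterizes when the log-convex sequence $\mathcal M^F$ is quasi-analytic in terms of the integral divergence condition $\int_0^{+\infty}\frac{F(t)}{1+t^2}\,\mathrm dt = +\infty$. Since Denjoy--Carleman's theorem (Theorem \ref{Den_Carl_thm}) already tells us that quasi-analyticity of $\mathcal M^F$ is equivalent to $\sum_{k\geq0}\frac{M^F_k}{M^F_{k+1}} = +\infty$, the whole task reduces to translating this series condition into the integral condition on $F$. The bridge between the two is the observation that $\log M^F_k = \sup_{r\geq0}\big(k\log r - F(r)\big)$ is, up to the change of variables $r = e^s$, exactly the Legendre transform of the convex function $s\mapsto F(e^s)$. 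So the plan is: first recast everything in logarithmic variables, then use convex-duality (Legendre transform) identities to compute $M^F_k/M^F_{k+1}$ in terms of $F$, and finally compare the resulting series with the integral.

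\textbf{Step 1: logarithmic reduction.} Set $\varphi(s) = F(e^s)$ for $s\in\mathbb R$, which by hypothesis $(ii)$ is convex and lower-semicontinuous, and by $(i)$ is non-decreasing with $\varphi(s)\to-\dots$; more precisely $\varphi(s)\to 0$-ish behavior as $s\to-\infty$ is not quite right — rather $F(0)=0$ and monotonicity give $\varphi(s)\geq 0$ and $\varphi(s)\to 0$ as $s\to-\infty$ is false in general, but $\varphi$ is bounded below. Then $\log M^F_k = \sup_{s\in\mathbb R}(ks - \varphi(s)) = \varphi^*(k)$, the Legendre transform evaluated at the integer $k$. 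The integral $\int_1^{+\infty}\frac{F(t)}{1+t^2}\,\mathrm dt$ becomes, via $t = e^s$, comparable to $\int_0^{+\infty}\varphi(s)e^{-s}\,\mathrm ds$, so the integral condition reads $\int_0^{+\infty}\varphi(s)e^{-s}\,\mathrm ds = +\infty$ (the part near $t=0$ is harmless since $F$ is continuous and bounded there).

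\textbf{Step 2: from the series to the integral.} The ratio $M^F_k/M^F_{k+1} = \exp\big(\varphi^*(k)-\varphi^*(k+1)\big)$. Since $\varphi^*$ is convex, the increments $\varphi^*(k+1)-\varphi^*(k)$ are non-decreasing in $k$; denote this increment $a_k$, so $\sum_k \frac{M^F_k}{M^F_{k+1}} = \sum_k e^{-(a_0+\dots+a_{k-1})}$ roughly, but more usefully $\frac{M^F_k}{M^F_{k+1}} = e^{-\Delta\varphi^*(k)}$ where $\Delta\varphi^*(k)$ is the discrete derivative. By convex duality, $\varphi^{*\prime}$ is the inverse function of $\varphi'$: if $\varphi'$ grows like some function $\psi$, then the discrete derivative $\Delta\varphi^*(k) \approx (\varphi')^{-1}(k)$. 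So $\sum_k \frac{M^F_k}{M^F_{k+1}}$ converges iff $\sum_k e^{-(\varphi')^{-1}(k)} < \infty$, and by an integral-comparison (monotonicity of $(\varphi')^{-1}$) this is iff $\int_0^{+\infty} e^{-(\varphi')^{-1}(u)}\,\mathrm du < \infty$. Now substitute $u = \varphi'(s)$: this integral becomes $\int e^{-s}\varphi''(s)\,\mathrm ds$ (at least formally; in the non-smooth convex case one works with the measure $\mathrm d\varphi'$), and an integration by parts turns $\int e^{-s}\,\mathrm d\varphi'(s)$ into something comparable to $\int \varphi'(s)e^{-s}\,\mathrm ds$, which after another integration by parts is comparable to $\int\varphi(s)e^{-s}\,\mathrm ds$. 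Chaining these equivalences: $\sum_k\frac{M^F_k}{M^F_{k+1}} = +\infty \iff \int_0^{+\infty}\varphi(s)e^{-s}\,\mathrm ds = +\infty \iff \int_0^{+\infty}\frac{F(t)}{1+t^2}\,\mathrm dt = +\infty$, which combined with Theorem \ref{Den_Carl_thm} gives the claim.

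\textbf{Main obstacle.} The delicate point is handling the lack of smoothness: $F$ (hence $\varphi$) is only assumed lower-semicontinuous and convex, so $\varphi'$ need not exist everywhere and $\varphi^*$ need not be differentiable. The clean statements "$\varphi^{*\prime} = (\varphi')^{-1}$" and the integrations by parts must be replaced by their measure-theoretic analogues (subdifferentials, Stieltjes integration, the Legendre--Fenchel inversion formula for the graphs of monotone operators). One also has to be careful that the discrete sum $\sum_k e^{-\Delta\varphi^*(k)}$ is genuinely comparable to the continuous integral $\int e^{-\varphi^{*\prime}(u)}\,\mathrm du$ — this needs the convexity (monotonicity of increments) to sandwich the sum between two integrals, and needs a mild non-degeneracy so that the increments don't stay bounded (which follows from $\lim_{+\infty}F = +\infty$, forcing $\varphi^*$ to be finite-valued with increments tending to $+\infty$ unless the series trivially diverges). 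Since this proposition is quoted from \cite{JM}, in the paper itself one would likely just cite it; a self-contained proof would spend most of its effort making the convex-duality dictionary between $\{M^F_k\}$ and the integral of $F$ precise in the non-smooth setting.
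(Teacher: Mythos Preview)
The paper does not prove this proposition at all: it is stated with attribution to \cite{JM} and used as a black box. Your closing remark anticipated this correctly. So there is no ``paper's proof'' to compare against; what follows is a brief assessment of your sketch on its own terms.

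Your strategy --- pass to logarithmic variables, recognise $\log M^F_k$ as the Legendre transform $\varphi^*(k)$ of $\varphi(s)=F(e^s)$, and then translate the Denjoy--Carleman ratio series into the integral via convex duality --- is a legitimate route and is morally how the result is proved. There is, however, a shorter path that bypasses most of the Stieltjes/integration-by-parts work in your Step~2: instead of the ratio form of Denjoy--Carleman (the paper's Theorem~\ref{Den_Carl_thm}), use the equivalent \emph{trace-function} form, namely that a log-convex sequence $(M_k)$ is quasi-analytic iff
\[
\int_1^{+\infty}\frac{\log T(r)}{1+r^2}\,\mathrm dr = +\infty,\qquad T(r)=\sup_{k\geq0}\frac{r^k}{M_k}.
\]
For $M_k=M^F_k=\sup_{r\geq0}r^ke^{-F(r)}$ one has $\log T(r)=\sup_k(k\log r-\log M^F_k)=\varphi^{**}(\log r)$, and the hypotheses on $F$ (lower-semicontinuity of $\varphi$, convexity) give $\varphi^{**}=\varphi$, i.e.\ $\log T(r)=F(r)$. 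The integral criterion then reads exactly $\int\frac{F(r)}{1+r^2}\,\mathrm dr=+\infty$, and one is done in one line. This is presumably closer to what \cite{JM} does.

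Two concrete issues in your write-up: in Step~2 you write $\sum_k e^{-(a_0+\cdots+a_{k-1})}$ for the Denjoy--Carleman series, but the correct expression is simply $\sum_k e^{-a_k}$ with $a_k=\varphi^*(k+1)-\varphi^*(k)$; and the paragraph in Step~1 visibly hesitates about the behaviour of $\varphi$ at $-\infty$ (it is non-negative, non-decreasing, and tends to $F(0^+)=0$ as $s\to-\infty$, which is all one needs). Neither is fatal, but both would need cleaning up in a self-contained proof.
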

The following proposition provides some examples of functions $F$ generating quasi-analytic sequences $\mathcal M^F$. For the sake of completeness of the present work, we give a proof based only on Theorem \ref{Den_Carl_thm}, and not on Proposition \ref{06112020P1}.

\begin{prop}\label{19122020P1} Let $p\geq1$ be a positive integer and $F_p : [0,+\infty)\rightarrow[0,+\infty)$ be the non-negative function defined for all $t\geq0$ by 
$$F_p(t) = \frac t{g(t)(g\circ g)(t)... g^{\circ p}(t)},\quad\text{where}\quad g(t) = \log(e+t),$$
with $g^{\circ p} = g\circ\ldots\circ g$ ($p$ compositions). The associated sequence $\mathcal M^{F_p}$ defined in \eqref{ex_qa_sequence} is a quasi-analytic sequence of positive real numbers.
\end{prop}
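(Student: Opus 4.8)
The plan is to invoke Denjoy--Carleman's theorem (Theorem~\ref{Den_Carl_thm}) for the sequence $\mathcal M^{F_p}$, whose log-convexity is the general property of the sequences $\mathcal M^F$ recalled in Section~\ref{QA}, so that the whole statement reduces to checking that $M^{F_p}_k\in(0,+\infty)$ for every $k\geq0$ and that $\sum_{k\geq0}M^{F_p}_k/M^{F_p}_{k+1}=+\infty$. The finiteness is clear since $F_p(r)\geq r/(\log(e+r))^p$ grows faster than $k\log r$, and the positivity since $F_p$ is non-negative with $F_p(0)=0$; moreover $F_p$ is smooth on $[0,+\infty)$ (as $g\geq1$ everywhere, all iterates $g^{\circ j}$ are smooth and $\geq1$), so for $k\geq1$ the supremum defining $M^{F_p}_k$ is attained at some $r_k\in(0,+\infty)$, necessarily a critical point, i.e. $F_p'(r_k)=k/r_k$.

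The heart of the matter is a sharp control of the maximizers $r_k$. Write $G=g\cdot(g\circ g)\cdots g^{\circ p}$, so $F_p(t)=t/G(t)$ and $F_p'(t)=\frac1{G(t)}\big(1-\frac{tG'(t)}{G(t)}\big)$ with $\frac{tG'(t)}{G(t)}=\sum_{j=1}^p\frac{t\,(g^{\circ j})'(t)}{g^{\circ j}(t)}$. Since $(g^{\circ j})'(t)\leq(e+t)^{-1}$ and $g^{\circ j}(t)\to+\infty$ for every fixed $j\geq1$, each summand tends to $0$, hence there is $R_0>0$ with $\tfrac1{2G(t)}\leq F_p'(t)\leq\tfrac1{G(t)}\leq1$ for $t\geq R_0$. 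The global bound $F_p'\leq1$ gives $r_k\geq k$, so $r_k\to+\infty$; plugging $F_p'(r_k)\geq\tfrac1{2G(r_k)}$ and $G(r)\leq(\log(e+r))^p$ into $k=r_kF_p'(r_k)$ yields $r_k\leq2k(\log(e+r_k))^p$ for $k$ large, which bootstraps to $r_k\leq 2^{p+1}k(\log k)^p$, and then, reinjecting this bound into the (slowly varying) factors $g^{\circ j}(r_k)\leq2\log^{(j)}k$, to
$$r_k\leq 2^{p+1}\,k\,\log k\,\log\log k\cdots\log^{(p)}k\qquad(k\text{ large}),$$
where $\log^{(j)}$ is the $j$-fold iterated logarithm.

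It remains to convert this into the Carleman sum. If $r_{k+1}$ maximizes $r\mapsto r^{k+1}e^{-F_p(r)}$, then $M^{F_p}_{k+1}=r_{k+1}\big(r_{k+1}^k e^{-F_p(r_{k+1})}\big)\leq r_{k+1}M^{F_p}_k$, whence $M^{F_p}_k/M^{F_p}_{k+1}\geq 1/r_{k+1}$ and therefore
$$\sum_{k\geq0}\frac{M^{F_p}_k}{M^{F_p}_{k+1}}\geq\sum_{k\geq k_0}\frac1{r_{k+1}}\geq\frac1{2^{p+1}}\sum_{k\geq k_0}\frac1{k\,\log k\,\log\log k\cdots\log^{(p)}k}=+\infty,$$
the last series being a Bertrand series, divergent by the integral test since $\frac{\mathrm d}{\mathrm dx}\log^{(p+1)}x=\big(x\,\log x\cdots\log^{(p)}x\big)^{-1}$. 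By Theorem~\ref{Den_Carl_thm}, $\mathcal M^{F_p}$ is then a quasi-analytic sequence of positive real numbers.

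The main obstacle is the middle step: one must genuinely extract the extra iterated-logarithm factors in the bound for $r_k$. Indeed the correction term $tG'(t)/G(t)$ has to be shown negligible — which relies on the iterates $g^{\circ j}$ varying extremely slowly — and the bootstrap has to be carried out with enough precision; a cruder estimate such as $r_k\leq Ck(\log k)^p$ would only give the convergent series $\sum 1/(k(\log k)^p)$ and would not suffice, so capturing all $p$ iterated logs is essential for the divergence.
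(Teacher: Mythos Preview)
Your proof is correct and follows the same overall strategy as the paper: locate the maximizer $r_k$ of $r\mapsto r^k e^{-F_p(r)}$ via the critical-point equation $r_kF_p'(r_k)=k$, bound $r_k$ by a constant times $k\,\log k\,\log\log k\cdots\log^{(p)}k$, deduce $M^{F_p}_k/M^{F_p}_{k+1}\geq 1/r_{k+1}$, and conclude by the divergence of the Bertrand series and Denjoy--Carleman.

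The only substantive difference is in how the upper bound on $r_k$ is obtained. The paper sets $u_k=2k\,\phi_p(k)$ (your $G(k)$ is their $\phi_p(k)$), computes directly that $u_kF_p'(u_k)\sim2k$ using the asymptotic $\phi_p(k)/\phi_p(2k\phi_p(k))\to1$, and then invokes the eventual monotonicity of $t\mapsto tF_p'(t)$---which it proves separately by differentiating---to conclude $t_k\leq u_k$. You instead bootstrap: from $r_k\leq 2kG(r_k)$ and the crude bound $G(r)\leq(\log(e+r))^p$ you first get $r_k\leq Ck(\log k)^p$, then reinject this into each factor $g^{\circ j}(r_k)$ to recover all $p$ iterated logarithms. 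Your route is a bit more elementary in that it bypasses the monotonicity computation; the paper's route is slightly cleaner in that the target $u_k$ is written down from the start and no iteration is needed. Both land on the same Bertrand series, and your closing remark correctly identifies why the full product of iterated logs (and not merely $(\log k)^p$) is essential for $p\geq2$.
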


\begin{proof} We first notice that the sequence $\mathcal M^{F_p}$ is well-defined by \eqref{ex_qa_sequence} as the supremum of a continuous function on $[0,+\infty)$ with finite limit when $t \to +\infty$. Define $\phi_p : [0,+\infty) \rightarrow [0,+\infty)$ by 
$$\phi_p(t) = g(t)(g\circ g)(t)\ldots g^{\circ p}(t),\quad t\geq0.$$
Let us check that the following convergence holds
\begin{equation}\label{equivalent}
	\frac{\phi_p(k)}{\phi_p\big(2k\phi_p(k)\big)}\underset{k\rightarrow+\infty}{\rightarrow}1.
\end{equation}
Indeed, on the one hand, we get that
$$\phi_p(k)\underset{k\rightarrow+\infty}{\sim}(\log k)\log(\log k)\ldots\log^{\circ p}k.$$
On the other hand, we recall that if $f$ and $g$ are two numerical functions satisfying $f\sim_{+\infty}g$ with $\lim_{+\infty}f = +\infty$, then $\log f\sim_{+\infty}\log g$. As a consequence, we deduce that
$$g(2k\phi_p(k))\underset{k\rightarrow+\infty}{\sim}\log(2k(\log k)\log(\log k)\ldots\log^{\circ p}k)\underset{k\rightarrow+\infty}{\sim}\log k.$$
Iterating this argument, it follows that for all $j\geq1$,
$$g^{\circ j}(2k\phi_p(k))\underset{k\rightarrow+\infty}{\sim}\log^{\circ j}k.$$
This implies that the convergence \eqref{equivalent} actually holds, since
$$\frac{\phi_p(k)}{\phi_p\big(2k\phi_p(k)\big)}\underset{k\rightarrow+\infty}{\sim}\frac{(\log k)\log(\log k)\ldots\log^{\circ p}k}{(\log k)\log(\log k)\ldots\log^{\circ p}k} = 1.$$
%The function $\phi_p$ is positive, increasing and by using the morphism property of the logarithm and the classical fact 
%\begin{equation*}
%	\log t \underset{t \rightarrow+\infty} = o(t),
%\end{equation*}
%it can be checked that
%\begin{equation}%\label{equivalent}
%	\frac{\phi_p(k)}{\phi_p\big(2k\phi_p(k)\big)}\underset{k\rightarrow+\infty}{\rightarrow}1.
%\end{equation}
%Let us check this fact when $p=2$. On the one hand, by taking advantage of the morphism property of the logarithm, direct computations provide 
%\begin{equation}\label{numerateur}
%	\phi_2(k)= \log(e+k) \bigg(\log(\log(e+k)) +\log\bigg(1+\frac{e}{\log(e+k)}\bigg)\bigg)\underset{k\rightarrow+\infty}{\sim} (\log k)\log(\log(k)).
%\end{equation}
%On the other hand, 
%\begin{align}\label{denominateur}
%\phi_2\big(2k\phi_2(k)\big) &=\log(e+2k\phi_2(k)) \log\big(e+\log(e+2k\phi_2(k))\big) \\[5pt]\nonumber
%&=\big(\log k+\log(e/k+2\phi_2(k)\big)\big(\log\big(e+\log k+\log(e/k+2\phi_2(k))\big)\big) \\[5pt] \nonumber
%&\underset{k\rightarrow+\infty}{\sim} \log k \bigg(\log(\log k)+\log\bigg(1+\frac{e+\log(e/k+2\phi_2(k))}{\log k}\bigg)\bigg)\\[5pt] \nonumber
%&\underset{k\rightarrow+\infty}{\sim}(\log k)\log(\log k).
%\end{align}
%The assertion \eqref{equivalent} then follows from \eqref{numerateur} and \eqref{denominateur}, in the case $p=2$.
Moreover, by direct computations, we have that for all $t\geq0$,
\begin{align}\label{deriv_estimate0}
	\frac{\phi_p'(t)}{\phi_p(t)} & = \sum_{i=1}^p \frac{(g^{\circ i})'(t)}{g^{\circ i}(t)} = \sum_{i=1}^p\frac1{g^{\circ i}(t)}\prod_{j=1}^i g'(g^{\circ(j-1)}(t)) \\[5pt] \nonumber
	& = \sum_{i=1}^p\frac1{g^{\circ i}(t)}\prod_{j=1}^i\frac1{e+g^{\circ(j-1)}(t)} \\[5pt] \nonumber
	& = \frac1{e+t}\sum_{i=1}^p\frac1{g^{\circ i}(t)}\prod_{j=2}^i\frac1{e+g^{\circ(j-1)}(t)}  \\[5pt] \nonumber
	& \underset{t\rightarrow+\infty} = o\bigg(\frac1t\bigg),
\end{align}
that is
\begin{equation}\label{deriv_estimate}
	\phi_p'(t) \underset{t\rightarrow+\infty}= o\Big(\frac{\phi_p(t)}t\Big).
\end{equation}
For all positive integer $k\geq1$, the supremum $M^{F_p}_k =  \sup_{r\geq0}r^ke^{-F_p(r)}$ is reached at a point $t_k>0$ satisfying the following equation
\begin{equation}\label{deriv_estimate2}
	t_kF'_p(t_k)= k,
\end{equation}
which is equivalent to
$$\frac{t_k}{\phi_p(t_k)} \bigg(1-t_k\frac{\phi_p'(t_k)}{\phi_p(t_k)}\bigg)=k.$$
Let us prove that $t_k$ exists and is unique for $k\gg 1$ sufficiently large. First, notice that the application $t \mapsto tF'_p(t)$ satisfies $\lim_{t \to +\infty} tF'_p(t) =+\infty$ since $\phi_p(t)=_{t \to +\infty} o(t)$ and that \eqref{deriv_estimate} holds. It remains to check that there exists $T >0$ such that this application is increasing on $[T, +\infty)$. To that end, it is sufficient to prove that the functions
$$F_p : t \rightarrow \frac{t}{\phi_p(t)}\quad\text{and}\quad G_p : t \rightarrow 1- t \frac{\phi'_p(t)}{\phi_p(t)},$$
are positive and increasing on $[T, +\infty)$ for some $T \gg 1$ sufficiently large. The positivity of these functions is ensured by \eqref{deriv_estimate} and the positivity of $\phi_p$. Let us prove that $F_p$ and $G_p$ are increasing on $[T,+\infty)$ for $T$ sufficiently large. On the one hand, this fact holds true for the first function since 
$$F'_p(t) =\frac1{\phi_p(t)} \bigg(1-t \frac{\phi'_p(t)}{\phi_p(t)}\bigg)\underset{t\rightarrow+\infty}{\sim}\frac1{\phi_p(t)} >0.$$ 
On the other hand, it follows from \eqref{deriv_estimate0} that for all $t \geq 0$,
\begin{align}\label{derivG}
-G'_p(t) &= \bigg(\frac t{e+t}\sum_{i=1}^p\frac1{g^{\circ i}(t)}\prod_{j=2}^i\frac1{e+g^{\circ(j-1)}(t)} \bigg)' \\[5pt] \nonumber
&= \frac e{(e+t)^2} \sum_{i=1}^p\frac1{g^{\circ i}(t)}\prod_{j=2}^i\frac1{e+g^{\circ(j-1)}(t)} + \frac{t}{e+t} \bigg(\sum_{i=1}^p\frac1{g^{\circ i}(t)}\prod_{j=2}^i\frac1{e+g^{\circ(j-1)}(t)} \bigg)'.
\end{align}
It can be checked that there exists a positive function $h_2$ given by a rational fraction of $g, g^{\circ2},\ldots, g^{\circ p}$ such that
\begin{equation}\label{rationalfrac}
	\bigg(\sum_{i=1}^p\frac1{g^{\circ i}(t)}\prod_{j=2}^i\frac1{e+g^{\circ(j-1)}(t)}\bigg)' = -\frac{h_2(t)}{e+t}.
\end{equation}
We deduce from \eqref{derivG} and \eqref{rationalfrac} that 
\begin{equation*}
	-G'_p(t)  = \frac e{(e+t)^2} h_1(t)- \frac t{(e+t)^2} h_2(t) = \frac t{(e+t)^2} h_2(t) \bigg( \frac et \frac{h_1(t)}{h_2(t)} -1 \bigg),
\end{equation*}
with
\begin{equation*}
\forall t \geq 0, \quad h_1(t) =\sum_{i=1}^p\frac1{g^{\circ i}(t)}\prod_{j=2}^i\frac1{e+g^{\circ(j-1)}(t)}.
\end{equation*}
Since $h_1/h_2$ is a rational fraction of $g, g^{\circ2},\ldots,g^{\circ p}$, classical lemmas ensure that 
$$\frac{h_1(t)}{t h_2(t)} \underset{t \to +\infty}{\rightarrow}0.$$ 
Therefore, since $h_2$ is positive, there exists a positive constant $T>0$ such that $G'_p(t) >0$ for all $t\geq T$. It follows that for all $k\gg1$ sufficiently large, there exists a unique $t_k \in(0,+\infty)$ satisfying \eqref{deriv_estimate2}. By defining $u_k = 2k\phi_p(k)$, it follows from \eqref{equivalent} and \eqref{deriv_estimate} that
$$u_kF'_p(u_k) = \frac{u_k}{\phi_p(u_k)} \bigg(1-u_k \frac{\phi_p'(u_k)}{\phi_p(u_k)}\bigg)= 2k \frac{\phi_p(k)}{\phi_p\big(2k\phi_p(k)\big)} \bigg(1-u_k \frac{\phi_p'(u_k)}{\phi_p(u_k)} \bigg)\underset{k\rightarrow+\infty}{\sim} 2k.$$
As a consequence, since $t_kF'_p(t_k) = k$ from \eqref{deriv_estimate2} and that the application $t \mapsto tF'_p(t)$ is increasing on $[T,+\infty)$, we get that for all $k\gg1$ sufficiently large,
$$t_k \le u_k = 2k\phi_p(k).$$
We deduce that for all $k\gg1$,
$$\frac{M^{F_p}_{k-1}}{M^{F_p}_k}\geq\frac{t_k^{k-1}e^{-F_p(t_k)}}{t_k^ke^{-F_p(t_k)}}=\frac1{t_k}\geq\frac1{2k\phi_p(k)}.$$
It is well-known that the series $\sum\frac1{2k\phi_p(k)}$ is divergent and then, we obtain 
$$\sum_{k=0}^{+\infty}\frac{M^{F_p}_k}{M^{F_p}_{k+1}} =+\infty.$$
Since the sequence $\mathcal M^{F_p}$ is log-convex by construction, we deduce from Theorem \ref{Den_Carl_thm} that the sequence $\mathcal M^{F_p}$ is quasi-analytic. This ends the proof of Proposition \ref{19122020P1}.
\end{proof}

\begin{cor}\label{21122020C1} Let $s\geq1$, $0\le\delta\le1$ be non-negative real numbers and $F_{s,\delta} : [0,+\infty)\rightarrow[0,+\infty)$ be the non-negative continuous function defined by
$$F_{s, \delta}(t)= \frac{t^s}{\log^{\delta}(e+t)},\quad t\geq0.$$
The associated sequence $\mathcal M^{F_{s,\delta}}$ defined in \eqref{ex_qa_sequence} is a quasi-analytic sequence of positive real numbers.
\end{cor}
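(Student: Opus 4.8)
The plan is to reduce the statement to Proposition~\ref{19122020P1} in the case $p=1$, where $F_1(t)=t/\log(e+t)$, by comparing $F_{s,\delta}$ with $F_1$ up to an additive constant and then invoking the stability lemmas for quasi-analytic sequences.

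First I would record that $\mathcal M^{F_{s,\delta}}$ is a well-defined sequence of positive real numbers. Indeed $F_{s,\delta}$ is continuous, non-negative, vanishes at $0$, and satisfies $F_{s,\delta}(r)/\log r\to+\infty$ as $r\to+\infty$ because $s\geq1$ while the denominator $\log^{\delta}(e+r)$ grows only polylogarithmically; hence the function $r\mapsto r^ke^{-F_{s,\delta}(r)}$ is continuous on $[0,+\infty)$, tends to $0$ at $r=0$ for $k\geq1$ and at $r=+\infty$ for every $k\geq0$, so the supremum $M_k^{F_{s,\delta}}$ from \eqref{ex_qa_sequence} is attained and positive (with $M_0^{F_{s,\delta}}=e^{-\inf F_{s,\delta}}=1$). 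As already noted for sequences of this form, $\mathcal M^{F_{s,\delta}}$ is moreover log-convex by construction.

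The key elementary step is the pointwise bound
$$\forall t\geq0,\qquad F_{s,\delta}(t)\geq F_1(t)-1,$$
which I would establish by distinguishing two cases. For $t\geq1$, one has $t^s\geq t$ since $s\geq1$, and $\log^{\delta}(e+t)\leq\log(e+t)$ since $0\leq\delta\leq1$ and $\log(e+t)\geq1$; therefore $F_{s,\delta}(t)=t^s/\log^{\delta}(e+t)\geq t/\log(e+t)=F_1(t)\geq F_1(t)-1$. For $0\leq t\leq1$, it suffices to use $F_{s,\delta}(t)\geq0$ together with $F_1(t)=t/\log(e+t)\leq1/\log(e+1)<1$, so that $F_1(t)-1<0\leq F_{s,\delta}(t)$. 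The only point that requires a little care is precisely this near-origin regime: the naive inequality $F_{s,\delta}\geq F_1$ fails on $(0,1)$ when $s>1$ because then $t^s<t$ there, and the additive constant $-1$ is exactly what absorbs this bounded discrepancy.

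Finally I would chain the three ingredients. The bound above gives $e^{-F_{s,\delta}(r)}\leq e\,e^{-F_1(r)}$ for all $r\geq0$, hence $M_k^{F_{s,\delta}}\leq M_k^{F_1-1}$ for every $k\geq0$, that is, $\mathcal M^{F_{s,\delta}}\leq\mathcal M^{F_1-1}$ termwise. By Proposition~\ref{19122020P1} applied with $p=1$, the sequence $\mathcal M^{F_1}$ is quasi-analytic, and Lemma~\ref{17122020L2} (with $T=1$ and $c=-1$) then ensures that $\mathcal M^{F_1-1}$ is quasi-analytic as well. Lemma~\ref{17122020L1} finally yields that $\mathcal M^{F_{s,\delta}}$ is quasi-analytic, which proves the corollary. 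No genuine obstacle is expected beyond the elementary case distinction in the pointwise bound; everything else is a direct application of results already established.
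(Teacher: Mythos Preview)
Your proof is correct and follows essentially the same approach as the paper: establish a pointwise lower bound $F_{s,\delta}\geq F_1-C$ for some constant, then combine Proposition~\ref{19122020P1}, Lemma~\ref{17122020L2}, and Lemma~\ref{17122020L1}. You simply make explicit (with $C=1$ and a clean case split at $t=1$) what the paper leaves as the existence of some constant $C_{s,\delta}$.
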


\begin{proof} Notice that there exists a positive constant $C_{s,\delta}>0$ such that for all $t \geq 0$,
$$\frac{t^s}{\log^{\delta}(e+t)} \geq \frac t{\log(e+t)} - C_{s,\delta}.$$
Corollary \ref{21122020C1} is then a consequence of Lemma \ref{17122020L1}, Lemma \ref{17122020L2} and Proposition \ref{19122020P1}.
\end{proof}

\section{Proof of the approximate null-controllability results}\label{cunc}

This section is devoted to the proofs of the results stated in Section~\ref{approximate_cont_result}, dealing with the (cost-uniform) approximate null-controllability of the evolution equation
\begin{equation}\label{diff_eq0}\tag{$E_F$}
\left\{\begin{aligned}
	& \partial_tf(t,x) + F(\vert D_x\vert) f(t,x) = h(t,x)\mathbbm 1_{\omega}(x),\quad t>0,\ x\in\mathbb R^n, \\
	& f(0,\cdot) = f_0\in L^2(\mathbb R^n),
\end{aligned}\right.
\end{equation}
where the continuous function $F : [0,+\infty) \rightarrow \mathbb R$ bounded from below is assumed to be associated with a quasi-analytic sequence $\mathcal{M}^F$ of positive real numbers, defined in \eqref{16122020E1}, and $\omega\subset\mathbb R^n$ is a measurable set with positive Lebesgue measure.

\subsection{Proof of Proposition \ref{approximate_controllability_weak}}\label{approx_cont_proof} The purpose of this first subsection is to prove that the evolution equation \eqref{diff_eq0} is approximately null-controllable from the support $\omega$ in any positive time $T>0$ (with no extra assumption on $\omega$). It is well-known, see e.g. \cite{coron_book} (Theorem 2.43, page 56) or \cite{MR4041279} (Remark 16), that the approximate null-controllability (without uniform cost) of the system \eqref{01062020E1} is equivalent to a unique continuation property of the adjoint system. More precisely, the adjoint system 
$$\left\{\begin{aligned}
	& \partial_tg(t,x) + F(\vert D_x\vert) g(t,x) = 0,\quad t>0,\ x\in\mathbb R^n, \\
	& g(0,\cdot) = g_0\in L^2(\mathbb R^n),
\end{aligned}\right.$$
is said to satisfy the unique continuation property from $\omega\subset\mathbb R^n$ at some time $T>0$ if for all initial datum $g_0\in L^2(\mathbb R^n)$, we have
$$\big(\forall t\in[0,T],\ \mathbbm1_{\omega}e^{-tF(\vert D_x\vert)}g_0=0\big)\Rightarrow\big(e^{-TF(\vert D_x\vert)}g_0=0\big).$$
Let $T>0$ be a fixed positive time. We aim at proving that the unique continuation property holds from the control support $\omega$.

Let us first establish that for all $g \in L^2(\mathbb{R}^n)$, the function $e^{-TF(\vert D_x\vert)}g$ belongs to a class of quasi-analytic functions. We consider $g\in L^2(\mathbb R^n)$ not being identically equal to zero (the result is straightforward when $g=0$). Notice that for all $\beta \in \mathbb{N}^n$, 
$$\xi \rightarrow \xi^{\beta} \mathscr F\big( e^{-TF(\vert D_x\vert)}g\big) \in L^1(\mathbb{R}^n),$$ 
since we have by Plancherel's theorem that
\begin{align}\label{sobolev_embedding0}	
\int_{\mathbb R^n} \big\vert\xi^{\beta} \mathscr F\big( e^{-TF(\vert D_x\vert)}g\big)(\xi)\big\vert\ \mathrm d\xi & = \int_{\mathbb R^n} \big\vert\xi^{\beta} e^{-TF(|\xi|)} \widehat g(\xi)\big\vert\ \mathrm d\xi \\[5pt]
	& \leq (2\pi)^{n/2} \bigg(\int_{\mathbb{R}^n} |\xi|^{2|\beta|} e^{-2TF(|\xi|)}\ \mathrm d\xi \bigg)^{\frac12} \|g\|_{L^2(\mathbb R^n)} \nonumber \\[5pt]
	& = (2\pi)^{n/2}\bigg(\int_{\mathbb{R}^n} \frac{(|\xi|^{2|\beta|}+|\xi|^{2|\beta|+2n}) e^{-2TF(|\xi|)}}{1+|\xi|^{2n}}\ \mathrm d\xi \bigg)^{\frac12} \|g\|_{L^2(\mathbb{R}^n)} \nonumber \\[5pt]
	& \leq C_n \big((M^{TF}_{|\beta|})^2 + (M^{TF}_{|\beta|+n})^2\big)^{\frac12} \|g\|_{L^2(\mathbb{R}^n)} \nonumber \\[5pt]
	& < +\infty, \nonumber
\end{align}
the positive constant $C_n>0$ being given by
$$C_n=(2\pi)^{n/2} \bigg(\int_{\mathbb R^n} \frac1{1+|\xi|^{2n}}\ \mathrm d\xi \bigg)^{\frac12}<+\infty.$$
Since the sequence $\mathcal M^{TF}$ is log-convex, there exists a positive constant $A_{TF}>0$ such that 
$$\forall k \geq 0, \quad M^{TF}_k \leq A_{TF} M^{TF}_{k+1},$$
and we get from \eqref{sobolev_embedding0} that for all $\beta \in \mathbb N^n$,
$$\int_{\mathbb R^n} \big\vert\xi^{\beta} \mathscr F\big( e^{-TF(\vert D_x\vert)}g\big)(\xi)\big\vert\ \mathrm d\xi \leq C_{n, TF} M^{TF}_{|\beta|+n} \|g\|_{L^2(\mathbb{R}^n)},$$
with $C_{n, TF} = C_n(1+ A^{2n}_{TF})^{\frac12}$. It then follows from the Fourier inverse formula that
$$\forall\beta\in\mathbb N^n,\quad\partial^{\beta}_x\big(e^{-TF(\vert D_x\vert)}g\big) \in L^{\infty}(\mathbb{R}^n),$$ 
and that there exists a positive constant $C'_{n,TF}>0$ such that
\begin{equation}\label{QA_estimate0}
	\forall\beta\in\mathbb N^n,\quad \big\Vert\partial^{\beta}_x \big(e^{-TF(\vert D_x\vert)}g\big)\big\Vert_{L^{\infty}(\mathbb R^n)} \leq C'_{n,TF} M^{TF}_{|\beta|+n} \|g\|_{L^2(\mathbb R^n)}.
\end{equation}
Notice that for all positive real number $\lambda>0$, the sequence $\mathcal M_{\lambda}$ whose elements $M_{\lambda,k}$ are given for all $k\geq0$ by 
\begin{equation}\label{23122020E1}
	M_{\lambda,k} = C'_{n, TF}\lambda^kM^{TF}_{k+n}\Vert g\Vert_{L^2(\mathbb R^n)},
\end{equation}
also defines a log-convex sequence which satisfies
$$\sum_{k = 0}^{+\infty} \frac{M_{\lambda,k}}{M_{\lambda,k+1}} = \frac1{\lambda}\sum_{k = n}^{+\infty} \frac{M^{TF}_k}{M^{TF}_{k+1}}.$$
Since the sequence $\mathcal M^F$ is quasi-analytic, so is the sequence $\mathcal M^{TF}$ according to Lemma~\ref{17122020L2} and therefore, Theorem \ref{Den_Carl_thm} implies that
$$\sum_{k = 0}^{+\infty} \frac{M^{TF}_k}{M^{TF}_{k+1}}=+\infty.$$
A second application of this theorem shows that the log-convex sequence $\mathcal M_{\lambda}$ is also quasi-analytic. This fact, combined with the estimate \eqref{QA_estimate0} and Proposition \ref{23122020P1}, implies that for all $g\in L^2(\mathbb R^n)$, the function $e^{-TF(\vert D_x\vert)}g$ belongs to the quasi-analytic class $\mathcal C_{\mathcal M_1}(\mathbb R^n)$.

Now, let us check that the unique continuation property holds. To that end, we make a weaker assumption by considering a function $g \in L^2(\mathbb R^n)$ not identically equal to zero such that 
\begin{equation}\label{06012020E1}
	\mathbbm1_{\omega}e^{-TF(\vert D_x\vert)}g=0.
\end{equation}
The purpose is to prove that $e^{-TF(|D_x|)}g=0$. Since $\Leb(\omega)>0$, if we consider the cube $Q=(-1,1)^n$, then
$$\exists\lambda_0>0,\forall\lambda\geq\lambda_0,\quad \Leb(\omega \cap \lambda Q)>0.$$
Let us consider $\lambda \geq \lambda_0$ and define the function $g_{\lambda}\in L^2(\mathbb R^n)$ by
$$g_{\lambda}(x) = \big(e^{-TF(|D_x|)} g\big) (\lambda x),\quad x\in\mathbb R^n.$$
It readily follows from the estimate \eqref{QA_estimate0} that for all $\beta \in \mathbb N^n$,
$$\big\Vert\partial^{\beta}_x g_{\lambda}\big\Vert_{L^{\infty}(Q)} \leq \lambda^{|\beta|} \big\Vert\partial^{\beta}_x \big(e^{-TF(\vert D_x\vert)}g\big)\big\Vert_{L^{\infty}(\mathbb{R}^n)} \leq C'_{n, TF}\lambda^{|\beta|} M^{TF}_{|\beta|+n} \|g\|_{L^2(\mathbb R^n)}.$$
Since the sequences $\mathcal M_{\lambda}$ defined in \eqref{23122020E1} are quasi-analytic and that $g_{\lambda}\in\mathcal C_{\mathcal M_{\lambda}}(\mathbb R^n)$ from the above estimate, Proposition~\ref{15122020P1} implies that for all $\lambda \geq \lambda_0$, there exists a positive constant $C = C(\lambda,\Vert g_{\lambda}\Vert_{L^{\infty}(Q)},\omega,n)>0$
%$C=C(\lambda, \omega, \mathcal{M}^{TF}, n) >0$ 
such that
$$\|g_{\lambda}\|_{L^2(Q)}  \leq C \|g_{\lambda} \|_{L^2((\lambda^{-1}\omega) \cap Q)}.$$
As a consequence, we get from the assumption \eqref{06012020E1} that for all $\lambda\geq\lambda_0$,
$$\big\Vert e^{-TF(|D_x|)} g\big\Vert_{L^2(\lambda Q)} \leq C\big\Vert e^{-TF(|D_x|)}g\big\Vert_{L^2(\omega \cap \lambda Q)} =0.$$
It follows that $\mathbbm{1}_{\lambda Q} e^{-TF(|D_x|)}g =0$ for all $\lambda \geq \lambda_0$, and therefore $e^{-TF(|D_x|)}g=0$. Then, the unique continuation property holds and the evolution equation \eqref{01062020E1} is approximately null-controllable from $\omega$ at time $T>0$.

\subsection{Proof of Theorem \ref{21102020T1}: sufficient part} In this subsection, we aim at proving that when the support control $\omega$ is thick, then the equation \eqref{diff_eq0} is cost-uniformly approximately null-controllable from $\omega$ in any positive time $T>0$. To that end, we shall use the following observability characterization of the cost-uniform approximate null-controllability:

\begin{thm}[Proposition 6 in \cite{MR4041279}]\label{21102020T2} Let $T>0$ be a positive time. The following assertions are equivalent:
\begin{enumerate}[label=$(\roman*)$,leftmargin=* ,parsep=2pt,itemsep=0pt,topsep=2pt]
	\item The evolution system \eqref{01062020E1} is cost-uniformly approximately null-controllable from $\omega$ in time $T>0$.
	\item For all $\varepsilon\in(0,1)$, there exists a positive constant $C_{\varepsilon,T}>0$ such that for all $g\in L^2(\mathbb R^n)$,
	\begin{equation}\label{06112020E6}
	\big\Vert e^{-TF(\vert D_x\vert)}g\big\Vert^2_{L^2(\mathbb R^n)}\le C_{\varepsilon, T}\int_0^T\big\Vert e^{-tF(\vert D_x\vert)}g\big\Vert^2_{L^2(\omega)}\ \mathrm dt 
	+ \varepsilon\Vert g\Vert^2_{L^2(\mathbb R^n)}.
	\end{equation}
\end{enumerate}
\end{thm}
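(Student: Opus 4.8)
The plan is to prove the equivalence by the classical duality (Hilbert Uniqueness Method) between cost-uniform approximate controllability and a weak observability inequality; the argument is clean here precisely because the control operator is \emph{bounded}, since $\mathbbm 1_\omega$ maps $L^2(\omega)$ boundedly into $L^2(\mathbb R^n)$ by extension by zero.

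\textbf{Step 1: functional-analytic reformulation.} I would first set $S(t) = e^{-tF(\vert D_x\vert)}$ for $t\geq0$. Since $F$ is bounded from below, $S(t)$ is the Fourier multiplier with the bounded, real-valued symbol $e^{-tF(\vert\xi\vert)}$, so $(S(t))_{t\geq0}$ is a strongly continuous semigroup on $L^2(\mathbb R^n)$ with $\Vert S(t)\Vert_{\mathcal L(L^2(\mathbb R^n))}\leq e^{t\vert\inf F\vert}$, and each $S(t)$ is self-adjoint. The mild solution of \eqref{01062020E1} at time $T$ then reads $f(T) = S(T)f_0 + \mathcal Bh$, where $\mathcal B : L^2((0,T);L^2(\omega))\rightarrow L^2(\mathbb R^n)$, defined by $\mathcal Bh = \int_0^T S(T-s)\mathbbm 1_\omega h(s)\ \mathrm ds$, is bounded by Cauchy--Schwarz together with the above bound on $\Vert S(t)\Vert$. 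Using the self-adjointness of $S(T-s)$ one computes $(\mathcal B^*g)(s) = (S(T-s)g)\vert_\omega$ for $s\in(0,T)$, so that, after the change of variable $t = T-s$,
$$\Vert \mathcal B^*g\Vert^2_{L^2((0,T);L^2(\omega))} = \int_0^T\big\Vert e^{-tF(\vert D_x\vert)}g\big\Vert^2_{L^2(\omega)}\ \mathrm dt, \qquad \Vert S(T)^*g\Vert_{L^2(\mathbb R^n)} = \big\Vert e^{-TF(\vert D_x\vert)}g\big\Vert_{L^2(\mathbb R^n)},$$
which are exactly the two sides of the inequality in assertion $(ii)$.

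\textbf{Step 2: an abstract duality lemma.} I would then isolate, and prove separately, the following elementary fact: given Hilbert spaces $E,F,G$, bounded operators $A\in\mathcal L(E,G)$ and $\mathcal B\in\mathcal L(F,G)$, and constants $\kappa,\delta\geq0$, the property
$$\forall e\in E,\ \exists h\in F,\quad \Vert h\Vert_F\leq\kappa\Vert e\Vert_E\ \text{ and }\ \Vert Ae - \mathcal Bh\Vert_G\leq\delta\Vert e\Vert_E$$
is equivalent to $\Vert A^*g\Vert_E\leq\kappa\Vert\mathcal B^*g\Vert_F + \delta\Vert g\Vert_G$ for all $g\in G$. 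The proof I have in mind: by homogeneity the first property says exactly that $A(\mathbb B_E)\subset K$, where $\mathbb B_E = \{e\in E : \Vert e\Vert_E\leq1\}$ and $K = \kappa\,\mathcal B(\mathbb B_F) + \delta\,\mathbb B_G$. The set $K$ is convex and, as the Minkowski sum of two weakly compact convex sets (closed balls of Hilbert spaces are weakly compact and $\mathcal B$ is weak-to-weak continuous), it is weakly closed, hence closed; therefore $A(\mathbb B_E)\subset K$ holds if and only if the support functions $\sigma_S(g) = \sup_{x\in S}\Reelle\langle x,g\rangle$ satisfy $\sigma_{A(\mathbb B_E)}\leq\sigma_K$ on $G$ (Hahn--Banach separation), and a direct computation gives $\sigma_{A(\mathbb B_E)}(g) = \Vert A^*g\Vert_E$ and $\sigma_K(g) = \kappa\Vert\mathcal B^*g\Vert_F + \delta\Vert g\Vert_G$. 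I expect this weak-closedness of $K$ to be the only genuinely delicate point: it is what lets the support-function criterion describe the inclusion $A(\mathbb B_E)\subset K$ itself, and not merely that of its closure (which would only yield an approximate controllability statement).

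\textbf{Step 3: conclusion.} Finally I would apply Step 2 with $E = G = L^2(\mathbb R^n)$, $F = L^2((0,T);L^2(\omega))$, $A = -S(T)$, $\mathcal B$ as in Step 1, $\delta = \varepsilon$ and $\kappa = \sqrt{C_{\varepsilon,T}}$; then $Ae - \mathcal Bh = -f(T)$ when $e = f_0$, so assertion $(i)$ is exactly the validity, for every $\varepsilon\in(0,1)$, of the first property of Step 2, which by the lemma and Step 1 is equivalent to
$$\big\Vert e^{-TF(\vert D_x\vert)}g\big\Vert_{L^2(\mathbb R^n)}\leq \sqrt{C_{\varepsilon,T}}\Big(\int_0^T\big\Vert e^{-tF(\vert D_x\vert)}g\big\Vert^2_{L^2(\omega)}\ \mathrm dt\Big)^{1/2} + \sqrt\varepsilon\,\Vert g\Vert_{L^2(\mathbb R^n)},\qquad g\in L^2(\mathbb R^n).$$
To match assertion $(ii)$ verbatim, I would note that squaring this inequality and using $(a+b)^2\leq2a^2+2b^2$ gives \eqref{06112020E6} with $C_{\varepsilon,T},\varepsilon$ replaced by $2C_{\varepsilon,T},2\varepsilon$, while conversely \eqref{06112020E6} yields the displayed linear inequality via $\sqrt{a+b}\leq\sqrt a+\sqrt b$; since both $(i)$ and $(ii)$ range over all $\varepsilon\in(0,1)$, these constant adjustments are harmless, and the equivalence $(i)\Leftrightarrow(ii)$ follows.
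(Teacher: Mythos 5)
Your proposal is correct, but it takes a genuinely different route from the paper: the paper does not prove Theorem~\ref{21102020T2} at all, it simply quotes it as Proposition 6 of \cite{MR4041279}, and the usual proof of this type of equivalence (cf.\ Lemma 3.4 in \cite{MR2679651}) is variational -- one direction is the Cauchy--Schwarz duality computation, and the converse constructs, for each initial datum, a control as the minimizer of a penalized quadratic functional of HUM type, the inequality \eqref{06112020E6} supplying coercivity and the cost bound. You instead reduce everything to an abstract range-inclusion lemma: the reachability property with cost $\kappa=\sqrt{C_{\varepsilon,T}}$ and error $\delta$ is rewritten as $A(\mathbb B_E)\subset \kappa\,\mathcal B(\mathbb B_F)+\delta\,\mathbb B_G$, and this inclusion is characterized through support functions, the one delicate point being that the Minkowski sum of the two weakly compact convex sets is weakly closed -- which you correctly isolate, and which is precisely what turns the separation argument into exact (not merely approximate) solvability for each $f_0$ with the stated constants. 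This Dolecki--Russell-type argument yields both implications simultaneously and is particularly clean here because the control operator $\mathcal B$ is bounded; the variational proof, by contrast, has the advantage of producing an explicit control (the minimizer) rather than a pure existence statement. One cosmetic slip: having chosen $\delta=\varepsilon$, the dual inequality you display should carry $\varepsilon\Vert g\Vert_{L^2(\mathbb R^n)}$ rather than $\sqrt\varepsilon\,\Vert g\Vert_{L^2(\mathbb R^n)}$ (equivalently, take $\delta=\sqrt\varepsilon$); either normalization is harmless, as your closing remark about both assertions being quantified over all $\varepsilon\in(0,1)$ already absorbs such constant adjustments.
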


According to Theorem \ref{21102020T2}, we need to prove that the observability estimate \eqref{06112020E6} holds. The first step consists in establishing the following uncertainty principle: 

\begin{prop}\label{uncertainty_principle} For all time $T>0$ and for all $\varepsilon>0$, there exists a positive constant $C_{\varepsilon, T} >0$ such that for all $t\geq T/2$ and $g\in L^2(\mathbb R^n)$,
$$\big\Vert e^{-t F(\vert D_x\vert)}g\big\Vert^2_{L^2(\mathbb R^n)} \le C_{\varepsilon, T} \big\|e^{-t F(\vert D_x\vert)} g \big\|^2_{L^2(\omega)} + \varepsilon e^{-2t\inf F}\Vert g\Vert^2_{L^2(\mathbb R^n)}.$$
\end{prop}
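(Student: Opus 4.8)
The plan is to combine the quasi-analytic regularity of the solutions of the free equation, established (with $T$ in place of $t$) in the proof of Proposition~\ref{approximate_controllability_weak}, with the local unique continuation estimate for quasi-analytic classes given by Proposition~\ref{15122020P1}; the thickness of $\omega$ enters only through a covering of $\mathbb R^n$ by unit cubes on each of which $\omega$ has measure at least $\gamma$. First I would reduce to the case $\inf F = 0$: setting $G = F - \inf F \geq 0$, one has $e^{-tF(\vert D_x\vert)} = e^{-t\inf F}\,e^{-tG(\vert D_x\vert)}$, so the scalar factor $e^{-2t\inf F}$ divides out of every term and the claimed inequality becomes $\Vert e^{-tG(\vert D_x\vert)}g\Vert^2_{L^2(\mathbb R^n)} \leq C_{\varepsilon,T}\Vert e^{-tG(\vert D_x\vert)}g\Vert^2_{L^2(\omega)} + \varepsilon\Vert g\Vert^2_{L^2(\mathbb R^n)}$. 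The function $G$ is continuous, non-negative, satisfies $\lim_{+\infty}G = +\infty$, and $\mathcal M^G$ is quasi-analytic by Lemma~\ref{17122020L2}; from now on I assume $F \geq 0$, $\lim_{+\infty}F = +\infty$, fix $t \geq T/2$, and write $u = e^{-tF(\vert D_x\vert)}g$.

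Next I would show that $u$ lies in a fixed quasi-analytic Denjoy--Carleman class uniformly in $t \geq T/2$ and $g$. Since $F \geq 0$, one has $M^{tF}_k \leq M^{(T/2)F}_k$ for all $t \geq T/2$, and $\mathcal M^{(T/2)F}$ is quasi-analytic by Lemma~\ref{17122020L2}. Reproducing the Plancherel and Fourier-inversion estimates of the proof of Proposition~\ref{approximate_controllability_weak} (see \eqref{sobolev_embedding0} and \eqref{QA_estimate0}) with $\mathcal M^{(T/2)F}$ in place of $\mathcal M^{TF}$, one obtains a log-convex quasi-analytic sequence $\mathcal N = \mathcal N(n, T, F)$, independent of $t$ and $g$, and a constant $C_{n,T} > 0$ such that $\Vert\partial^\beta_x u\Vert_{L^\infty(\mathbb R^n)} \leq C_{n,T}\,N_{\vert\beta\vert}\Vert g\Vert_{L^2(\mathbb R^n)}$ for all $\beta \in \mathbb N^n$. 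Rescaling by $x \mapsto Lx$ so that $\tilde\omega := L^{-1}\omega$ is $\gamma$-thick at scale $1$, the dilated function $\tilde u := u(L\,\cdot)$ satisfies $\Vert\partial^\beta_x\tilde u\Vert_{L^\infty(\mathbb R^n)} \leq C_{n,T}\,L^{\vert\beta\vert}N_{\vert\beta\vert}\Vert g\Vert_{L^2(\mathbb R^n)}$, and the sequence $(L^kN_k)_k$ is again log-convex and quasi-analytic by Denjoy--Carleman's theorem.

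I would then cover $\mathbb R^n$ by the disjoint unit cubes $Q_m = m + (0,1)^n$, $m \in \mathbb Z^n$, each satisfying $\Leb(\tilde\omega \cap Q_m) \geq \gamma$ by thickness at scale $1$. Fixing a threshold $t_0 = t_0(\varepsilon, T) > 0$ to be chosen later, I would call $Q_m$ \emph{good} when $\Vert\tilde u\Vert_{L^\infty(Q_m)} \geq t_0\Vert g\Vert_{L^2(\mathbb R^n)}$ and \emph{bad} otherwise. On each good cube, applying Proposition~\ref{15122020P1} to the appropriate translate of $\tilde u/\Vert g\Vert_{L^2(\mathbb R^n)}$, with measure parameter $\gamma$ and threshold $t_0$, and then summing over all good cubes using their disjointness, yields $\sum_{Q_m\text{ good}}\Vert\tilde u\Vert^2_{L^2(Q_m)} \leq C(\gamma, t_0, (L^kN_k)_k, n)^2\,\Vert\tilde u\Vert^2_{L^2(\tilde\omega)}$; undoing the dilation, this is the required observation term, with $C_{\varepsilon,T}$ a dimensional multiple of $C(\gamma, t_0, (L^kN_k)_k, n)^2$.

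The main obstacle, which I expect to be the technical heart of the argument, is to bound the total contribution $\sum_{Q_m\text{ bad}}\Vert\tilde u\Vert^2_{L^2(Q_m)}$ of the bad cubes by $\varepsilon\Vert g\Vert^2_{L^2(\mathbb R^n)}$. The crude bound $\Vert\tilde u\Vert^2_{L^2(Q_m)} \leq t_0^2\Vert g\Vert^2_{L^2(\mathbb R^n)}$ on each bad cube does not suffice, since there are in general infinitely many bad cubes (one has $u \in C_0(\mathbb R^n)$, so every sufficiently distant cube is bad), and for the same reason the naive splitting into low and high frequencies combined only with Theorem~\ref{Kovrij} is doomed to fail when $F$ grows slowly. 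Resolving this requires using that $u$ is not an arbitrary quasi-analytic $L^2$ function but the image of $g$ under the strongly smoothing multiplier $e^{-tF(\vert D_x\vert)}$: fixing $R = R_{\varepsilon, T}$ so large that $e^{-2tF(\vert\xi\vert)} \leq \varepsilon/2$ for all $\vert\xi\vert > R$ and all $t \geq T/2$ — possible because $\lim_{+\infty}F = +\infty$ — Plancherel's theorem gives $\Vert(1-K_R)u\Vert^2_{L^2(\mathbb R^n)} \leq (\varepsilon/2)\Vert g\Vert^2_{L^2(\mathbb R^n)}$, with $K_R$ the orthogonal projection of $L^2(\mathbb R^n)$ onto $\{f : \Supp\widehat f \subset \overline{B(0,R)}\}$, so that modulo this small tail the bad cubes of $u$ are, up to the factor $t_0$, the bad cubes of the band-limited function $K_Ru$, whose number is controlled by the quantitative Bernstein-type estimates underlying Theorem~\ref{Kovrij}. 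Making this bookkeeping precise, then choosing $t_0$ and $R$ appropriately in terms of $\varepsilon$ and $T$, and finally reinstating the factor $e^{-2t\inf F}$, would complete the proof.
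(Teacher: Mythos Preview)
Your reduction to $G=F-\inf F$ and your treatment of the good cubes are fine (indeed your use of the global $L^\infty$ derivative bounds \eqref{QA_estimate0} to put $u/\Vert g\Vert_{L^2(\mathbb R^n)}$ in a fixed quasi-analytic class, and then apply Proposition~\ref{15122020P1} with threshold $t_0$, is a legitimate alternative to the paper's normalisation). The genuine gap is in the bad cubes, and your proposed frequency cutoff does not close it. After splitting $u = K_R u + (1-K_R)u$ you only control $\sum_{\text{bad}}\Vert(1-K_R)u\Vert^2_{L^2(Q_m)}$ globally; for the band-limited part you still need $\sum_{\text{bad}}\Vert K_Ru\Vert^2_{L^2(Q_m)}\le\varepsilon\Vert g\Vert^2_{L^2(\mathbb R^n)}$, but your notion of ``bad'' says nothing about $K_Ru$ on $Q_m$ (on a bad cube $K_Ru$ may well be large, being cancelled by $(1-K_R)u$), and neither Theorem~\ref{Kovrij} nor any Bernstein inequality bounds the \emph{number} of cubes on which a band-limited $L^2$ function is small --- that number is typically infinite. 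So the ``bookkeeping'' you defer is not bookkeeping but the missing idea.

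The paper avoids this obstacle by choosing a different good/bad dichotomy, adapted not to the size of $u$ but to the size of its \emph{derivatives relative to $u$ itself on the cube}: $Q(\alpha)$ is declared good when
\[
\forall\beta\in\mathbb N^n,\quad \Vert\partial^\beta_x u\Vert^2_{L^2(Q(\alpha))}\le \varepsilon^{-1}2^{2\vert\beta\vert+n}\big(M^{(T/2)G}_{\vert\beta\vert}\big)^2\,\Vert u\Vert^2_{L^2(Q(\alpha))}.
\]
With this definition the bad-cube sum is immediate: on a bad cube the failure of the above for some $\beta_0$ gives $\Vert u\Vert^2_{L^2(Q(\alpha))}\le \varepsilon\,2^{-2\vert\beta_0\vert-n}(M^{(T/2)G}_{\vert\beta_0\vert})^{-2}\Vert\partial^{\beta_0}_x u\Vert^2_{L^2(Q(\alpha))}$, and summing over all $\beta$ and all bad cubes, then invoking the global Bernstein estimate $\Vert\partial^\beta_x u\Vert^2_{L^2(\mathbb R^n)}\le (M^{(T/2)G}_{\vert\beta\vert})^2\Vert g\Vert^2_{L^2(\mathbb R^n)}$ (Plancherel), yields exactly $\varepsilon\Vert g\Vert^2_{L^2(\mathbb R^n)}$. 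On the good cubes the defining inequality, combined with a Sobolev embedding $W^{n,2}\hookrightarrow L^\infty$, shows that the \emph{locally} renormalised function $\varphi = L^{n/2}u(\alpha+L\cdot)/\Vert u\Vert_{L^2(Q(\alpha))}$ lies in a fixed quasi-analytic class and satisfies $\Vert\varphi\Vert_{L^\infty}\ge1$, so Proposition~\ref{15122020P1} applies with a constant independent of $\alpha$ and $g$. No frequency cutoff is needed.
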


\begin{proof} This proof is adapted from the one of \cite{JM} (Theorem 1.3). Let $T>0$ and $\varepsilon>0$ fixed. Considering the function $G = F-\inf F$, we begin by proving that there exists a positive constant $C_{\varepsilon, T} >0$ such that for all $t\geq T/2$ and $g\in L^2(\mathbb R^n)$,
\begin{equation}\label{12042021E1}
	\big\Vert e^{-tG(\vert D_x\vert)}g\big\Vert^2_{L^2(\mathbb R^n)} \le C_{\varepsilon, T} \big\Vert e^{-tG(\vert D_x\vert)}g\big\Vert^2_{L^2(\omega)} + \varepsilon\Vert g\Vert^2_{L^2(\mathbb R^n)}.
\end{equation}
Once the estimate \eqref{12042021E1} is established, we deduce that for all $t\geq T/2$ and $g\in L^2(\mathbb R^n)$,
$$\big\Vert e^{-tF(\vert D_x\vert)}g\big\Vert^2_{L^2(\mathbb R^n)} \le C_{\varepsilon, T}\big\Vert e^{-tF(\vert D_x\vert)}g\big\Vert^2_{L^2(\omega)} + \varepsilon e^{-2t\inf F}\Vert g\Vert^2_{L^2(\mathbb R^n)},$$
and the proof of Proposition \ref{uncertainty_principle} is therefore ended.

Let us prove the estimate \eqref{12042021E1}. Setting $T'=T/2$, we first check, using Parseval's formula, that the following Bernstein type estimates hold for all $g\in L^2(\mathbb R^n)$, $t\geq T'$ and $\beta\in\mathbb N^n$:
\begin{align}\label{bernstein_estimate}
	\big\Vert\partial^{\beta}_x(e^{-tG(\vert D_x\vert)}g)\big\Vert^2_{L^2(\mathbb R^n)} 
	& = \frac1{(2\pi)^n}\int_{\mathbb R^n}e^{-2tG(\vert\xi\vert)}\vert\xi^{\beta}\vert^2\vert\widehat g(\xi)\vert^2\ \mathrm d\xi \\[5pt]
	& \le\frac1{(2\pi)^n}\int_{\mathbb R^n}e^{-2T'G(\vert\xi\vert)}\vert\xi\vert^{2\vert\beta\vert}\vert\widehat g(\xi)\vert^2\ \mathrm d\xi \nonumber \\[5pt]
	& \le\Big(\sup_{r\geq0}r^{\vert\beta\vert}e^{-T'G(r)}\Big)^2\Vert g\Vert^2_{L^2(\mathbb R^n)} \nonumber \\[5pt]
	& = (M^{T'G}_{\vert\beta\vert})^2\Vert g\Vert^2_{L^2(\mathbb R^n)}. \nonumber
\end{align}
These estimates will be central later to control the functions $e^{-tG(\vert D_x\vert)}g$ on bad cubes, which we are about to define. Since $\omega$ is a thick set, by definition, there exist $\gamma\in(0,1]$ and a length $L>0$ such that
$$\forall x\in\mathbb R^n, \quad\Leb(\omega\cap(x + [0,L]^n))\geq\gamma L^n.$$
For $\alpha\in L\mathbb Z^n$, let us define $Q(\alpha)$ as the following cube
$$Q(\alpha) = \alpha + [0,L]^n.$$
Notice that the family of cubes $(Q(\alpha))_{\alpha\in L\mathbb Z^n}$ covers the space $\mathbb R^n$:
\begin{equation}\label{06112020E5}
	\mathbb R^n = \bigcup_{\alpha\in L\mathbb Z^n}Q(\alpha).
\end{equation}
Let $g \in L^2(\mathbb R^n)$ and $t \geq T'$, where we recall that $T'=T/2$. A cube $Q(\alpha)$ is said to be good if it satisfies the property
\begin{equation}\label{08012020E1}
	\forall\beta\in\mathbb N^n, \quad\big\Vert\partial^{\beta}_x(e^{-tG(\vert D_x\vert)}g)\big\Vert^2_{L^2(Q(\alpha))}\le\frac{2^{2\vert\beta\vert+n}}{\varepsilon}(M^{T'G}_{\vert\beta\vert})^2\big\Vert e^{-tG(\vert D_x\vert)}g\big\Vert^2_{L^2(Q(\alpha))},
\end{equation}
where the positive real numbers $M^{T'G}_k$ are the elements of the sequence $\mathcal M^{T'G}$ generated by the function $T'G$. We recall that for all $k\geq0$, 
$$M^{T'G}_k=\sup_{r\geq0}r^ke^{-T'G(r)}.$$
Naturally, a cube $Q(\alpha)$ is said to be bad if it is not good, that is, when
\begin{equation}\label{bad_cube}
	\exists\beta_0\in\mathbb N^n,\quad\big\Vert\partial^{\beta_0}_x(e^{-tG(\vert D_x\vert)}g)\big\Vert^2_{L^2(Q(\alpha))}>\frac{2^{2\vert\beta_0\vert+n}}{\varepsilon}(M^{T'G}_{\vert\beta_0\vert})^2\big\Vert e^{-tG(\vert D_x\vert)}g\big\Vert^2_{L^2(Q(\alpha))}.
\end{equation}
Notice from the covering property \eqref{06112020E5} that
\begin{equation}\label{gb1}
	\big\Vert e^{-tG(\vert D_x\vert)}g\big\Vert^2_{L^2(\mathbb R^n)} = \sum_{\text{good cubes}} \big\Vert e^{-tG(\vert D_x\vert)}g\big\Vert^2_{L^2(Q(\alpha))} +\sum_{\text{bad cubes}}\big\Vert e^{-tG(\vert D_x\vert)}g\big\Vert^2_{L^2(Q(\alpha))}.
\end{equation}
We shall estimate independently the two terms of the right-hand side of this equality. Let us begin with the second one. %Recalling that $t\geq T'$, we get from Parseval's theorem that for all $\beta\in\mathbb N^n$,
%\begin{align}\label{bernstein_estimate}
%	\big\Vert\partial^{\beta}_x(e^{-tG(\vert D_x\vert)}g)\big\Vert^2_{L^2(\mathbb R^n)} 
%	& = \frac1{(2\pi)^n}\int_{\mathbb R^n}e^{-2tG(\vert\xi\vert)}\vert\xi^{\beta}\vert^2\vert\widehat g(\xi)\vert^2\ \mathrm d\xi \\[5pt]
%	& \le\frac1{(2\pi)^n}\int_{\mathbb R^n}e^{-2T'G(\vert\xi\vert)}\vert\xi\vert^{2\vert\beta\vert}\vert\widehat g(\xi)\vert^2\ \mathrm d\xi \nonumber \\[5pt]
%	& \le\Big(\sup_{r\geq0}r^{\vert\beta\vert}e^{-T'G(r)}\Big)^2\Vert g\Vert^2_{L^2(\mathbb R^n)} \nonumber \\[5pt]
%	& = (M^{T'G}_{\vert\beta\vert})^2\Vert g\Vert^2_{L^2(\mathbb R^n)}. \nonumber
%\end{align}
It follows from the definition \eqref{bad_cube} that if $Q(\alpha)$ is a bad cube, there exists $\beta_0\in\mathbb N^n$ such that
\begin{align}\label{bad_cube2}
	\big\Vert e^{-tG(\vert D_x\vert)}g\big\Vert^2_{L^2(Q(\alpha))} & \le\frac{\varepsilon}{2^{2\vert\beta_0\vert+n}(M^{T'G}_{\vert\beta_0\vert})^2}\big\Vert\partial^{\beta_0}_x(e^{-tG(\vert D_x\vert)}g)\big\Vert^2_{L^2(Q(\alpha))} \\[5pt]
	& \le\sum_{\beta\in\mathbb N^n}\frac{\varepsilon}{2^{2\vert\beta\vert+n}(M^{T'G}_{\vert\beta\vert})^2}\big\Vert\partial^{\beta}_x(e^{-tG(\vert D_x\vert)}g)\big\Vert^2_{L^2(Q(\alpha))}. \nonumber
\end{align}
By summing over all the bad cubes, we obtain from \eqref{bernstein_estimate} and \eqref{bad_cube2} that 
\begin{align}\label{12042021E2}
	\int_{\bigcup_{\text{bad cubes}}} \big\vert e^{-tG(\vert D_x\vert)}g(x)\big\vert^2\ \mathrm dx
	& \le\varepsilon\sum_{\text{bad cubes}}\sum_{\beta\in\mathbb N^n}\frac1{2^{2\vert\beta\vert+n}(M^{T'G}_{\vert\beta\vert})^2}\big\Vert\partial^{\beta}_x(e^{-tG(\vert D_x\vert)}g)\big\Vert^2_{L^2(Q(\alpha))} \\[5pt]
	& \le\varepsilon\sum_{\beta\in\mathbb N^n}\frac 1{2^{2\vert\beta\vert+n}(M^{T'G}_{\vert\beta\vert})^2}\big\Vert\partial^{\beta}_x(e^{-tG(\vert D_x\vert)}g)\big\Vert^2_{L^2(\mathbb R^n)} \nonumber \\[5pt]
	& \leq\varepsilon\sum_{\beta\in\mathbb N^n}\frac1{2^{2\vert\beta\vert+n}}\Vert g\Vert^2_{L^2(\mathbb R^n)} \nonumber \\[5pt]
	& \le \varepsilon\Vert g \Vert^2_{L^2(\mathbb R^n)}, \nonumber
\end{align}
since
$$\sum_{\beta\in\mathbb N^n}\frac1{2^{2\vert\beta\vert+n}}=\sum_{k=0}^{+\infty}\binom{k+n-1}k\frac1{2^{2k+n}}\le\sum_{k=0}^{+\infty}2^{k+n-1}\frac1{2^{2k+n}} =1.$$
It remains to estimate the first term of the right-hand side of the equality \eqref{gb1}. Let $Q(\alpha)$ be a good cube. Assuming that the function $e^{-tG(\vert D_x\vert)}g$ is not identically equal to zero on the cube $Q(\alpha)$, we define the following function $\varphi:[0,1]^n\rightarrow\mathbb C$ by
\begin{equation}\label{function_aux}
	\varphi(z)= \frac{L^{\frac n2}\big(e^{-tG(\vert D_x\vert)}g\big)(\alpha + Lz)}{\Vert e^{-tG(\vert D_x\vert)}g\Vert_{L^2(Q(\alpha))}},\quad z\in[0,1]^n.
\end{equation} 
As the cube $[0,1]^n$ satisfies the cone condition, the following Sobolev embedding holds
$$W^{n,2}([0,1]^n) \hookrightarrow L^{\infty}([0,1]^n),$$
see e.g.~\cite{adams} (Theorem~4.12). This implies that there exists a positive constant $C_n>0$, only depending on the dimension $n \geq 1$, such that 
\begin{equation}\label{sobolev}
	\forall u \in W^{n,2}([0,1]^n), \quad \Vert u\Vert_{L^{\infty}([0,1]^n)}\le C_n\Vert u\Vert_{W^{n,2}([0,1]^n)}.
\end{equation}
From the definition \eqref{08012020E1} of good cube and the estimate \eqref{sobolev}, it follows that for all $\beta\in\mathbb N^n$,
\begin{align}
	\big\Vert\partial^{\beta}_x\varphi\big\Vert^2_{L^{\infty}([0,1]^n)}
	& \le C_n^2\sum_{\tilde{\beta}\in\mathbb N^n,\ \vert\tilde{\beta}\vert\le n}\big\Vert\partial^{\beta+\tilde{\beta}}_x\varphi\big\Vert^2_{L^2([0,1]^n)} \\
	 & = \frac{C_n^2}{\Vert e^{-tG(\vert D_x\vert)}g\Vert_{L^2(Q(\alpha))}^2}L^{2\vert\beta\vert}\sum_{\tilde{\beta}\in\mathbb N^n,\ \vert\tilde{\beta}\vert\le n}L^{2\vert\tilde{\beta}\vert}\big\Vert\partial^{\beta+\tilde{\beta}}_xe^{-tG(\vert D_x\vert)}g\big\Vert^2_{L^2(Q(\alpha))} \nonumber \\[5pt]
	& \le C_n^2 (2L)^{2\vert\beta\vert}2^n\varepsilon^{-1}\sum_{\tilde{\beta}\in\mathbb N^n,\ \vert\tilde{\beta}\vert\le n}(2L)^{2\vert\tilde{\beta}\vert}(M^{T'G}_{\vert\beta\vert+\vert\tilde{\beta}\vert})^2. \nonumber
\end{align}
Since the sequence $\mathcal M^{T'G}$ is log-convex, there exists a positive constant $A_{T'G}>0$ such that 
$$\forall k\geq0, \quad M^{T'G}_k \le A_{T'G} M^{T'G}_{k+1}.$$
Therefore, there exists a new positive constant $D_{n,L} >0$ depending on $n$ and $L$, such that for all $\beta\in\mathbb N^n$,
\begin{equation}\label{21122020E2}
	\big\Vert\partial^{\beta}_x\varphi\big\Vert^2_{L^{\infty}([0,1]^n)}\le\varepsilon^{-1} A^{2n}_{T'G} D_{n,L}^2(2L)^{2\vert\beta\vert}(M^{T'G}_{\vert\beta\vert+n})^2.
\end{equation}
Let us recall that by assumption, the sequence $\mathcal M^F$ associated with the function $F$ defines a quasi-analytic sequence of positive real numbers, and so is $\mathcal M^{T'G}$ according to Lemma \ref{17122020L2}. Notice that the slightly modified sequence $\mathcal M$, whose elements $M_k$ are given for all $k\geq0$ by 
$$M_k = \varepsilon^{-{1/2}}A^n_{T'G} D_{n,L}(2L)^kM^{T'G}_{k+n},$$ 
also defines a log-convex sequence that satisfies
$$\sum_{k = 0}^{+\infty} \frac{M_k}{M_{k+1}} = \frac1{2L}\sum_{k=n}^{+\infty}\frac{M^{T'G}_k}{M^{T'G}_{k+1}}.$$
Since the sequence $\mathcal M^{T'G}$ is log-convex and quasi-analytic, Theorem \ref{Den_Carl_thm} implies that
$$\sum_{k = 0}^{+\infty} \frac{M^{T'G}_k}{M^{T'G}_{k+1}}=+\infty,$$
and a second application of this theorem provides that the log-convex sequence $\mathcal M$ is also quasi-analytic. Since $\varphi\in\mathcal C_{\mathcal M}((0,1)^n)$, and that this function also satisfies 
$$\Vert \varphi\Vert_{L^{\infty}([0,1]^n)} = \frac{L^{\frac n2} \Vert e^{-tG(\vert D_x\vert)}g\Vert_{L^{\infty}(Q(\alpha))}}{\Vert e^{-tG(\vert D_x\vert)}g\Vert_{L^2(Q(\alpha))}}\geq1,$$
we therefore deduce from \eqref{21122020E2} and Proposition \ref{15122020P1} that there exists a positive constant $C_{\varepsilon,\gamma,L,n,T,G}>0$ independent on $g$ and $\alpha$ such that 
\begin{equation}\label{quasi_analytic0}
	\Vert\varphi\Vert_{L^2([0,1]^n)}\le C_{\varepsilon,\gamma,L,n,T,G}\Vert\varphi\Vert_{L^2(E)},
\end{equation}
where $E= \frac{\omega - \alpha}L \cap [0,1]^n \subset [0,1]^n$ satisfies $\Leb(E)\geq\gamma >0$. It follows directly from \eqref{function_aux} and \eqref{quasi_analytic0} that
\begin{equation}\label{quasi_analytic1}
	\big\Vert e^{-tG(\vert D_x\vert)}g\big\Vert_{L^2(Q(\alpha))}\le C_{\varepsilon,\gamma,L,n,T,G}\big\Vert e^{-tG(\vert D_x\vert)}g\big\Vert_{L^2(\omega\cap Q(\alpha))}.
\end{equation}
Clearly, the above estimate also holds when the function $e^{-tG(\vert D_x\vert)}g$ is identically equal to zero on the cube $Q(\alpha)$. By summing over all the good cubes, we deduce from \eqref{06112020E5} and \eqref{quasi_analytic1} that
$$\int_{\bigcup_{\text{good cubes}}Q(\alpha)}\big\vert e^{-tG(\vert D_x\vert)}g(x)\big\vert^2\ \mathrm dx \leq C^2_{\varepsilon,\gamma,L,n,T,G}\int_{\omega}\big\vert e^{-tG(\vert D_x\vert)}g(x)\big\vert^2\ \mathrm dx.$$
This estimate, together with \eqref{gb1} and \eqref{12042021E2}, imply that the estimate \eqref{12042021E1} actually holds. The proof of Proposition~\ref{uncertainty_principle} is therefore ended.
\end{proof}

We can now tackle the proof of the observability estimate \eqref{06112020E6}. Let $\varepsilon >0$ and $T>0$. It follows from Proposition \ref{uncertainty_principle} that there exists a positive constant $C_{\varepsilon,T}>0$ such that for all $t\geq T/2$ and $g\in L^2(\mathbb R^n)$,
$$\big\Vert e^{-tF(\vert D_x\vert)}g\big\Vert^2_{L^2(\mathbb R^n)} \le C_{\varepsilon, T} \big\Vert e^{-t F(\vert D_x\vert)} g \big\Vert^2_{L^2(\omega)} + \varepsilon'e^{-2t\inf F}\Vert g\Vert^2_{L^2(\mathbb R^n)},$$
where we set
$$\varepsilon' = e^{2T\inf F}\varepsilon.$$
By using that for all $s_1\geq s_2\geq0$ and $g\in L^2(\mathbb R^n)$,
$$\big\Vert e^{-s_1F(\vert D_x\vert)} g\big\Vert_{L^2(\mathbb R^n)}\le e^{(s_2-s_1)\inf F}\big\Vert e^{-s_2F(\vert D_x\vert)} g\big\Vert_{L^2(\mathbb R^n)},$$
it follows that for all $g\in L^2(\mathbb R^n)$,
\begin{align*}
	&\ \big\Vert e^{-TF(\vert D_x\vert)} g\big\Vert^2_{L^2(\mathbb R^n)} \\[5pt]
	\le &\ \frac 2T \int_{\frac T2}^Te^{2(t-T)\inf F}\big\Vert e^{-tF(\vert D_x\vert)}g\big\Vert^2_{L^2(\mathbb R^n)}\ \mathrm dt \\[5pt]
	\le &\ \frac{2C_{\varepsilon,T}}T\int_{\frac T2}^Te^{2(t-T)\inf F}\big\Vert e^{-tF(\vert D_x\vert)}g\big\Vert^2_{L^2(\omega)}\ \mathrm dt + e^{-2T\inf F}\varepsilon'\Vert g\Vert^2_{L^2(\mathbb R^n)} \\[5pt]
	\le &\ \frac{2C_{\varepsilon,T}}Te^{-T(\inf F)_-}\int_0^T\big\Vert e^{-tF(\vert D_x\vert)}g\big\Vert^2_{L^2(\omega)}\ \mathrm dt + \varepsilon\Vert g\Vert^2_{L^2(\mathbb R^n)},
\end{align*}
with $(\inf F)_- = \min(\inf F,0)$. This ends the proof of the estimate \eqref{06112020E6}.

%\begin{prop}\label{observability}
%Let $\omega\subset\mathbb R^n$ be a thick set and $T>0$. The evolution equation \eqref{diff_eq} is cost-uniformly approximate null-controllable at time $T>0$ from the control support $\omega$.
%\end{prop}

%\begin{proof}
%Thanks to Theorem \ref{21102020T2}, it is sufficient to establish that for all $\varepsilon>0$, there exists a positive constant $C_{\varepsilon, T} >0$ such that for all $f\in L^2(\mathbb R^n)$,
%$$\big\Vert e^{-TF(\vert D_x\vert)}f\big\Vert^2_{L^2(\mathbb R^n)}\le C_{\varepsilon,T}\int_0^T\big\Vert e^{-tF(\vert D_x\vert)}f\big\Vert^2_{L^2(\omega)}\ \mathrm dt + \varepsilon\Vert f \Vert^2_{L^2(\mathbb R^n)}.$$
%Let $\varepsilon >0$. By assumption, Proposition \ref{uncertainty_principle} yields that there exists $D_{\varepsilon,T} >0$ such that for all $f\in L^2(\mathbb R^n)$ and for all $t\geq T/2$,
%\begin{multline*}
%	\big\Vert e^{-TF(\vert D_x\vert)} f \big\Vert^2_{L^2(\mathbb R^n)}\le\frac 2T \int_{\frac T2}^T \big\Vert e^{-tF(\vert D_x\vert)}f\big\Vert^2_{L^2(\mathbb R^n)}\ \mathrm dt \\
%	\leq D_{\varepsilon,T} \frac2T\int_{\frac T2}^T\big\Vert e^{-tF(\vert D_x\vert)}f\big\Vert^2_{L^2(\omega)}\ \mathrm dt + \varepsilon\Vert f\Vert^2_{L^2(\mathbb R^n)}.
%\end{multline*}
%This ends the proof of Proposition \ref{observability}.
%\end{proof}

\subsection{Cost-uniform approximate null-controllability vs rapid stabilization}\label{cunc_rapid_stab} To end this section, let us quickly check that cost-uniform approximate null-controllability implies rapid stabilization:

\begin{prop}\label{06012021P1} If the control system \eqref{01062020E1} is cost-uniformly approximately null-controllable from $\omega$ at some positive time $T>0$, then  is rapidly stabilizable from $\omega$.
\end{prop}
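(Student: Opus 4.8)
The plan is to convert the cost-uniform approximate null-controllability in time $T$ into a family of observability estimates via Theorem~\ref{21102020T2}, then to \emph{iterate} these estimates over consecutive time windows of length $T$, and finally to read off from the iterated estimates an observability inequality at an arbitrary prescribed rate, which is exactly what the observability characterization of rate-$\alpha$ exponential stabilization requires.

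\textbf{Step 1: observability via Theorem~\ref{21102020T2}, then iteration.} By Theorem~\ref{21102020T2} applied at time $T$, the hypothesis that \eqref{01062020E1} is cost-uniformly approximately null-controllable from $\omega$ in time $T$ is equivalent to: for every $\varepsilon\in(0,1)$ there exists $C_{\varepsilon}>0$ such that
$$\big\Vert e^{-TF(\vert D_x\vert)}g\big\Vert^2_{L^2(\mathbb R^n)}\le C_{\varepsilon}\int_0^T\big\Vert e^{-tF(\vert D_x\vert)}g\big\Vert^2_{L^2(\omega)}\ \mathrm dt+\varepsilon\Vert g\Vert^2_{L^2(\mathbb R^n)},\qquad g\in L^2(\mathbb R^n).$$
Applying this to $e^{-jTF(\vert D_x\vert)}g$ in place of $g$ and using the semigroup property gives, for every integer $j\ge0$,
$$\big\Vert e^{-(j+1)TF(\vert D_x\vert)}g\big\Vert^2_{L^2(\mathbb R^n)}\le C_{\varepsilon}\int_{jT}^{(j+1)T}\big\Vert e^{-tF(\vert D_x\vert)}g\big\Vert^2_{L^2(\omega)}\ \mathrm dt+\varepsilon\big\Vert e^{-jTF(\vert D_x\vert)}g\big\Vert^2_{L^2(\mathbb R^n)}.$$
Writing $a_j=\Vert e^{-jTF(\vert D_x\vert)}g\Vert^2_{L^2(\mathbb R^n)}$, iterating this recursion, and bounding the geometric weights $\varepsilon^{m-1-j}$ by $1$ (since $\varepsilon<1$), one obtains that for every integer $m\ge1$,
$$\big\Vert e^{-mTF(\vert D_x\vert)}g\big\Vert^2_{L^2(\mathbb R^n)}\le C_{\varepsilon}\int_0^{mT}\big\Vert e^{-tF(\vert D_x\vert)}g\big\Vert^2_{L^2(\omega)}\ \mathrm dt+\varepsilon^{m}\Vert g\Vert^2_{L^2(\mathbb R^n)},\qquad g\in L^2(\mathbb R^n).$$

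\textbf{Step 2: choosing the defect according to the target rate, and conclusion.} Fix a rate $\alpha>0$ and pick $\varepsilon\in(0,1)$ with $\varepsilon\le e^{-2\alpha T}$, so that $\varepsilon^{m}\le e^{-2\alpha mT}$ for all $m\ge1$. For $S>0$, writing $m=\lfloor S/T\rfloor$ and $e^{-SF(\vert D_x\vert)}g=e^{-(S-mT)F(\vert D_x\vert)}e^{-mTF(\vert D_x\vert)}g$, and using $\Vert e^{-tF(\vert D_x\vert)}\Vert_{\mathcal L(L^2(\mathbb R^n))}=e^{-t\inf F}\le e^{T\vert\inf F\vert}$ for $t\in[0,T]$ (valid since $F$ is bounded from below), the estimate of Step~1 turns into: there exists $A_{\alpha}\ge1$ such that for every $S>0$ there is $C_{\alpha,S}>0$ with
$$\big\Vert e^{-SF(\vert D_x\vert)}g\big\Vert^2_{L^2(\mathbb R^n)}\le C_{\alpha,S}\int_0^{S}\big\Vert e^{-tF(\vert D_x\vert)}g\big\Vert^2_{L^2(\omega)}\ \mathrm dt+A_{\alpha}e^{-2\alpha S}\Vert g\Vert^2_{L^2(\mathbb R^n)},\qquad g\in L^2(\mathbb R^n).$$
This being valid for every $\alpha>0$, it is exactly the observability condition entering the characterization of complete (i.e. rapid) stabilization established in \cite{LWXY} (the family of converse statements to Proposition~\ref{18122020T2}, contained in the proof of \cite{LWXY}, Theorem~1.1); it therefore ensures that \eqref{01062020E1} is exponentially stabilizable from $\omega$ at every rate $\alpha>0$, that is, rapidly stabilizable from $\omega$.

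\textbf{Main obstacle.} The delicate point is that one cannot reach arbitrarily large rates merely by sending the defect $\varepsilon$ in Theorem~\ref{21102020T2} to $0$ at fixed time $T$, since the associated cost $C_{\varepsilon}$ then degenerates; the iteration step is what circumvents this, trading a single cost-uniform time $T$ for estimates with a geometrically decaying defect $\varepsilon^{m}$ at the linearly growing times $mT$ while keeping the observation constant fixed, so that the attainable rate is $\simeq-\tfrac1{2T}\log\varepsilon$ and hence unbounded as $\varepsilon\to0$. The second point requiring care is the invocation of the \emph{sufficiency} of the rate-$\alpha$ observability inequality for rate-$\alpha$ stabilization; should one wish to keep the argument self-contained, this last implication can instead be carried out by the Lyapunov/feedback construction of \cite{MR4041279} (in the spirit of the proof of Theorem~\ref{11112020T1}$(ii)$), fed with the observability estimate obtained above.
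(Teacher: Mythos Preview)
Your argument is correct, but the paper's proof is considerably shorter and avoids both the iteration step and the appeal to the full \cite{LWXY} characterization. The paper proceeds as follows: given the observability estimate of Theorem~\ref{21102020T2} at the fixed time $T$, fix $\mu>0$, multiply the estimate by $e^{2T\mu}$, and rewrite it in terms of the shifted symbol $F_\mu=F-\mu$ (using $e^{-tF_\mu(\vert D_x\vert)}=e^{t\mu}e^{-tF(\vert D_x\vert)}$). This produces an observability estimate for the system $(E_{F_\mu})$ at time $T$ with defect $\varepsilon e^{2T\mu}$; taking $\varepsilon=\tfrac12 e^{-2T\mu}$ makes this defect $<1$, so Theorem~\ref{29102020T2}$(iii)$ (already stated in the paper) gives stabilizability of $(E_{F_\mu})$. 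Since stabilizability of $(E_{F_\mu})$ at some positive rate is the same as stabilizability of $(E_F)$ at a rate exceeding $\mu$, and $\mu$ is arbitrary, rapid stabilizability follows.

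What this buys: the paper's route stays entirely within results already stated in the text (Theorems~\ref{21102020T2} and~\ref{29102020T2}), and replaces your time-iteration by a single algebraic shift of the symbol. Your approach, on the other hand, is arguably more ``semigroup-theoretic'' in flavor (nothing in your Steps~1--2 uses the Fourier-multiplier structure), and it yields as a byproduct an observation constant $C_{\alpha,S}$ that is actually \emph{uniform} in $S$. The price you pay is that your concluding Step~6 needs the sufficiency direction of the rate-$\alpha$ observability characterization from \cite{LWXY}, which is not recorded in the paper; the paper's shift trick bypasses this by reducing ``stabilization at rate $\mu$ for $F$'' to ``stabilization at some rate for $F-\mu$'', for which Theorem~\ref{29102020T2} suffices.
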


\begin{proof} Assume that the system \eqref{01062020E1} is cost-uniformly approximately null-controllable from $\omega$ at some positive time $T>0$. According to Theorem~\ref{21102020T2}, the following observability estimate holds: for all $\varepsilon >0$, there exists a positive constant $C_{\varepsilon,T} >0$ such that for all $g\in L^2(\mathbb R^n)$,
\begin{equation}\label{obs_cunc0}
	\big\Vert e^{-TF(\vert D_x\vert)}g\big\Vert^2_{L^2(\mathbb R^n)}\le C_{\varepsilon, T}\int_0^T\big\Vert e^{-tF(\vert D_x\vert)}g\big\Vert^2_{L^2(\omega)}\ \mathrm dt + \varepsilon\Vert g\Vert^2_{L^2(\mathbb R^n)}.
\end{equation}
In order to prove that \eqref{01062020E1} is rapidly stabilizable, it is sufficient to show that for all $\mu >0$, the system (\hyperref[01062020E1]{$E_{F_{\mu}}$}) is stabilizable, where $F_{\mu} = F- \mu$. To that end, we shall apply Theorem~\ref{29102020T2}, already used in Section~\ref{necessary_part}. From \eqref{obs_cunc0}, by multiplying by $e^{2T\mu}$, we get that for all $\varepsilon >0$, there exists a positive constant $C_{\varepsilon,T} >0$ such that for all $g\in L^2(\mathbb R^n)$,
$$\big\Vert e^{-TF_{\mu}(\vert D_x\vert)}g\big\Vert^2_{L^2(\mathbb R^n)}\le C_{\varepsilon, T}\int_0^T\big\Vert e^{-tF_{\mu}(\vert D_x\vert)}g\big\Vert^2_{L^2(\omega)}\ \mathrm dt + \varepsilon e^{2T\mu}\Vert g\Vert^2_{L^2(\mathbb R^n)}.$$
By taking $\varepsilon = \frac{e^{-2T\mu}}2$ in the above estimate, it follows from the characterization $(iii)$ of Theorem~\ref{29102020T2} that the system (\hyperref[01062020E1]{$E_{F_{\mu}}$}) is stabilizable from $\omega$. Since $\mu>0$ can be chosen arbitrary large, we deduce that the system \eqref{01062020E1} is rapidly stabilizable from $\omega$.
\end{proof}

\section{Appendix}\label{appendix}

This appendix is devoted to the proof of Proposition~\ref{18122020T2}. The following arguments are due to H. Liu, G. Wang, Y. Xu and H. Yu and are originally presented in \cite{LWXY} (Section 2). Let $F:[0,+\infty)\rightarrow\mathbb R$ be a continuous function bounded from below and $\omega\subset\mathbb R^n$ be a Borel set. Assume that the control system \eqref{01062020E1} is stabilizable from $\omega$ at rate $\alpha >0$. We aim at proving that there exists a positive constant $A_{\alpha}>0$ such that for all $T>0$, there exists a positive constant $C_{\alpha, T} >0$ satisfying that for all $g\in L^2(\mathbb R^n)$,
$$\big\Vert e^{-TF(\vert D_x\vert)}g\big\Vert^2_{L^2(\mathbb R^n)}\le C_{\alpha,T} \int_0^T\big\Vert e^{-tF(\vert D_x\vert)}g\big\Vert^2_{L^2(\omega)}\ \mathrm dt 
+ A_{\alpha} e^{-2\alpha T}\Vert g\Vert^2_{L^2(\mathbb R^n)}.$$
By definition of stabilization, there exist a positive constant $M_{\alpha}\geq1$ and a linear feedback $K_{\alpha}\in\mathcal L\big(L^2(\mathbb R^n)\big)$ such that
%the operator $F(\vert D_x\vert)+\mathbbm1_{\omega}K_{\alpha}$ generates a strongly continuous semigroup $(e^{-t(F(\vert D_x\vert)+\mathbbm1_{\omega}K_{\alpha})})_{t \geq 0}$ on $L^2(\mathbb R^n)$ satisfying
\begin{equation}\label{27012021E2}
	\forall t\geq0,\quad \big\Vert e^{-t(F(\vert D_x\vert)+\mathbbm 1_{\omega}K_{\alpha})}\big\Vert_{\mathcal L(L^2(\mathbb R^n))}\le M_{\alpha}e^{-\alpha t}.
\end{equation}
Let $T>0$ fixed. First notice from Duhamel's-type formula for bounded perturbations of semigroups, see e.g. \cite{MR1721989} (Corollary III.1.7), that for all $f\in L^2(\mathbb R^n)$,
%\begin{equation}\label{duhamel}
$$e^{-T(F(\vert D_x\vert)+\mathbbm 1_{\omega}K_{\alpha})}f = e^{-TF(\vert D_x\vert)}f - \int_0^Te^{-(T-t)F(\vert D_x\vert)} \mathbbm1_{\omega}K_{\alpha}e^{-t (F(|D_x|)+\mathbbm1_{\omega}K_{\alpha})}f\ \mathrm dt.$$
%\end{equation}
%since the function $f:= e^{-t (F(\vert D_x\vert)+\mathbbm 1_{\omega}K)} g$ is solution of the following Cauchy problem
%$$\left\{\begin{aligned}
%	& \partial_tf(t,x) + F(\vert D_x\vert) f(t,x) = -\mathbbm 1_{\omega}(x) (K f)(t,x),\quad t>0,\ x\in\mathbb R^n, \\
%	& f(0,\cdot) = f_0\in L^2(\mathbb R^n).
%\end{aligned}\right.$$
By using the fact that the evolution operators $e^{-tF(\vert D_x\vert)}$ are selfadjoint on $L^2(\mathbb R^n)$, the above formula and \eqref{27012021E2} imply that for all $f,g\in L^2(\mathbb R^n)$,
\begin{align*}
	&\ \big\vert\big\langle e^{-TF(\vert D_x\vert)}g,f\big\rangle_{L^2(\mathbb R^n)}\big\vert = \big\vert\big\langle g, e^{-T F(|D_x|)} f\big\rangle_{L^2(\mathbb R^n)}\big\vert \\
	\le &\ \big\vert\big\langle g, e^{-T (F(|D_x|)+\mathbbm1_{\omega}K_{\alpha})} f\big\rangle_{L^2(\mathbb R^n)}\big\vert + \bigg\vert\int_0^T \big\langle g,  e^{-(T-t) F(|D_x|)} \mathbbm1_{\omega} K_{\alpha} e^{-t (F(|D_x|)+\mathbbm1_{\omega}K_{\alpha})} f \big\rangle_{L^2(\mathbb R^n)}\ \mathrm dt\bigg\vert \\
	\le &\ \big\vert\big\langle g, e^{-T (F(|D_x|)+\mathbbm1_{\omega}K_{\alpha})} f\big\rangle_{L^2(\mathbb R^n)}\big\vert + \int_0^T \big\vert\big\langle\mathbbm1_{\omega}e^{-(T-t) F(|D_x|)}g,K_{\alpha} e^{-t (F(|D_x|)+\mathbbm1_{\omega}K_{\alpha})}f\big\rangle_{L^2(\mathbb R^n)}\big\vert\ \mathrm dt \\
	\le &\ M_{\alpha}e^{-\alpha T} \Vert f\Vert_{L^2(\mathbb R^n)} \Vert g\Vert_{L^2(\mathbb R^n)} + M_{\alpha}\Vert K_{\alpha}\Vert_{\mathcal L(L^2(\mathbb R^n))} \Vert f\Vert_{L^2(\mathbb R^n)} \int_0^T \big\Vert e^{-t F(\vert D_x\vert)} g \big\Vert_{L^2(\omega)}\ \mathrm dt.
\end{align*}
By applying the above inequality to $f = e^{-TF(\vert D_x\vert)}g$ , it follows that for all $g\in L^2(\mathbb R^n)$,
$$ \big\Vert e^{-TF(\vert D_x\vert)}g\big\Vert_{L^2(\mathbb R^n)} \le M_{\alpha}\Vert K_{\alpha}\Vert_{\mathcal L(L^2(\mathbb R^n))} \int_0^T\big\Vert e^{-tF(\vert D_x\vert)}g\big\Vert_{L^2(\omega)}\ \mathrm dt + M_{\alpha}e^{-\alpha T} \Vert g\Vert_{L^2(\mathbb R^n)}.$$
Finally, we deduce from H\"older's inequality and the classical convexity inequality 
$$\forall a,b>0,\quad (a+b)^2\le2(a^2+b^2),$$
that for all $g\in L^2(\mathbb R^n)$,
$$\big\Vert e^{-TF(\vert D_x\vert)}g\big\Vert^2_{L^2(\mathbb R^n)} \le 2M^2_{\alpha}\Vert K_{\alpha}\Vert^2_{\mathcal L(L^2(\mathbb R^n))} T\int_0^T\big\Vert e^{-tF(\vert D_x\vert)}g\big\Vert^2_{L^2(\omega)}\ \mathrm dt + 2M^2_{\alpha}e^{-2\alpha T}\Vert g\Vert^2_{L^2(\mathbb R^n)},$$
and this concludes the proof of Proposition~\ref{18122020T2}.

\end{document}